\Crefname{paragraph}{Section}{Sections}
\numberwithin{equation}{section}
\crefname{lem}{lemma}{lemmas}
\crefname{theo}{theorem}{theorems}
\providecommand{\keywords}[1]{\noindent {\textit{Keywords:}} #1}
\newcommand{\R}{{\mathbb{R}}}
\renewcommand{\O}{{\mathcal{O}}}
\theoremstyle{plain} 
\newtheorem{prop}{Proposition}[section] 
\newtheorem{theo}[prop]{Theorem}
\newtheorem{lem}[prop]{Lemma}
\theoremstyle{definition}
\newtheorem{rmk}[prop]{Remark}
\newcommand{\norme}[1]{\left\lVert#1\right\rVert}
\newcommand{\snorme}[1]{\lVert #1 \rVert}
\newcommand\dt{\triangle t}
\newcommand\sdt{\scriptscriptstyle \triangle t}
\newcommand\mT{\mathcal P}
\newcommand\dmT{\mathcal D}
\newcommand\smT{{\scriptscriptstyle\mT}}
\newcommand\sdmT{{\scriptscriptstyle\dmT}}
\newcommand{\ov}[1]{\overline{#1}}
\newcommand{\difx}{\partial_x}
\newcommand{\sht}{\mathtt t}
\newcommand{\taup}[1]{(\mathtt{t}^+#1)}
\newcommand{\taum}[1]{(\mathtt{t}^-#1)}
\newcommand{\taubp}[1]{(\bar{\mathtt t}^+#1)}
\newcommand{\taubm}[1]{(\bar{\mathtt t}^-#1)}
\newcommand{\tbp}[1]{\bar{\mathtt t}^+#1}
\newcommand{\tbm}[1]{\bar{\mathtt t}^-#1}
\newcommand{\tcp}[1]{{\normalfont\textsf t}^+#1}
\newcommand{\tcm}[1]{{\normalfont\textsf t}^-#1}
\newcommand\Dtbar{\overline{{D}}_t}
\newcommand\Dt{D_t}
\newcommand\Dtc{\normalfont\textsf{D}_t}
\newcommand\ddbint{\,\dint\!\!\!\int}
\newcommand\dbint{\int\!\!\!\int}
\newcommand\D{\displaystyle}
\def\dt{{\triangle t}}
\newcommand{\inlineitem}[1][]{%
\ifnum\enit@type=\tw@
    {\descriptionlabel{#1}}
  \hspace{\labelsep}%
\else
  \ifnum\enit@type=\z@
       \refstepcounter{\@listctr}\fi
    \quad\@itemlabel\hspace{\labelsep}%
\fi}
\def\Yint#1{\mathchoice
    {\YYint\displaystyle\textstyle{#1}}%
    {\YYint\textstyle\scriptstyle{#1}}%
    {\YYint\scriptstyle\scriptscriptstyle{#1}}%
    {\YYint\scriptscriptstyle\scriptscriptstyle{#1}}%
      \!\int}
\def\YYint#1#2#3{{\setbox0=\hbox{$#1{#2#3}{\int}$}
    \vcenter{\hbox{$#2#3$}}\kern-.52\wd0}}
\def\dint{\Yint{\textrm{---}}}
\newcommand\inter[1]{\llbracket #1\rrbracket}
 \pgfplotsset{surface/.style={ %
               xmax=1.1,%
               axis z line=center,%
               axis x line=center,%
               axis y line=center,%
               zmin=0,%
               clip=false,%
               extra x ticks={1},%
               extra x tick label={$T=1$},%
               xtick={0},%
               ytick=\empty}}
 \pgfplotsset{erreurs/.style={scale=1,
     legend cell align=left,
     legend pos=outer north east,
     legend plot pos=right,
     legend style={cells={anchor=east},draw=none},
     xlabel=$h$,
    xmin=0.001,xmax=0.03}}
\tikzset{pente/.style={opacity=0.6}}
\pgfplotsset{cout/.style={black,mark=diamond*,mark size=2.5,mark options={fill=gray}}}
\pgfplotsset{cible/.style={black,mark=square*,mark size=2.5,mark options={fill=gray}}}
\pgfplotsset{cibleyT/.style={black,mark=*,mark size=2.5,mark options={fill=gray}}}
\pgfplotsset{CG/.style={black,mark=otimes*,mark size=2.5,mark options={fill=gray}}}
\pgfplotsset{solex/.style={black,mark=*,mark size=2.5,mark options={fill=gray}}}
\pgfplotsset{energie/.style={black,mark=triangle*,mark size=2.5,mark options={fill=gray}}}
\tikzset{cross/.style={cross out, draw=blue, fill=none, minimum size=2*(#1-\pgflinewidth), inner sep=0pt, outer sep=0pt}, cross/.default={4pt}}
\let\original@addcontentsline\addcontentsline
\newcommand{\dummy@addcontentsline}[3]{}
\newcommand{\DeactivateToc}{\let\addcontentsline\dummy@addcontentsline}
\newcommand{\ActivateToc}{\let\addcontentsline\original@addcontentsline}
\begin{document}

\title{\bf Controllability of a simplified time-discrete stabilized Kuramoto-Sivashinsky system}

\author{V\'ictor Hern\'andez-Santamar\'ia\thanks{Instituto de Matem\'aticas, Universidad Nacional Aut\'onoma de M\'exico, Circuito Exterior, C.U., C.P. 04510 CDMX, Mexico. E-mail: \texttt{victor.santamaria@im.unam.mx}}} 
\maketitle

\begin{abstract}
In this paper, we study some controllability and observability properties for a coupled system of time-discrete fourth- and second-order parabolic equations. This system can be regarded as a simplification of the well-known stabilized Kumamoto-Sivashinsky equation. Unlike the continuous case, we can prove only a relaxed observability inequality which yields a $\phi(\dt)$-controllability result. This result tells that we cannot reach exactly zero but rather a small target whose size goes to 0 as the discretization parameter $\dt$ goes to 0. The proof relies on a known Carleman estimate for second-order time-discrete parabolic operators and a new Carleman estimate for the time-discrete fourth-order equation.
\end{abstract}
\keywords{Time discrete parabolic equations, stabilized Kuramoto-Sivashinsky, Carleman estimates, relaxed observability inequalities, $\phi(\dt)$-null controllability.}


\section{Introduction}

The stabilized Kuramoto-Sivashinsky (SKS) equation was proposed in \cite{FMK03} as a model of front propagation in reaction-diffusion phenomena and combines dissipative features with dispersive ones. This systems consists of a Kuramoto-Sivashinsky-KdV (KS-KdV) equation linearly coupled to an extra dissipative equation. This model takes the form
\begin{equation}\label{ks_intro}
\begin{cases}
u_t-\Gamma u_{xx}+u_{x}=v_{x}, \\
v_t+\gamma v_{xxxx}+v_{xxx}+av_{xx}+uu_{x}=u_x, 
\end{cases}
\end{equation}
where the coefficients $\gamma,a>0$ take into account the long-wave instability and the short wave dissipation, respectively, $\Gamma>0$ is the dissipative parameter and $c\in\mathbb R\setminus\{0\}$ is the group velocity mismatch between wave modes.  This model applies to the description of surface waves on multilayered liquid films and serves
as a one-dimensional model for turbulence and wave propagation, see \cite{FMK03} for a more detailed
discussion.

System \eqref{ks_intro} has been studied from the controllability point of view in the recent past. In \cite{CMP12}, the authors addressed for the first time the null-controllability of the SKS model when the control action is applied at the boundary of both equations. Later, in \cite{CMP15}, it has been proved the null-controllability of the coupled system with a single control applied at the interior of the fourth-order equation, while in \cite{CC16} the analogous result is proved by controlling from the heat equation. Stochastic versions for some of these results can be found in \cite{HSP20} and some boundary control problems for simplified systems have been studied in \cite{CCM19} and \cite{HMV20}.

In this paper, our main interest is to study some controllability and observability properties for time-discrete approximations of a simplified SKS model. As it is well-known, controllability/observability and numerical discretization do not commute well (see e.g. \cite{Zua05}) but at least we expect to retain some properties.

\subsection{Problem formulation and main result}

In this paper, we shall use the notation $\inter{a,b}=[a,b]\cap \mathbb N$ for any real numbers $a<b$.

We are interested in studying the controllability of a simplified time-discrete version of \eqref{ks_intro}. To this end, let $M\in\mathbb N^*$ and set $\dt=T/M$. We introduce the following uniform discretization of the time variable
\begin{equation}\label{eq:discr_points}
0=t_0<t_1<\ldots<t_M=T,
\end{equation}
with $t_n=n\dt$ and $n\in\inter{0,M}$. We also introduce the midpoints $t_{n+\frac12}=(t_{n+1}+t_n)/2$ for $n\in\inter{0,M-1}$, see \Cref{fig:discr_T}.

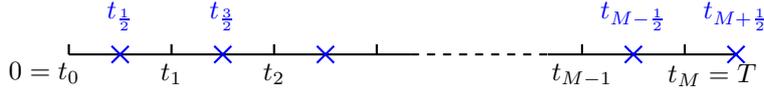
\begin{figure}[t]
\raggedright\hspace{1.75 cm}
\begin{tikzpicture}[scale=0.9]
\draw[-,thick] (-4,0) -- (1,0);
\draw[dashed,thick] (1, 0) -- (3, 0);
\draw[-,thick] (3,0) -- (5.75,0);
\draw [thick] (-4,0) -- (-4,0.15);
\draw[thick,color=black] (-4.36,0.025) node[below]{$0=t_0$} ;
\draw [thick] (-2.5,0) node[below]{$t_1$} -- (-2.5,0.15);
\draw [thick] (-1.0,0) node[below]{$t_2$} -- (-1.0,0.15);
\draw [thick] (0.5,0) node[below]{} -- (0.5,0.15);
\draw [thick] (3.5,0) node[below]{$t_{M-1}$} -- (3.5,0.15);
\draw [thick] (5,0) -- (5,0.15);
\draw[thick,color=black] (5.4,0.005) node[below]{$t_M=T$} ;
\draw[thick] (-3.25,0) node[cross]{} -- (-3.25,0);
\draw[thick,color=blue] (-3.25,0.2) node[above]{$t_{\frac{1}{2}}$} ;
\draw[thick] (-1.75,0) node[cross]{} -- (-1.75,0);
\draw[thick,color=blue] (-1.75,0.2) node[above]{$t_{\frac{3}{2}}$} ;
\draw[thick] (-0.25,0) node[cross]{} -- (-0.25,0);
\draw[thick,color=blue] (4.25,0.2) node[above]{$t_{M-\frac{1}{2}}$} ;
\draw[thick] (4.25,0) node[cross]{} -- (4.25,0);
\draw[thick,color=blue] (5.75,0.2) node[above]{$t_{M+\frac{1}{2}}$} ;
\draw[thick] (5.75,0) node[cross]{} -- (5.75,0);
\put(175,10){$\overline{\mT}=(t_n)_{n\in\inter{0,M}}$}
\put(175,-10){$\textcolor{blue}{\overline{\dmT}=(t_{n+\frac{1}{2}})_{n\in \inter{0, M}}}$}
\end{tikzpicture}
\caption{Discretization of the time variable and its notation.}
\label{fig:discr_T}
\end{figure}

For any time-discrete control sequence $h=\{h^{n+\frac12}\}_{n\in\inter{0,M-1}}\subset L^2(\Omega)$, consider the sequence $(u,v)=\{u^n,v^n\}_{n\in\inter{0,M}}\subset L^2(\Omega)$ verifying the recursive formula 
\begin{equation}\label{eq:ks_heat}
\begin{cases}
\D \frac{u^{n+1}-u^{n}}{\dt}-\Gamma \difx^2 u^{n+1}+c\difx u^{n+1}={v^{n+1}} &n\in\inter{0,M-1}, \\
\D \frac{v^{n+1}-v^{n}}{\dt}+\gamma \difx^4 v^{n+1}+\difx^3 v^{n+1}+a\difx^2 v^{n+1}={ u^{n+1}}+\chi_{\omega} h^{n+\frac12} &n\in\inter{0,M-1}, \\
\left(v_{|\partial\Omega}\right)^{n+1}=\left(v_{|\partial\Omega}\right)^{n+1}=\left(\difx v_{|\partial\Omega}\right)^{n+1} =0 &n\in\inter{0,M-1}, \\
u^0=u_0, \quad v^0=v_0,
\end{cases}
\end{equation}
where $(u^n,v^n)$ (resp. $h^{n+\frac12}$) denotes an approximation of $(u,v)$ (resp. $h$) at time $t_n$ (resp. $t_{n+\frac12}$). In fact, \eqref{eq:ks_heat} is an implicit Euler discretization of a simplified model for \eqref{ks_intro}. The main simplifications are that system \eqref{eq:ks_heat} is linear and that the couplings on the right-hand side are through zero order couplings (see \Cref{sec:conc} for further remarks about this).  

As in the continuous case, we can state a notion of null-controllability, that is, system \eqref{eq:ks_heat} is said to be null-controllable for any $(u_0,v_0)\in[L^2(\Omega)]^2$ if there exists a sequence $h=\{h^{n+\frac12}\}_{n\in\inter{0,M-1}}$ such that the corresponding satisfies
\begin{equation}\label{eq:uM_vM_0}
u^{M}=v^{M}=0.
\end{equation}
However, it turns out that \eqref{eq:uM_vM_0} is a very strong condition and in general cannot hold. In fact, as pointed out in \cite{zhe08}, even for the simple case of the heat equation, time-discretization does not preserve null-controllability and not even approximate one. 

For this reason, in this paper we shall pursue a different notion that captures some controllability properties and allows us to recover the known results in the continuous case. Our main result is the following.
 
\begin{theo}\label{thm:main_1}
Let $T\in(0,1)$ and a discretization parameter $\dt>0$ small enough. Then, for any $(u_0,v_0)\in [L^2(\Omega)]^2$ and any function $\phi$ verifying 
\begin{equation}\label{eq:cond_dt}
\liminf_{\dt\to 0} \frac{\phi(\dt)}{e^{-C_1/(\dt)^{1/10}}}>0,
\end{equation}
for some $C_1>0$ uniform with respect to $\dt$, there exists a time discrete control $v$ such that 
\begin{equation}\label{eq:cost_control_full}
\left(\sum_{n=0}^{M-1}\dt \int_{\omega} |v^{n+\frac12}|^2\right)^{1/2} \leq C \norme{(u_0,y_0)}_{[L^2(\Omega)]^2}
\end{equation}
and such that the associated solution $(u,v)$ to \eqref{eq:ks_heat} verifies
\begin{equation}\label{eq:phi_dt_contr}
\snorme{(u^{M},v^{M})}_{[L^2(\Omega)]^2}\leq C\sqrt{\phi(\dt)}\norme{(u_0,y_0)}_{[L^2(\Omega)]^2},
\end{equation}
where $C>0$ only depends on $\phi$, $T$, $\Omega$, $\omega$, $\Gamma$, $\gamma$, $a$, and $c$.
\end{theo}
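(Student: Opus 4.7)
The plan is to reduce \Cref{thm:main_1} to a \emph{relaxed observability inequality} for the backward adjoint system and then to construct the control by a penalized HUM (Hilbert Uniqueness Method) duality argument. Writing the discrete adjoint of \eqref{eq:ks_heat}, with unknowns $(\varphi^n,\psi^n)_{n\in\inter{0,M}}$ and terminal data $(\varphi^M,\psi^M)$, yields a coupled backward system: a backward heat-like equation in $\varphi$ with source $\psi$, and a backward fourth-order equation in $\psi$ with source $\varphi$. By Fenchel--Rockafellar duality on the penalized functional
\[
J_{\phi(\dt)}(\varphi^M,\psi^M)=\tfrac12\sum_{n=0}^{M-1}\dt\int_\omega|\psi^{n+\frac12}|^2+\tfrac{\phi(\dt)}{2}\snorme{(\varphi^M,\psi^M)}_{[L^2(\Omega)]^2}^2+\langle(u_0,v_0),(\varphi^0,\psi^0)\rangle,
\]
the bounds \eqref{eq:cost_control_full}--\eqref{eq:phi_dt_contr} follow (reading $h^{n+\frac12}$ off the minimizer) once one establishes the $\dt$-uniform estimate
\begin{equation}\label{eq:plan_obs}
\snorme{(\varphi^0,\psi^0)}_{[L^2(\Omega)]^2}^2\leq C\sum_{n=0}^{M-1}\dt\int_\omega|\psi^{n+\frac12}|^2+C\phi(\dt)\snorme{(\varphi^M,\psi^M)}_{[L^2(\Omega)]^2}^2.
\end{equation}

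To prove \eqref{eq:plan_obs} I would apply, with a \emph{common} Carleman weight $e^{-s\alpha}$ of Fursikov--Imanuvilov type (modified so that $\alpha$ does not blow up at the endpoints $t=0,T$, as is standard in the time-discrete setting), two global Carleman estimates simultaneously: the known time-discrete second-order parabolic Carleman estimate applied to $\varphi$, and the new time-discrete fourth-order Carleman estimate (to be proved in this paper) applied to $\psi$. In both, the lower-order terms $c\difx\varphi$ and $\difx^3\psi+a\difx^2\psi$ are absorbed as perturbations by taking the Carleman parameters $s,\lambda$ large. Both estimates necessarily carry an unremovable boundary-in-time remainder $\lesssim e^{-C/(\dt)^{1/10}}\snorme{(\varphi^M,\psi^M)}^2$ coming from the discrete integration by parts in time, and it is this remainder that is responsible for the threshold \eqref{eq:cond_dt}.

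Adding the two estimates produces zero-order cross terms (a $\psi$-term in the estimate for $\varphi$, and vice versa), absorbed by the dominant weighted left-hand sides for $s$ large. The second delicate issue is that the sum provides local observations on both $\varphi$ and $\psi$ on some $\omega'\Subset\omega$, whereas \eqref{eq:plan_obs} allows only a $\psi$-observation. I would eliminate the $\varphi$-local term using the coupling $\frac{\varphi^{n+1}-\varphi^n}{\dt}-\Gamma\difx^2\varphi^{n+1}+c\difx\varphi^{n+1}=\psi^{n+1}$: multiplying by a cutoff times $\varphi^{n+1}$, discretely integrating by parts in $n$ and $x$, and using Young's inequality, the $\varphi$-local term is bounded by a $\psi$-local term on $\omega$ plus higher-weighted quantities already controlled by the Carleman left-hand side, at the cost of one additional boundary-in-time contribution absorbed in the $\phi(\dt)$ term. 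A standard discrete energy estimate on $\inter{0,\lfloor M/4\rfloor}$ then transfers the resulting weighted interior-time norm down to $\snorme{(\varphi^0,\psi^0)}^2$, closing \eqref{eq:plan_obs} whenever $\dt$ is small enough so that $e^{-C/(\dt)^{1/10}}\leq C\phi(\dt)$, which is exactly assumption \eqref{eq:cond_dt}.

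The main obstacle, and where essentially all new work is required, is the derivation of the time-discrete fourth-order Carleman estimate. Following the continuous strategy (splitting the conjugated operator $e^{-s\alpha}(\dt^{-1}(I-\tau^-)+\gamma\difx^4)e^{s\alpha}$ into self-adjoint and skew-adjoint parts, and computing their $L^2$ cross product) creates a large number of commutator terms scaling with negative powers of $\dt$; controlling them forces the compatibility condition $s(\dt)^{1/2}\lesssim 1$ (and a similar one in $\lambda$), whose optimization produces precisely the exponent $1/10$ appearing in \eqref{eq:cond_dt}. Choosing the weight $\alpha$, the boundary conditions on $\psi$, and the discrete integration-by-parts formulas so that these conditions match the exponent provided by the second-order estimate is the technically demanding step on which the whole argument hinges.
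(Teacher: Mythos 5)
Your overall strategy (relaxed observability via simultaneous second- and fourth-order time-discrete Carleman estimates with a common non-degenerate weight, elimination of one local term through the coupling, then penalized HUM) is the same as the paper's, but there is a genuine gap in the form of the observability inequality you propose, and consequently in the duality step. The boundary-in-time remainders produced by the discrete integrations by parts are \emph{not} of the form $e^{-C/(\dt)^{1/10}}\snorme{(p_T,q_T)}^2_{[L^2(\Omega)]^2}$: they necessarily contain the terms $\int_0^1 |(e^{s\varphi}\difx p)^{M+\frac12}|^2$ and $\int_0^1 |(e^{s\varphi}\difx^2 q)^{M+\frac12}|^2$ (the quantities $\mathcal W_{H}$ and $\mathcal W_{KS}$ in the paper), i.e.\ first and second spatial derivatives of the terminal data, which cannot be bounded by its $L^2$ norm. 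The relaxed observability inequality one actually obtains (\Cref{eq:prop_obs}) therefore carries $\snorme{\difx p_T}^2_{L^2(\Omega)}+\snorme{\difx^2 q_T}^2_{L^2(\Omega)}$ on the right-hand side. Your proposed inequality with an $L^2$ remainder in the terminal data (your $(\varphi^M,\psi^M)$) is not available from these estimates, so the $L^2$-penalized functional you write down is not coercive and the direct duality argument does not close.

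The paper circumvents this by penalizing in $H_0^1(\Omega)\times H_0^2(\Omega)$, which through the Euler--Lagrange equation yields $(u^M,v^M)=-\phi(\dt)\left((-\difx^2)\widehat{p_T},\difx^4\widehat{q_T}\right)$ and hence only $H^{-1}\times H^{-2}$ smallness of the final state (\Cref{prop:control_hms}); the $L^2$ statement is then recovered by controlling on a subinterval $(0,T_0)$ with the modified rate $\widetilde\phi(\dt)=\dt\,\phi(\dt)$, switching the control off, and using the regularizing effect of one implicit Euler step, $\sqrt{\dt}\,\snorme{(u^{M_0+1},v^{M_0+1})}_{H_0^1\times H_0^2}\leq C\snorme{(u^{M_0},v^{M_0})}_{H^{-1}\times H^{-2}}$, to pay back the factor $(\dt)^{-1/2}$. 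Your proposal is missing this two-stage structure entirely. A secondary point: to eliminate the local term in the heat adjoint variable you must use the \emph{fourth-order} adjoint equation, in which that variable appears as the source ($\taubm{p}=-\Dtbar q+\gamma\difx^4\taubm{q}-\cdots$), not the heat coupling you cite; multiplying the heat equation by a cutoff times its own unknown only produces the cross term $\int\eta\,\varphi\psi$ together with localized energy terms of $\varphi$, and does not bound the local $L^2$ term of $\varphi$ by local quantities in $\psi$.
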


Condition \eqref{eq:phi_dt_contr} is the so-called $\phi(\dt)$-controllability. This definition, introduced in \cite{BHLR10ar} (see also \cite{Boy13,BLR14}) for studying semi-discrete (i.e., in space) approximations of parabolic control problems, tell us that we do not steer the solution exactly zero but rather we reach a small target whose size goes to zero at the rate $\sqrt{\phi(\dt)}$. In fact, from \eqref{eq:cost_control_full} and \eqref{eq:phi_dt_contr}, we can recover (up to a subsequence) the expected result in the continuous case. 

The strategy we shall employ for controlling \eqref{eq:ks_heat} relies on time-discrete Carleman estimates and some well-known methods adapted to the discrete case. In particular, we follow the strategy outlined in \cite{CMP15} (see also \cite{GBdT10}) in which suitable Carleman estimates with the same weight are applied to each equation and then, by local energy estimates, one of the observations is removed. 

We shall mention that the time-discrete Carleman estimate for the second-order equation has been recently obtained in \cite{BHS20}, but the time-discrete Carleman estimate for the fourth-order operator was not available in the literature. In this regard, our second main contribution is \Cref{thm:time_discrete_fourth} which states precisely this estimate. Looking at such result, one can readily see the classical structure of the Carleman estimate for fourth-order parabolic equations (see \cite{Zho12,CMP15} for the 1-d case and \cite{GK19} for the multi-d one) but, as for the time-discrete heat equation (see \Cref{lem:carleman_heat}), several remainders appear. This is exactly why we can prove only \eqref{eq:phi_dt_contr}.

We would like to finish this introduction by mentioning that there are other approaches for controlling time-discrete systems, see for instance, \cite{zhe08,EZZ08,ZZZ09,EV10,BHLR11}, but they mainly rely on spectral techniques. Although some of these techniques have been used for controlling coupled systems of fourth- and second-order parabolic equations  at the continuous level (see e.g. \cite{Liu14,HSP20}), their application for controlling time-discrete coupled systems is yet to be studied. 

\subsection{Outline}

The rest of the paper is organized as follows. In \Cref{sec:not_time}, we provide some useful notations and definitions that simplify greatly the presentation. With these, our results and proofs will resemble closely those at the continuous level. \Cref{sec:relaxed_obs} is devoted to prove a Carleman estimate for the adjoint system \eqref{eq:adj_ks_heat} with only one observation which in turn yields a \textit{relaxed} observability inequality (see eq. \eqref{eq:obs_final}). This result is then employed in \Cref{sec:phi_dt_contr} to obtain the proof of \Cref{thm:main_1}. We present some final comments in \Cref{sec:conc}

\section{Some useful notation for the time-discrete scheme}\label{sec:not_time}

We devote this section to introduce some notations and definitions that allows us to use a formalism as close as possible to the continuous case. In this way, our results and computations can be followed and carried out in a very intuitive manner. 

From the discretization points \eqref{eq:discr_points}, we denote by $\mT:=\left\{t_n : n\in\inter{1,M}\right\}$ the (primal) set of points excluding the first one and we write $\ov{\mT}:=\mT\cup\{t_0\}$. To handle the approximation of time derivatives, we will work with the (dual) points $t_{n+\frac{1}{2}}$. Its collection is defined as $\dmT:=\{t_{n+\frac{1}{2}}:n\in\inter{0,M-1}\}$. It will be convenient to consider also an extra point $\{t_{M+\frac{1}{2}}\}$ which lies outside the time interval $[0,T]$ (see \Cref{fig:discr_T}) and to write $\ov{\dmT}:= \mT\cup\{T_{M+\frac{1}{2}}\}$. Observe that both $\mT$ and $\dmT$ have a total number of $M$ points. 

We denote by $\R^{\smT}$ and $\R^{\sdmT}$ the sets of real-valued discrete functions defined on $\mT$ and $\dmT$. If $u^{\smT}\in\R^{\smT}$ (resp. $u^\sdmT\in\R^{\sdmT}$), we denote by $u^n$ (resp. $u^{n+\frac{1}{2}}$) its value corresponding to $t_n$ (resp. $t_{n+\frac{1}{2}}$). For $u\in \R^{\smT}$ we define the time-discrete integral
\begin{equation}\label{eq:int_time_primal}
\int_{0}^{T}u^{\smT}:=\sum_{n=1}^{M}\dt\, u^n,
\end{equation}
and for $u^{\sdmT}\in \R^{\sdmT}$ we define 
\begin{equation}\label{eq:int_time_dual}
\dint_{0}^{T}u^{\sdmT}:=\sum_{n=0}^{M-1}\dt\, u^{n+\frac{1}{2}}.
\end{equation}
\begin{rmk}
To ease the notation and thanks to the introduction of two different integral symbols, in what
follows we shall write $u$ indistinctly to refer to functions $u^\smT$ or $u^{\sdmT}$. 
\end{rmk}

Let $\{X,|\cdot|_{X}\}$ be a real Banach space. We denote by $X^{\smT}$ and $X^{\sdmT}$ the sets of vector-valued functions defined on $\mT$ and $\dmT$, respectively. Using definitions \eqref{eq:int_time_primal} and \eqref{eq:int_time_dual}, we denote by $L_{\smT}^p(0,T;X)$ (resp. $L^p_{\sdmT}(0,T;X)$), $1\leq p<\infty$, the space $X^\smT$ (resp. $X^{\sdmT}$) endowed with the norm
\begin{equation*}
\norme{u}_{L^p_{\smT}(0,T;X)}:=\left(\int_{0}^{T}|u|^p_{X}\right)^{1/p} \quad \left(\textnormal{resp.} \quad \norme{u}_{L^p_{\sdmT}(0,T;X)}:=\left(\dint_{0}^{T}|u|^p_{X}\right)^{1/p}\right).
\end{equation*}
%


In the case where $p=2$ and $X$ is replaced by a Hilbert space $\{H,(\cdot,\cdot)_{H}\}$, $H^{\smT}$ (resp. $H^{\sdmT}$) becomes a Hilbert space for the norm induced by the inner product
\begin{equation}\label{eq:inner_prod}
\int_{0}^{T}\left(u,v\right)_{H}:=\sum_{n=1}^{M}\dt\,(u^n,v^n)_{H}  \quad \left(\text{resp.}\quad \dint_{0}^{T}\left(u,v\right)_{H}:=\sum_{n=0}^{M-1}\dt\,(u^{n+\frac12},v^{n+\frac12})_{H}\right).
\end{equation}
\begin{rmk}
Particularly, if $H=L^2(\Omega)$ we shall use the notation
\begin{equation*}
\dbint_{Q}uv:=\int_{0}^{T}(u,v)_{L^2(\Omega)} \quad \left(\text{resp.} \quad \ddbint_{Q}uv:=\dint_{0}^{T}(u,v)_{L^2(\Omega)}\right).
\end{equation*}
Also, in accordance with the notation used in the continuous case, we denote the space $L^2_{\mT}(0,T;L^2(\Omega))$ as $L^2_{\smT}(Q)$. Similar notation holds for functions defined on the dual grid $\dmT$. 
\end{rmk}

To manipulate time-discrete functions, we define translation operators $\sht^+:X^\smT \to X^{\sdmT}$ and $\sht^-:X^{\overline\smT} \to X^{\sdmT}$ as
\begin{equation*}
\taup{u}^{n+\frac12}:=u^{n+1}, \quad \taum{u}^{n+\frac 12}:=u^n, \quad n\in \inter{0,M-1}.
\end{equation*}
With this, we can define a difference operator $\Dt$ as the map from $X^{\overline\smT}$ into $X^\sdmT$ given by
\begin{equation*}
\begin{split}
&(\Dt u)^{n+\frac12}:= \frac{u^{n+1}-u^n}{\dt}=\left(\frac{1}{\dt}\left(\sht^+ - \sht^-\right)u\right)^{n+\frac12}, \quad n\in \inter{0,M-1}. \\
\end{split}
\end{equation*}

In the same manner, we can define the translation operators  $\bar{\sht}^+:X^{\overline\sdmT} \to X^{\smT}$ and $\bar{\sht}^-:X^{\sdmT} \to X^{\smT}$ as follows
\begin{equation*}
\taubp u^{n}:=u^{n+\frac 12}, \quad \taubm u^{n}=u^{n-\frac 12}, \qquad n\in \inter{1,M}, \\
\end{equation*}
as well as a difference operator $\Dtbar$ (mapping $X^{\overline{\sdmT}}$ into $X^{\smT}$) given by 
\begin{equation*}
(\Dtbar u)^{n}:= \frac{u^{n+\frac 12}-u^{n-\frac12}}{\dt}=\left(\frac{1}{\dt}\left(\bar{\mathtt t}^+-\bar{\mathtt t}^{-}\right)u\right)^{n}, \quad n\in \inter{1,M}.
\end{equation*}

These definitions, together with the integral symbols \eqref{eq:int_time_primal} and \eqref{eq:int_time_dual}, help us to write formulas and systems in a more compact way. For instance, \eqref{eq:cost_control_full} can be rewritten as $\norme{v}_{L^2_{\sdmT}(0,T;L^2(\omega))}\leq C \norme{(u_0,y_0)}_{[L^2(\Omega)]^2}$ while system \eqref{eq:ks_heat} can be written compactly as
\begin{equation*}
\begin{cases}
\D (\Dt u)^{n+\frac12}-\Gamma \difx^2 \taup{u}^{n+\frac12}+c\difx \taup{u}^{n+\frac12}=\taup{v}^{n+\frac12} &n\in\inter{0,M-1}, \\
\D (\Dt v)^{n+\frac12}+\gamma \difx^4 \taup{v}^{n+\frac12}+\difx^3 \taup{v}^{n+\frac12}+a\difx^2 \taup{v}^{n+\frac12}=\taup{u}^{n+\frac12}+\chi_{\omega} \widehat{q}^{\,n+\frac12} &n\in\inter{0,M-1}, \\
u^0=u_0, \quad v^0=v_0,
\end{cases}
\end{equation*}
where we can omit the boundary conditions since no other conditions will be used in this document. 

Also, the above definitions allow us to handle the evaluation of continuous functions on the primal and dual meshes $\mT$ and $\dmT$ and enable to present an integration-by-parts (in time) formula in a very natural way. For instance, in the case of \eqref{eq:inner_prod}, for functions $u\in H^{\ov{\smT}}$ and $v\in H^{\ov{\sdmT}}$we have
\begin{equation*}
\dint_0^{T}(\Dt u,v)_{H}=-(u^0,v^{\frac{1}{2}})_{H}+(u^{M},v^{M+\frac{1}{2}})_{H}-\int_{0}^{T}(\Dtbar v,u)_{H}.
\end{equation*}
A summary with useful formulas and estimates are presented in \Cref{app:discrete_things}.

\section{A relaxed time-discrete observability inequality}\label{sec:relaxed_obs}

As classical in control theory, the controllability of \eqref{eq:ks_heat} can be reduced to study the observability of its adjoint system, which in this case is given by
\begin{equation}\label{eq:adj_ks_heat}
\begin{cases}
\D \frac{p^{n-\frac12}-p^{n+\frac12}}{\dt}-\Gamma \difx^2 p^{n-\frac12}-c\difx p^{n-\frac12}={ q^{n-\frac12}} &n\in\inter{1,M}, \\
\D \frac{q^{n-\frac12}-q^{n+\frac12}}{\dt}+\gamma \difx^4 q^{n-\frac12}-\difx^3 q^{n-\frac12}+a\difx^2 q^{n-\frac12}={ p^{n-\frac12}}&n\in\inter{1,M}, \\
\left(p_{|\partial\Omega}\right)^{n-\frac12} = \left(q_{|\partial\Omega}\right)^{n-\frac12}=\left(\difx q_{|\partial\Omega}\right)^{n-\frac{1}{2}}=0 &n\in\inter{1,M}, \\
p^{M+\frac12}=p_T, \quad q^{M+\frac12}=q_T.
\end{cases}
\end{equation}

Since we are interested in small-time controllability, we shall always consider that $T\in(0,1)$. The main goal of this section is to prove the following.

\begin{prop}\label{eq:prop_obs}
For  and any $\dt>0$ small enough, there exist positive constants $C_T$ and $C_1$ such that for every $(p_T,q_T)\in H_0^1(\Omega)\times H_0^2(\Omega)$, we have
\begin{equation}\label{eq:obs_final}
\snorme{p^{\frac12}}^2_{L^2(\Omega)}+\snorme{q^{\frac12}}^2_{L^2(\Omega)}\leq C_T^2\left(\ddbint_{\omega\times(0,T)}|q|^2+{e^{-\frac{C_1}{(\dt)^{1/10}}}}\left[\snorme{\difx p_T}^2_{L^2(\Omega)}+\snorme{\difx^2 q_T}^2_{L^2(\Omega)}\right]\right),
\end{equation}
where $C_T=e^{C/T}$ and the constants $C$, $C_1$ only depend on $\Omega$, $\omega$, $m$, $\Gamma$, $\gamma$, $a$, and $c$
\end{prop}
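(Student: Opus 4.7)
The plan is to derive a time-discrete Carleman estimate for \eqref{eq:adj_ks_heat} with a single observation of $q$ on $\omega\times(0,T)$, and then combine it with a backward discrete energy estimate to propagate to $t_{\frac{1}{2}}$. The strategy follows the continuous argument of \cite{CMP15,GBdT10}: apply Carleman estimates with a common weight to each equation, absorb the zero-order couplings using the largeness of the Carleman parameter, and eliminate the unwanted local observation on $p$ via a Caccioppoli-type estimate.

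Pick $\omega_0\Subset\omega$ and a Fursikov--Imanuvilov-type weight $\alpha(t,x)$ (singular at $t=0,T$ and vanishing at $\partial\Omega$), built from $\psi\in C^\infty(\overline\Omega)$ with $\difx\psi\neq 0$ outside $\omega_0$ and the extra vanishing conditions of $\difx\psi$ on $\partial\Omega$ required by the fourth-order weight. Set $\theta=e^{s\alpha}$. Applying \Cref{lem:carleman_heat} to the first equation of \eqref{eq:adj_ks_heat} (with source $q$) and \Cref{thm:time_discrete_fourth} to the second equation (with source $p$), and adding the two inequalities, one obtains schematically
\begin{equation*}
\mathcal{I}_2(p)+\mathcal{I}_4(q)\leq C\ddbint_Q \theta^{-2}(|p|^2+|q|^2) + C\ddbint_{\omega_0\times(0,T)}\theta^{-2}s^7\lambda^8\xi^7(|p|^2+|q|^2) + \mathcal{R}(\dt),
\end{equation*}
where $\mathcal{I}_2,\mathcal{I}_4$ denote the Carleman left-hand sides (containing in particular $\ddbint_Q\theta^{-2}s^3\lambda^4\xi^3|p|^2$ and $\ddbint_Q\theta^{-2}s^7\lambda^8\xi^7|q|^2$), and $\mathcal{R}(\dt)$ collects the time-discrete remainder terms, which involve $\snorme{\difx p_T}_{L^2(\Omega)}^2+\snorme{\difx^2 q_T}_{L^2(\Omega)}^2$ multiplied by factors that decay in $\dt$ as long as $s\leq c(\dt)^{-1/10}$. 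For $s\geq s_0$ large, the global source $\theta^{-2}(|p|^2+|q|^2)$ on the right is absorbed into $\mathcal{I}_2+\mathcal{I}_4$.

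To remove the local observation on $p$, I would pick $\omega_0\Subset\omega_1\Subset\omega$ and a cutoff $\chi\in C_c^\infty(\omega_1)$ with $\chi\equiv 1$ on $\omega_0$, then multiply the first equation of \eqref{eq:adj_ks_heat} by $\theta^{-2}s^a\xi^b\chi^2 p$ (with $a,b$ matching the Carleman power on $p$), integrate by parts in space, and perform summation by parts in time using the formulas of \Cref{sec:not_time}. Using the equation itself to rewrite the $\difx^2 p$ contribution, this bounds $\ddbint_{\omega_0\times(0,T)}\theta^{-2}s^a\xi^b|p|^2$ by $C\ddbint_{\omega\times(0,T)}|q|^2+\eps\,\mathcal{I}_2(p)+\mathcal{R}(\dt)$, the last piece gathering the endpoint-in-time contributions produced by the discrete integration by parts. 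After absorbing $\eps\,\mathcal{I}_2(p)$ into the left-hand side of the combined Carleman inequality, fixing $\lambda$ large and taking $s=c_0(\dt)^{-1/10}$, we arrive at
\begin{equation*}
\ddbint_Q \theta^{-2}s^3\lambda^4\xi^3(|p|^2+|q|^2)\leq C\ddbint_{\omega\times(0,T)}|q|^2+Ce^{-C_1/(\dt)^{1/10}}\bigl(\snorme{\difx p_T}_{L^2(\Omega)}^2+\snorme{\difx^2 q_T}_{L^2(\Omega)}^2\bigr).
\end{equation*}
The weight $\theta^{-2}$ is bounded below by a positive constant on the mid-time slab $[T/4,3T/4]$; restricting to this slab produces an $L^2$-estimate of $(p,q)$ at some mid-time level in terms of the right-hand side. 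A standard backward discrete energy estimate for \eqref{eq:adj_ks_heat} (test with $(p,q)$, use Young's inequality on the zero-order couplings, then apply a discrete Gronwall lemma) propagates this bound back to $t_{\frac{1}{2}}$ with a multiplicative factor $e^{C/T}$, producing the $C_T=e^{C/T}$ of \eqref{eq:obs_final}.

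The main obstacle is the Caccioppoli step: it must be performed directly on the discrete grid and must generate endpoint-in-time remainders that are controlled by the same $e^{-C_1/(\dt)^{1/10}}$-factor as the Carleman remainders, which is delicate because the natural test function $\theta^{-2}s^a\xi^b\chi^2 p$ lives on the dual grid while summation by parts swaps primal and dual evaluations. A secondary technical point is that \Cref{thm:time_discrete_fourth} forces the dominant power $s^7\lambda^8\xi^7$ on $q$, so the two Carleman estimates must be balanced with a common weight but different leading exponents; the admissible range $s\leq c(\dt)^{-1/10}$ inherited from the fourth-order discrete estimate is precisely what dictates the exponent $1/10$ in the final inequality.
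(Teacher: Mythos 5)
Your overall architecture (two Carleman estimates with a common weight, elimination of the local $p$-observation via the coupling, then a backward discrete energy/Gronwall step to reach $t_{\frac12}$ with constant $e^{C/T}$) is the right one, but two of your key steps would not close as described. First, the weight: a Fursikov--Imanuvilov weight singular at $t=0,T$ is incompatible with \Cref{lem:carleman_heat} and \Cref{thm:time_discrete_fourth}, which are proved for the $\delta$-regularized weight \eqref{eq:def_theta}; that weight must be defined at the ghost point $t_{M+\frac12}=T+\dt/2$ (where the remainders $\mathcal W_{H},\mathcal W_{KS}$ live) and must have controllable discrete time-increments, neither of which holds for a singular weight. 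More importantly, the factor $e^{-C_1/(\dt)^{1/10}}$ does not come from taking $s\sim c_0(\dt)^{-1/10}$ uniformly: if the Carleman parameter grows like $(\dt)^{-1/10}$, the lower bound of the Carleman left-hand side over the middle time slab degenerates like $e^{-C\tau/T^{2m}}=e^{-C/(\dt)^{1/10}}$ as well, so after dividing through the remainder carries $e^{(C-c)/(\dt)^{1/10}}$ with no control on the sign of $C-c$, and the observation term is multiplied by $e^{+C/(\dt)^{1/10}}$. The proof keeps $\tau\sim\tau_2(T^{2m}+T^{2m-1}+T^{2m-1/3})$ \emph{fixed} and instead shrinks the regularization parameter, $\delta\sim(\dt)^{1/(10m)}$, so that the weight at the time-endpoints, $e^{-c\tau/(\delta^m T^{2m})}=e^{-c'/(\dt)^{1/10}}$, is exponentially smaller than the bulk weight, which stays bounded below independently of $\dt$. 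This separation of scales between endpoint and bulk is the entire mechanism, and it is absent from your parameter choice; it is also where the smallness constraint \eqref{eq:small_final} on $\dt\tau^{10}/(\delta^{10m}T^{20m})$ enters.

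Second, the elimination of the local observation of $p$: testing the heat equation for $p$ with $\chi^2 p$ (times a weight) is a Caccioppoli estimate — it controls $\int\chi^2|\difx p|^2$ by local $L^2$ norms of $p$ and $q$, but it cannot bound $\ddbint_{\omega_0\times(0,T)}(\cdot)|p|^2$ by $\ddbint_{\omega\times(0,T)}|q|^2$ plus an $\eps$-absorbable global term, because the local $L^2$ mass of $p$ reappears on the right with a constant of order one. The step that works uses the \emph{second} adjoint equation, in which $p$ is the source: substitute $\taubm{p}=-\Dtbar q+\gamma\difx^4\taubm{q}-\difx^3\taubm{q}+a\difx^2\taubm{q}$ for one factor of $p$ in the local integral and integrate by parts. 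This produces (i) global terms in $p$ with a small factor $\varrho$, (ii) local observations of $\difx^2\taubm{q}$ and $\difx^3\taubm{q}$, which must then be reduced to $\taubm{q}$ alone by a cascade of local spatial integrations by parts (this is where the power $s^{39}$ of \eqref{eq:car_with_one_obs} comes from), and (iii) genuinely discrete terms generated by $\Dtbar(e^{2s\varphi}s^3)$ and by the mismatch $\taubp{q}=\taubm{q}+\dt\,\Dtbar q$, whose absorption dictates the extra smallness conditions on $\dt$. Finally, note that the endpoint remainders involve $p,q,\difx q,\difx^2 q$ at $t_{M+\frac12}$, and converting them into $\snorme{\difx p_T}_{L^2(\Omega)}$ and $\snorme{\difx^2 q_T}_{L^2(\Omega)}$ requires the additional $H^1\times H^2$ dissipation estimate \eqref{eq:disipation_regular}, a step your sketch does not account for.
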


Estimate \eqref{eq:obs_final} is precisely a \textit{relaxed} observability inequality. It is weaker than a regular observability inequality due to the presence of the extra terms on the right-hand side. The study of relaxed observability estimates for discretized
parabolic equations was initiated in \cite{LT06}. We refer to \cite{Boy13} for a further review and discussion. To obtain \eqref{eq:obs_final}, we will use time-discrete Carleman estimates and well-known methodologies adapted to the discrete setting.

\subsection{Preliminaries on Carleman estimates}

In this part, we recall a known time-discrete Carleman estimate for the heat equation and present a new Carleman estimate for the fourth-order parabolic operator which, as mentioned before, is one of the main contributions of this paper.  

Let $\omega_0\Subset \omega$ and consider a function $\beta\in C^4([0,1])$ satisfying 
\begin{equation}\label{eq:cond_beta1}
\begin{gathered}
\beta(x)>0 \quad\forall x\in(0,1),  \\
\beta(0)=\beta(1)=0,
\end{gathered}
\end{equation}
and
\begin{equation}\label{eq:cond_beta2}
|\beta^\prime(x)|\geq \delta > 0 \quad \forall x\in[0,1]\setminus\omega_0 \ \text{for some } \delta >0.	
\end{equation}
The existence and construction of such function is classical, see, e.g. \cite[Lemma 2.1]{FI96}. We observe that \eqref{eq:cond_beta1} and \eqref{eq:cond_beta2} imply that
\begin{equation}\label{eq:positive_beta}
\beta^\prime(0)>0 \quad\text{and}\quad \beta^\prime(1)<0.
\end{equation}

In the spirit of \cite{Gue07}, for a parameter $\lambda>1$, we define the weight function
\begin{equation}\label{eq:def_varphi}
\varphi_m(x)=e^{\lambda(c_2+\beta(x))}-e^{\lambda c_1},
\end{equation}
where $c_2= k\norme{\beta}_\infty$ and $c_1=e^{\frac{k(m+1)}{m}\norme{\beta}_\infty}$. Here, $k>m>0$ are such that $\varphi_m<0$. We also introduce the weight
\begin{equation}\label{eq:def_theta}
\theta_m(t)=\frac{1}{(t+\delta T)^m(T+\delta T-t)^m},
\end{equation}
where $0<\delta<1/2$. The parameter $\delta$ is introduced to avoid singularities at time $t=0$ and $t=T$. These singularities, which correspond to the case $\delta=0$, are exploited in the continuous case, but are rather difficult to handle at the discrete level (for further discussion, see e.g. \cite[p. 6 and Remark 1.5]{BHS20}).  

To avoid cumbersome notation and since at some point the parameter $m$ will be fixed, we simply write $\varphi$ and $\theta$ instead of $\varphi_m$ and $\theta_m$.

In the remainder of this section, we use the following notation to abridge the estimates
\begin{align*}
\mathcal I_H(p)&=\tau^{-1}\dbint_{Q}\tbm(e^{2\tau\theta\varphi}\theta^{-1})\left(|\Dtbar p|^2+|\partial_x^2\taubm{p}|^2\right)+\tau\dbint_{Q}\tbm(e^{2\tau\theta\varphi}\theta)|\difx\taubm{p}|^2\\
&\quad +\tau^3\dbint_{Q}\tbm(e^{2\tau\theta\varphi}\theta^3)\taubm{p}^2, \\
\mathcal I_{KS}(q)&=\tau^{-1}\dbint_{Q}\tbm(e^{2\tau\theta\varphi}\theta^{-1})\left(|\Dtbar q|^2+|\partial_x^4\taubm{q}|^2\right)+\tau\dbint_{Q}\tbm(e^{2\tau\theta\varphi}\theta)|\difx^3\taubm{q}|^2 \\ \notag
&\quad + \tau^3\dbint_{Q}\tbm(e^{2\tau\theta\varphi}\theta^3)|\difx^2\taubm{q}|^2+\tau^5\dbint_{Q} \tbm(e^{2\tau\theta\varphi}\theta^5)|\difx\taubm{q}|^2  +\tau^7\dbint_{Q} \tbm(e^{2\tau\theta\varphi}\theta^7)\taubm{q}^2,
\end{align*}
and
\begin{align*}
&\mathcal W_{H}(p) = \sum_{n\in\{0,M\}}\int_0^{1} \left|\left(e^{s\varphi} p\right)^{n+\frac12}\right|^2+ \int_{0}^{1} \left|\left(e^{s\varphi}\difx p\right)^{M+\frac12}\right|^2, \\
&\mathcal W_{KS}(q) = \sum_{n\in\{0,M\}}\left(\int_0^{1} \left|\left(e^{s\varphi} q\right)^{n+\frac12}\right|^2 + \int_0^{1} \left|\left(e^{s\varphi} \difx q\right)^{n+\frac12}\right|^2  \right)+ \int_{0}^{1} \left|\left(e^{s\varphi}\difx^2 q\right)^{M+\frac12}\right|^2.
\end{align*}

We state a uniform Carleman estimate for the time-discrete backward second-order parabolic operator defined on the dual grid 
\begin{equation}\label{eq:parab_op}
(L_{\sdmT}p)^n=-(\Dtbar p)^n-\Gamma \difx^2\taubm{p}^n, \quad n\in\inter{1,M}. 
\end{equation}
\begin{lem}\label{lem:carleman_heat}
 Let $m\geq 1$ and let $\varphi$ and $\theta$ be defined as in \eqref{eq:def_varphi} and \eqref{eq:def_theta} with $\beta$ verifying \eqref{eq:cond_beta1}--\eqref{eq:cond_beta2}. For the parameter $\lambda\geq 1$ sufficiently large, there exist $C>0$, $\tau_0\geq 1$, $\epsilon_0>0$, depending on $\omega$, $\lambda$ , $\Gamma$, and $m$, such that
\begin{align}\notag 
  \mathcal I_{H}(p)
&\leq C\left(\dbint_{Q}\tbm(e^{2\tau\theta\varphi})|L_{\sdmT}p|^2+\tau^3\dbint_{\omega\times(0,T)}\tbm(e^{2\tau\theta\varphi}\theta^3)\taubm{p}^2\right) +C(\dt)^{-1}\mathcal W_{H}(p)
\end{align}
for all $\tau\geq \tau_0(T^{2m}+T^{2m-1})$, and for all $\dt>0$ and $0<\delta\leq 1/2$ satisfying the condition
\begin{equation}\label{eq:cond_delta_calor}
\frac{\dt\tau^{4}}{\delta ^{4m} T^{6m}} \leq \epsilon_0, 
\end{equation}
and $p$ is any time-discrete function in $p\in (H^2(0,1)\cap H_0^1(0,1))^{\ov{\sdmT}}$.
\end{lem}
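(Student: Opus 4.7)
The plan is to mirror the Fursikov--Imanuvilov strategy for the continuous backward heat operator, carefully tracking the remainder terms produced by the time-discrete difference operator $\Dtbar$; since this lemma is recalled from \cite{BHS20}, I would follow the same general pattern as in Boyer--Le~Rousseau-type time-discrete estimates. First, I set $z := e^{\tau\theta\varphi} p$ and form the conjugated operator $Pz := e^{\tau\theta\varphi}L_{\sdmT}(e^{-\tau\theta\varphi} z)$. The spatial conjugation $-\Gamma\difx^2(e^{-\tau\theta\varphi}z)$ is classical and produces the usual first-order perturbations with $(\tau\theta\varphi_x)^2$ coefficients. The nontrivial contribution is the time piece: using the Taylor-type identity $e^{\tau\theta(\taubm{\cdot})\varphi}e^{-\tau\theta(\taubp{\cdot})\varphi} = 1 - \dt\,\Dtbar(\tau\theta\varphi) + O(\dt^2(\tau\partial_t(\theta\varphi))^2)$ at each grid point, one obtains
\begin{equation*}
e^{\tau\theta\varphi}\Dtbar(e^{-\tau\theta\varphi} z) = \Dtbar z - \Dtbar(\tau\theta)\,\varphi\, \taubp z + r(z),
\end{equation*}
where $r(z)$ is a first-order discrete remainder in $\dt$ times $\tau\partial_t(\theta\varphi)$. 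Splitting $Pz = P_1 z + P_2 z + R z$ into a self-adjoint part $P_1 z = -\Gamma\difx^2\taubm z + \tau^2\theta^2(\varphi_x)^2 z$, an anti-self-adjoint part $P_2 z = -\Dtbar z - 2\Gamma\tau\theta\varphi_x \difx\taubm z$, and a remainder $Rz$ absorbing the $\Gamma\tau\theta\varphi_{xx} z$, the $\Dtbar(\tau\theta)\varphi \taubp z$, and $r(z)$ terms, is then the working decomposition.

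Next, I would compute $\|P_1 z\|^2 + \|P_2 z\|^2 + 2\langle P_1 z, P_2 z\rangle \le 2\|e^{\tau\theta\varphi}L_{\sdmT}p\|^2 + 2\|Rz\|^2$ using the discrete integration-by-parts identities listed in \Cref{app:discrete_things}. The cross term $\langle P_1 z, P_2 z\rangle$ is expanded into four pieces; by exactly the continuous computation, the dominant contributions produce the Carleman positivity
\begin{equation*}
C\tau\dbint_{Q}\tbm(e^{2\tau\theta\varphi}\theta)\lambda^2(\beta')^2|\difx\taubm z|^2 + C\tau^3\dbint_{Q}\tbm(e^{2\tau\theta\varphi}\theta^3)\lambda^4(\beta')^4\taubm z^2,
\end{equation*}
for $\lambda$ large enough, after reverting to $p$ and exploiting $|\beta'|\ge\delta$ away from $\omega_0$. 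The discrete integration by parts in time produces boundary contributions at $n=0$ and $n=M$ of the form $(e^{\tau\theta\varphi}p)^{n+1/2}$ and $(e^{\tau\theta\varphi}\difx p)^{M+1/2}$, which form the term $\mathcal W_H(p)$ on the right-hand side, with prefactor $(\dt)^{-1}$ coming from the $\frac{1}{\dt}$ in $\Dtbar$. The remaining part $\tau^3\theta^3 p^2$ localized to $\omega_0$ is moved to an observation on $\omega$ via a standard cut-off function $\chi\in C^\infty_c(\omega)$ with $\chi\equiv 1$ on $\omega_0$, integrating by parts twice in space against $\chi^2 e^{2\tau\theta\varphi}\theta^3$ and absorbing the resulting $\difx\taubm p$ term with the gradient part already produced.

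The main obstacle, and the source of the structural smallness condition \eqref{eq:cond_delta_calor}, is the remainder $\|Rz\|^2$ generated by $\Dtbar(\tau\theta)\varphi \taubp z$ and $r(z)$. Using $|\Dtbar\theta|\lesssim \theta^{1+1/m}T$ and $|\partial_t(\theta\varphi)|\lesssim \theta^{1+1/m}T\lambda\snorme{\varphi}_\infty$, each such term is of the form $\dt^k \tau^{k+1}\theta^{k+1+(k+1)/m}$ times a Carleman-positive quantity; to absorb them into the left-hand side one needs the smallness $\dt\tau\theta^{1+1/m}T\ll 1$ uniformly in time, which after using $\theta\le C(\delta T)^{-2m}$ becomes precisely \eqref{eq:cond_delta_calor}. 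Once this is arranged the remainders disappear at the price of the lower threshold $\tau\ge \tau_0(T^{2m}+T^{2m-1})$, and collecting the bounds yields the stated estimate.
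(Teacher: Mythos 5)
Your plan follows essentially the same route as the paper's: the paper does not reprove this lemma but defers it to \cite{BHS20}, noting only that the weight change is handled via $\theta'\leq CT\theta^{1+1/m}$ and $\max\theta\leq(\delta^mT^{2m})^{-1}$, and your conjugation--splitting--cross-product scheme with $\dt$-weighted remainders absorbed under a smallness condition is exactly the structure used there and in the paper's own \Cref{app:car_fourth} for the fourth-order analogue. Two small imprecisions, neither fatal: the $(\dt)^{-1}$ in front of $\mathcal W_H(p)$ does not come from the $1/\dt$ in $\Dtbar$ (that factor is cancelled by the $\dt$ in the time sum) but from bounding the endpoint weights $(\tau\theta)^k$ by $C(\dt)^{-1}$ via the smallness condition; and your heuristic $\dt\,\tau\,\theta^{1+1/m}T\ll 1$ yields roughly $\dt\,\tau\,\delta^{-(m+1)}T^{-(2m+1)}\lesssim 1$, which is weaker than \eqref{eq:cond_delta_calor} --- the exact exponents there are dictated by the worst remainders (e.g.\ the $\dt$-weighted $(\Dtbar z)^2$ terms), so one must track those explicitly rather than only the first-order Taylor error.
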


The proof \Cref{lem:carleman_heat} is essentially given in \cite{BHS20}. Actually, the authors prove this result by considering a slightly different weight, more precisely, they take $c_2=0$ and $c_1=K>\|\beta\|_{\infty}$ with $K$ large enough. Nonetheless, a close inspection shows that the proof can be adapted to our case just by considering that 

\begin{equation}
\theta^\prime(t)\leq CT \theta^{1+1/m}  \quad\text{and}\quad \max_{t\in[0,T]}\theta(t)\leq (\delta^mT^{2m})^{-1}.
\end{equation} 

Even though it does not explicitly appear in the Carleman estimate since it is hidden in the definition of \eqref{eq:def_theta}, the parameter $\delta$ plays a crucial role. On one hand, it is possible to define continuously the weight outside $[0,T]$, since functions on the dual mesh $\ov{\dmT}$ have one extra point lying outside this interval. On the other, it allows us to estimate the discrete derivative of $\theta$ (see \Cref{app:theta_comps}) and perform several related computations.

Now, we present a new uniform Carleman estimate for the time-discrete backward fourth-order parabolic operator defined on the dual grid, that is,
\begin{equation}\label{eq:4th_order_op}
(P_{\sdmT}q)^n=-(\Dtbar{q})^n+\gamma \partial_x^4\taubm{q}^n, \quad n\in\inter{0,M},
\end{equation}
where $q\in (H^4(0,1))^{\ov{\sdmT}}$. We have the following.
\begin{lem}\label{thm:time_discrete_fourth}
Let $m\geq 1/3$ and let $\varphi$ and $\theta$ be defined as in \eqref{eq:def_varphi} and \eqref{eq:def_theta} with $\beta$ verifying \eqref{eq:cond_beta1}--\eqref{eq:cond_beta2}. For the parameter $\lambda\geq 1$ sufficiently large, there exist $C>0$, $\tau_0\geq 1$, $\epsilon_0>0$, depending on $\omega$, $\lambda$, $\gamma$, and $m$, such that
\begin{align}\notag 
  \mathcal I_{KS}(q)
&\leq C\left(\dbint_{Q}\tbm(e^{2\tau\theta\varphi})|P_{\sdmT}q|^2+\tau^7\dbint_{\omega\times(0,T)}\tbm(e^{2\tau\theta\varphi}\theta^7)\taubm{q}^2\right) +C(\dt)^{-1}\mathcal W_{KS}(q)
\end{align}
for all $\tau\geq \tau_0(T^{2m}+T^{2m-1/3})$, and for all $\dt>0$ and $0<\delta\leq 1/2$ satisfying the condition
{
\begin{equation}\label{eq:cond_delta}
\frac{\dt\tau^{5}}{\delta ^{10m} T^{14m}} \leq \epsilon_0. 
\end{equation}
}
and $q$ is any time-discrete function in $q\in (H^4(0,1)\cap H_0^2(0,1))^{\ov{\sdmT}}$.
\end{lem}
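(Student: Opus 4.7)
The plan is to mimic the continuous proof of a Carleman inequality for the fourth--order parabolic operator (as in \cite{Zho12,CMP15}), carefully tracking the additional remainders that appear when the time derivative is replaced by the discrete operator $\Dtbar$, and then to use the condition \eqref{eq:cond_delta} to absorb them. More precisely, set $w = e^{\tau\theta\varphi} q$ on the dual grid. Conjugating $P_{\sdmT}$ by $e^{\tau\theta\varphi}$ and grouping by parity (symmetric/antisymmetric parts with respect to the relevant $L^2$--inner product on $(0,1)\times(0,T)$) yields a decomposition of the form $e^{\tau\theta\varphi}P_{\sdmT} q = P_1 w + P_2 w + R w$, where $P_1$ contains the ``symmetric'' part $\gamma\partial_x^4 + \text{lower order terms in } \tau\theta$, $P_2$ contains $-\Dtbar$ plus terms like $\tau\theta\varphi'\partial_x^3$, and $Rw$ collects zero--order terms together with the discrete commutator remainders coming from $\Dtbar(\theta\varphi w)\ne (\Dtbar(\theta\varphi)) w + (\taubm{\theta\varphi})\Dtbar w$ (the latter identity holds only up to a term controlled by $\dt\cdot\Dtbar\theta\cdot\Dtbar w$, and similar corrections for products).

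Next I would square and integrate over $Q$, obtaining
\[
\|P_1 w\|^2 + \|P_2 w\|^2 + 2\,(P_1 w, P_2 w)_{L^2_\sdmT(Q)} = \|e^{\tau\theta\varphi} P_{\sdmT} q\|^2 - 2(Rw, P_1 w + P_2 w) - \|Rw\|^2.
\]
The cross term $(P_1 w, P_2 w)$ is the engine of the proof: by repeated spatial integration by parts against $\partial_x^4$ and its conjugates, it produces the five positive dominant terms $\tau\theta|\partial_x^3 w|^2 + \tau^3\theta^3|\partial_x^2 w|^2 + \tau^5\theta^5|\partial_x w|^2 + \tau^7\theta^7 w^2$ (with the characteristic powers for the fourth--order weight function) plus the usual $\tau^{-1}\theta^{-1}(|\Dtbar w|^2+|\partial_x^4 w|^2)$ on the left after adding $\tau^{-1}\theta^{-1}(\|P_1 w\|^2 + \|P_2 w\|^2)$. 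The spatial integrations by parts are formally identical to the continuous fourth--order case; the novelty comes from the time terms: each integration by parts in time using the formula displayed in Section~\ref{sec:not_time} generates (i) boundary contributions at $n=0$ and $n=M$, which are exactly the terms gathered in $\mathcal W_{KS}(q)$ after undoing the change of variable $w \leftrightarrow q$, and (ii) discrete commutator terms involving $\Dtbar\theta$ and $\dt(\Dtbar\theta)^2$. For the latter I would invoke the elementary bounds $\theta'\le CT\theta^{1+1/m}$, $\max_{[0,T]}\theta \le (\delta^m T^{2m})^{-1}$ together with the discrete analogues given in \Cref{app:theta_comps}, so that each such remainder is bounded by a factor proportional to $\dt\tau^{k}\delta^{-jm}T^{-\ell m}$ times one of the dominant terms. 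The explicit exponents appearing in \eqref{eq:cond_delta} are dictated by the worst of those factors (the fifth power of $\tau$ and the tenth power of $\delta$ come from the term $\tau^7\theta^7 w^2$ combined with two discrete time derivatives of $\theta$), so that \eqref{eq:cond_delta} allows all remainders to be absorbed into the left--hand side.

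Once the $L^2$--weighted estimate for $w$ is in hand, I would localize the zero--order positive term: introduce a cutoff $\chi\in C^\infty_c(\omega)$ with $\chi\equiv 1$ on $\omega_0$, and use four successive integrations by parts to exchange the global term $\tau^7\iint\theta^7|\taubm{w}|^2$ for the single observation $\tau^7\iint_{\omega\times(0,T)}\theta^7|\taubm{w}|^2$ at the price of lower--order terms in $\tau^k\theta^k|\partial_x^{j} w|^2$ ($j\le 3$, $k<7$), all of which are already absorbed by the left--hand side for $\tau$ large. Finally, converting back from $w$ to $q$ via $w=e^{\tau\theta\varphi}q$ and verifying that the conditions $m\ge 1/3$ and $\tau\ge \tau_0(T^{2m}+T^{2m-1/3})$ (this exponent $1/3$ is the one produced by $\Dtbar(e^{\tau\theta\varphi})$ on the dual mesh for the fourth--order scaling) are exactly what is needed to control the pollution generated by the conjugation, one recovers $\mathcal I_{KS}(q)$ on the left, the source term and observation on the right, and the boundary package $(\dt)^{-1}\mathcal W_{KS}(q)$ coming from the discrete temporal integrations by parts.

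The main technical obstacle I anticipate is the bookkeeping of the discrete time commutators when several powers of $\tau\theta$ act on $w$. In the continuous case the Leibniz rule splits $\partial_t(\theta\varphi w)$ cleanly; on the dual grid each such manipulation produces an extra factor $\dt\,\Dtbar(\theta^k)$, and since $\Dtbar\theta$ scales worse and worse as $m$ decreases, one has to show that \emph{every} such remainder gathered along the proof fits inside the single scaling constraint \eqref{eq:cond_delta}. Establishing (and checking uniformly in $n$) the pointwise bounds on $\Dtbar\theta^k$, $\Dtbar(\theta^k\varphi^\ell)$, and their iterates on $\ov{\sdmT}$ is the arithmetically heavy step; everything else is the standard (if lengthy) Carleman machinery applied to a fourth--order operator.
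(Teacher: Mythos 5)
Your proposal follows essentially the same route as the paper's proof in Appendix~B: conjugate by $e^{\tau\theta\varphi}$, split the conjugated identity into two operator groups plus a remainder, expand the cross product with spatial and discrete temporal integrations by parts (the latter producing the $\mathcal W_{KS}$ boundary package and the $\dt$-weighted commutator remainders), absorb the remainders under \eqref{eq:cond_delta}, localize the observation by successive cutoff integrations by parts, and return to $q$. The only inaccuracy is a detail of bookkeeping: in the paper the $\tau^5$ in \eqref{eq:cond_delta} arises from measuring the remainder $\dt\,\taubm{s}^4(\Dtbar z)^2$ against the absorbing term $\taubm{s}^{-1}(\Dtbar z)^2$, and the exponent $10m$ on $\delta$ comes from unifying the various conditions using $m\geq 1/3$, rather than from ``two discrete time derivatives of $\theta$'' acting on $\tau^7\theta^7 w^2$.
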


The proof of this result follows the steps presented in \cite[Section 2]{BHS20} for the time-discrete Carleman estimate for second-order parabolic operators, together with some new computations specific to the fourth-order case. We give an abridged proof in \Cref{app:car_fourth}

We readily observe that this new estimate has the same structure as in the continuous case (cf. \cite[Theorem 3.3]{CMP15}) but due to the discrete nature of the problem it has some remainder terms like the Carleman estimate for the second-order parabolic operator. Note that in this case, we have several extra terms corresponding to first and second order derivatives of the function $q$. As explained in \cite{BHS20}, it is not clear if these terms can be removed, even for the heat equation.  Lastly, as discussed in \cite[Remark 3.4]{CC16}, $m=1/3$ is the actual optimal power and improves the estimation of the cost of control  of the linear KS equation (see \cite{CG16}). The same remark applies here.

\subsection{A carleman estimate with only one observation}

We present a Carleman estimate for the coupled adjoint system \eqref{eq:adj_ks_heat} with only one observation on the right-hand side. Using the notation of \Cref{sec:not_time}, we compactly rewrite \eqref{eq:adj_ks_heat} as

\begin{equation}\label{eq:adj_ks_heat_compact}
\begin{cases}
\D -(\Dtbar p)-\Gamma \difx^2 \taubm{p}^{n}-c\difx \taubm{p}^{n}=\taubm{q}^{n} &n\in\inter{1,M}, \\
\D -(\Dtbar q)+\gamma \difx^4 \taubm{q}^{n}-\difx^3 \taubm{q}^{n}+a\difx^2 \taubm{q}^{n}=\taubm{p}^{n}&n\in\inter{1,M}, \\
p^{M+\frac12}=p_T, \quad q^{M+\frac12}=q_T.
\end{cases}
\end{equation}

We have the following.
\begin{prop}\label{prop:car_one_obs} Let $m\geq 1$ and let $\varphi$ and $\theta$ be defined as in \eqref{eq:def_varphi} and \eqref{eq:def_theta} with $\beta$ verifying \eqref{eq:cond_beta1}--\eqref{eq:cond_beta2}. For the parameter $\lambda\geq 1$ sufficiently large, there exist $C>0$, $\tau_1\geq 1$, $\epsilon_1>0$, depending on $\omega$, $\lambda$, $\Gamma$, $\gamma$, and $m$, such that for any $(p_T,q_T)\in H_0^1(\Omega)\times H_0^2(\Omega)$, the solution $(p,q)$ to \eqref{eq:adj_ks_heat_compact} satisfies
\begin{align}\label{eq:car_with_one_obs}
\mathcal I_{H}&(p)+\mathcal I_{KS}(q)  \leq C\left(\dbint_{\omega_1\times(0,T)}  \tbm(e^{2s\varphi}s^{39})\taubm{q}^2  \right) + C(\dt)^{-1}\Big(\mathcal W_{H}(p)+ \mathcal W_{KS}(q)\Big),
\end{align}
for all $\tau\geq \tau_1(T^{2m}+T^{2m-1}+T^{2m-1/3})$, and for all $\dt>0$ and $0<\delta\leq 1/2$ verifying the condition
\begin{equation}\label{eq:cond_delta_car_single}
\frac{\dt\tau^{10}}{\delta ^{10m} T^{20m}} \leq \epsilon_1.
\end{equation}
\end{prop}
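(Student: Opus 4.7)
The plan is to combine the two single-equation Carleman estimates (\Cref{lem:carleman_heat} and \Cref{thm:time_discrete_fourth}) and then run a local energy (bootstrap) argument to trade the observation of $p$ for a higher-weighted observation of $q$, following the strategy of \cite{CMP15,GBdT10} and adapting it to the time-discrete grid. First, I would apply \Cref{lem:carleman_heat} to $p$ with $L_{\sdmT}p = \taubm{q} + c\difx\taubm{p}$ and \Cref{thm:time_discrete_fourth} to $q$ with $P_{\sdmT}q = \taubm{p} + \difx^3\taubm{q} - a\difx^2\taubm{q}$, using the coupling terms of \eqref{eq:adj_ks_heat_compact}. Summing, the five bulk contributions produced on the right-hand side have weights without powers of $\tau\theta$, while $\mathcal I_H(p)+\mathcal I_{KS}(q)$ controls each of them with at least one extra factor of $\tau\theta$; hence for $\tau \geq \tau_1(T^{2m}+T^{2m-1}+T^{2m-1/3})$ all five are absorbed into the left-hand side, leaving the two Carleman observations on $\omega$, namely $\tau^3\ddbint_{\omega\times(0,T)}\tbm(e^{2\tau\theta\varphi}\theta^3)\taubm{p}^2$ and $\tau^7\ddbint_{\omega\times(0,T)}\tbm(e^{2\tau\theta\varphi}\theta^7)\taubm{q}^2$.

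Second, to eliminate the $p$-observation I would fix an intermediate open set with $\omega_0 \Subset \omega_1 \Subset \omega$ and a cutoff $\eta\in C^\infty_c(\omega_1)$ with $\eta\equiv 1$ on a neighbourhood of $\omega_0$. Using the second equation of \eqref{eq:adj_ks_heat_compact} to express $\taubm{p}$ as $-\Dtbar q + \gamma\difx^4\taubm{q} - \difx^3\taubm{q} + a\difx^2\taubm{q}$, I multiply this identity by $\xi := \eta\,\tbm(e^{2\tau\theta\varphi}\theta^3)\taubm{p}$ and sum over the dual grid in time and $\Omega$ in space. The diagonal contribution on the left produces exactly a term that dominates the unwanted observation of $\taubm{p}^2$ on $\omega$ (through $\eta\equiv 1$ there). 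On the right, I perform four spatial integrations by parts for the $\difx^4$ term (and one or two for the lower-order spatial terms), and one time integration by parts (using the identity $\dint_0^T(\Dt u,v)_H=-(u^0,v^{1/2})_H+(u^M,v^{M+1/2})_H-\int_0^T(\Dtbar v,u)_H$) for the $\Dtbar q$ term, transferring derivatives from $\taubm{q}$ onto $\xi$.

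Third, each resulting bulk integral contains either (i) derivatives of $\taubm{p}$ of order $\leq 2$, (ii) derivatives of $\taubm{q}$ of order $\leq 3$, or (iii) pure weight derivatives multiplied by $\taubm{q}$ and by derivatives of $\eta$ (hence supported in $\omega_1$). A Cauchy–Schwarz split with constant proportional to a suitable power of $\tau$ sends (i) into the terms of $\mathcal I_H(p)$ and (ii) into the terms of $\mathcal I_{KS}(q)$, where they are absorbed for $\tau$ large. The remaining terms of type (iii) yield the single observation $\ddbint_{\omega_1\times(0,T)}\tbm(e^{2\tau\theta\varphi}s^{39})\taubm{q}^2$, where the power $39$ arises from the worst combination of (a) the weight $\theta^3$ in $\xi$, (b) up to four spatial differentiations of the weight (each producing a factor $\lambda\tau\theta$), and (c) the squaring induced by the successive Cauchy–Schwarz steps; tracking these powers systematically gives $s^{39}$ in the notation $s=\tau\theta$.

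The main obstacle will be the discrete-time bookkeeping, not the algebra. In the continuous proof one freely integrates by parts in $t$ and uses $\partial_t(fg)=f\partial_t g+g\partial_t f$; on the dual grid, however, the discrete Leibniz rule for $\Dtbar$ applied to $\xi = \eta\,\tbm(e^{2\tau\theta\varphi}\theta^3)\taubm{p}$ produces mismatched translation factors and a remainder proportional to $\dt$ times a discrete $t$-derivative of the weight, which by \Cref{app:theta_comps} is of order $T\theta^{1+1/m}$. Squaring such mismatches produces precisely the factor $\dt\,\tau^{10}/(\delta^{10m}T^{20m})$ that must be kept small; this explains the strengthened hypothesis \eqref{eq:cond_delta_car_single} compared with \eqref{eq:cond_delta_calor} and \eqref{eq:cond_delta}. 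The boundary contributions from the time integration by parts land at the discrete times $t_0$, $t_M$, $t_{M+1/2}$ and account for the $(\dt)^{-1}(\mathcal W_H(p)+\mathcal W_{KS}(q))$ term on the right-hand side of \eqref{eq:car_with_one_obs}. Once these discrete remainders are absorbed under \eqref{eq:cond_delta_car_single}, the estimate \eqref{eq:car_with_one_obs} follows.
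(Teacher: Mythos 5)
Your route is the paper's route: sum the two single-equation Carleman estimates and absorb the zero/low-order couplings for $\tau$ large; then substitute $\taubm{p}=-\Dtbar q+\gamma\difx^4\taubm q-\difx^3\taubm q+a\difx^2\taubm q$ into the local term $\dbint_{\omega_0\times(0,T)}\tbm(e^{2s\varphi}s^3)\taubm{p}^2$ through a cutoff, and treat the $\Dtbar q$ contribution with the discrete Leibniz rule and discrete integration by parts --- which, as you correctly identify, is the source both of the boundary terms collected in $\mathcal W_H$, $\mathcal W_{KS}$ and of the strengthened smallness condition \eqref{eq:cond_delta_car_single}. That part of your sketch is sound.

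The gap is in your third step. After the Young splitting, the bulk terms carrying $\difx^2\taubm q$ and $\difx^3\taubm q$ cannot be ``sent into the terms of $\mathcal I_{KS}(q)$, where they are absorbed for $\tau$ large'': in $|ab|\le\varrho a^2+\varrho^{-1}b^2$ only one factor receives the small constant, and it must go to the $p$-side so that those pieces are absorbed by $\mathcal I_H(p)$. The $q$-derivative factors therefore appear with the large constant $C/\varrho$ and with weights $e^{2s\varphi}s^3|\difx^2\taubm q|^2$ and $e^{2s\varphi}s^5|\difx^3\taubm q|^2$ (the latter even exceeding the weight $e^{2s\varphi}s$ present in $\mathcal I_{KS}(q)$ by four powers of $s$); they survive only because the cutoff localizes them in $\omega_1$, and they must be kept as \emph{local} observations of $\difx^2 q$ and $\difx^3 q$. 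Your taxonomy has no category for these, so your argument terminates with derivative observations of $q$ on the right-hand side rather than the single observation in \eqref{eq:car_with_one_obs}. The missing ingredient is the iterated Caccioppoli-type bootstrap over nested subdomains $\omega_1\Subset\omega_2\Subset\omega_3\Subset\omega$ (estimates \eqref{eq:est_local_2}--\eqref{eq:est_local_4}): each stage trades a local derivative observation for a small global term, absorbed by $\mathcal I_{KS}(q)$, plus a local observation one derivative lower whose power of $s$ essentially doubles, giving the chain $s^5\to s^{11}\to s^{21}\to s^{39}$. This chain --- not the differentiation of the weight inside the cutoff --- is what produces the exponent $39$, and without it the proof of \eqref{eq:car_with_one_obs} is not complete.
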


\begin{proof}
We follow the classical methodology of \cite{GBdT10} (see also \cite{CMP15}) adapted to the time-discrete case. For clarity, we have divided the proof in three steps. 

\smallskip
\textit{Step 1. First estimates.} We apply the corresponding Carleman estimate to each equation in the system. Note that the condition for $\epsilon_0$ is stronger for the fourth-order equation, thus conditions \eqref{eq:cond_delta_calor} and \eqref{eq:cond_delta} can be put in a single one.  More precisely, adding up the estimates, we obtain
\begin{align*}\notag
&I_{H}(p)+I_{KS}(q) \\
&\leq C\left(\dbint_{\omega_0\times(0,T)}\left[\tbm(e^{2s\varphi }s^3 p^2)+\tbm(e^{2s\varphi}s^7q^2)\right] + \dbint_{Q} \taubm{e^{2s\varphi}} \left|c\difx\taubm{p}^2+\taubm{q}\right|^2 \right) \\
&\quad +C\left(\dbint_{Q}\taubm{e^{2s\varphi}}\left|-a\difx^2\taubm{q}+\difx^3\taubm{q}-\taubm{p}\right|^2 \right) + C(\dt)^{-1}\Big(\mathcal W_{H}(p)+ \mathcal W_{KS}(q)\Big),
\end{align*}
for all $\tau \geq \tau_0(T^{2m}+T^{2m-1}+T^{2m-1/3})$ and $\dt \tau^5(\delta^{10m}T^{14m})^{-1}\leq \epsilon_0$. Since $1\leq CT^{2m}\theta$, we can choose $\tau_1\geq \tau_0$ large enough (depending on $a$ and $c$) such that
\begin{align}\notag
&I_{H}(p)+I_{KS}(q) \\ \label{eq:car_coup_init}
&\quad \leq C\left(\dbint_{\omega_0\times(0,T)}\left[\tbm(e^{2s\varphi }s^3 p^2)+\tbm(e^{2s\varphi}s^7q^2)\right]\right) + C(\dt)^{-1}\Big(\mathcal W_{H}(p)+ \mathcal W_{KS}(q)\Big),
\end{align}
for all $\tau \geq \tau_1(T^{2m}+T^{2m-1}+T^{2m-1/3})$ and 
\begin{equation}\label{eq:small_cond_init}
\frac{\dt \tau^5}{\delta^{10m}T^{14m}} \leq \epsilon_0.
\end{equation}

\smallskip
\textit{Step 2. A local energy estimate.} In this step, we will obtain an estimate for the local term for $p$ in terms of several observations for $q$ and its spatial derivatives. Let $\omega_0\Subset \omega_1\Subset \omega$ and let $\eta\in C_c^\infty(\omega_1)$ such that $\eta=1$ in $\omega_0$. From the equation verified by $p$ we have
\begin{align}\notag 
\dbint_{\omega_0\times(0,T)}  \tbm(e^{2s\varphi}s^3)\taubm{p}^2 &\leq \dbint_{\omega_1\times(0,T)} \eta \tbm(e^{2s\varphi}s^3)\taubm{p}\left(-\Dtbar q + \difx^4\taubm{q}+\difx^3 \taubm{q}+a\difx^2 \taubm{q}\right)  \\ \label{eq:est_local_inicial}
& =: I_1+I_2
\end{align}
with 
\begin{align*}
I_1&=\dbint_{\omega_1\times(0,T)} \eta\, \tbm(e^{2s\varphi}s^3) \taubm{p}\left[\difx^4\taubm{q}+\difx^3\taubm{q}+a\difx^2\taubm{q}\right], \\
I_2&=-\dbint_{\omega_1\times(0,T)}\eta\, \tbm(e^{2s\varphi}s^3)\taubm{p}\Dtbar q.
\end{align*}

We note that $I_1$ does not involve time-discrete derivatives. Thus, using the estimate $\left|\difx (e^{2s\varphi} s^3\eta)\right|\leq C e^{2s\varphi}s^4$ in $\omega_1\times(0,T)$, we can integrate by parts in the fourth-order term and use Cauchy-Schwarz and Young inequalities to get
\begin{align}\notag
|I_1| &\leq \varrho \left(\dbint_{Q} \left[\tbm(e^{2s\varphi}s)|\difx\taubm{p}|^2 + \tbm(e^{2s\varphi}s^3)\taubm{p}^2 \right] \right) \\ \label{eq:est_I1}
&\quad + \frac{C}{\varrho} \left(\dbint_{\omega_1\times(0,T)} \left[\tbm(e^{2s\varphi}s^3)|\difx^2\taubm{q}|^2+\tbm(e^{2s\varphi}s^5)|\difx^3 \taubm{q}|^2 \right] \right),
\end{align}
for any $0<\varrho<1$ and some $C>0$ only depending on $\omega_1$ and a.

To bound $I_2$ we argue as follows. Discrete integration by parts (see formula \eqref{by_parts_same}) yields
\begin{align}\notag
I_2&=\left((e^{s\varphi}s^{3/2}\eta^{1/2}p)^{\frac{1}{2}}, (e^{s\varphi}s^{3/2}\eta^{1/2}q)^{\frac{1}{2}}\right)_{L^2(\Omega)}-\left((e^{s\varphi}s^{3/2}\eta^{1/2}p)^{M+\frac{1}{2}}, (e^{s\varphi}s^{3/2}\eta^{1/2}q)^{M+\frac{1}{2}}\right)_{L^2(\Omega)} \\ \label{eq:def_I2}
&\quad + \dbint_{Q}\taubp{q}\Dtbar(e^{2s\varphi}s^3 p) \eta := \sum_{j=1}^{3} I_2^{(j)}.
\end{align}

A direct computation gives
\begin{equation}\label{eq:est_I21_I22}
|I_2^{(1)}|+|I_2^{(2)}| \leq C\sum_{n\in\{0,M\}}\left(\left|(e^{s\varphi}s^{3/2}p)^{n+\frac12}\right|^2_{L^2(\Omega)}+\left|(e^{s\varphi}s^{3/2}q)^{n+\frac12}\right|^2_{L^2(\Omega)}\right).
\end{equation}
On the other hand, using formula \eqref{deriv_prod}, we have
\begin{equation*}
I_2^{(3)}=\dbint_{Q} \eta\, \tbm(e^{2s\varphi}s^3)\taubp{q} \Dtbar p  + \dbint_{Q} \eta \Dtbar(e^{2s\varphi}s^3)\taubp{p}\taubp{q},
\end{equation*}
whence, using that $\taubp{q}=\taubm{q}+\dt \Dtbar q$ and $\Dtbar(e^{2s\varphi}\theta^3)=\taubp{e^{2s\varphi}}\Dtbar(\theta^3)+\taubm{\theta}^3\Dtbar(e^{2s\varphi})$ (which follows from \eqref{deriv_prod}), we get
\begin{align}\notag
I_2^{(3)} &= \dbint_{Q}\eta\, \tbm(e^{2s\varphi}s^3) \taubm{q}\Dtbar p + \dt \dbint_{Q} \eta\, \tbm(e^{2s\varphi}s^3)(\Dtbar p)(\Dtbar q) \\ \label{eq:def_I23}
& + \dbint_{Q} \eta\, \taubp{e^{2s\varphi }}\taubp{p}\taubp{q} \tau^3 \Dtbar(\theta^3) + \dbint_{Q} \eta\, \taubp{p}\taubp{q} \taubm{s}^3 \Dtbar(e^{2s\varphi}):= \sum_{i=1}^{4} H_i.
\end{align}

Using Cauchy-Schwarz and Young inequalities, we readily get
\begin{align*}
|H_1|+|H_2| &\leq \varrho \dbint_{Q} \tbm(e^{2s\varphi}s^{-1}) (\Dtbar p)^2 + \frac{\dt}{2}\dbint_{Q} \tbm(e^{2s\varphi}s^3)\left[(\Dtbar p)^2+(\Dtbar q)^2\right] \\ 
&\quad + \frac{C}{\varrho}\dbint_{\omega_1\times(0,T)} \tbm(e^{2s\varphi}s^7 )\taubm{q}^2.
\end{align*}
Now, let us choose $\epsilon_1>0$ small enough such that $\epsilon_1\leq \min\{\epsilon_0, 1/2C\}$ where $C>0$ is the constant appearing in \eqref{eq:car_coup_init}. Hence, setting 
\begin{equation}\label{eq:cond_epsilon_1}
\frac{\dt \tau^4}{\delta^{4m}T^{8m}}\leq \epsilon_1,
\end{equation}
we have
\begin{align}\notag 
|H_1|+|H_2| &\leq \varrho \dbint_{Q} \tbm(e^{2s\varphi}s^{-1}) (\Dtbar p)^2 + \epsilon_1 \dbint_{Q} \tbm(e^{2s\varphi}s^{-1})\left[(\Dtbar p)^2+(\Dtbar q)^2\right] \\ \label{H1_H2}
&\quad + \frac{C}{\varrho}\dbint_{\omega_1\times(0,T)} \tbm(e^{2s\varphi}s^7 )\taubm{q}^2.
\end{align}

For the third term, using formula \ref{est_dt_square} with $\ell=3$ and the facts that $\theta^{-1}(t)\leq CT^{2m}$ and $\tau\geq CT^{2m-1}$ we have
\begin{equation*}
|H_3|\leq \dbint_{Q} \eta |\taubp{p}||\taubm{q}| \tbp(e^{2s\varphi}s^4) + \dbint_{Q} \eta |\taubp{p}||\taubm{q}| \taubp{e^{2s\varphi}}\frac{\dt \tau^3}{\delta^{3m+2}T^{6m+2}}
\end{equation*}
Since $m\geq1/3$, provided
\begin{equation}\label{eq:dt_small_1}
\frac{\dt \tau^3}{\delta^{9m}T^{12m}}\leq 1
\end{equation}
we can use the properties of the function $\eta$ to deduce
\begin{equation*}
|H_3|\leq 2\varrho \dbint_{Q}\tbp(e^{2s\varphi} s^3)\taubp{p}^2+ \frac{C}{\varrho}\dbint_{\omega_1\times(0,T)}\tbp(e^{2s\varphi}s^5)\taubp{q}^2
\end{equation*}
for any $0<\varrho<1$ and where we have used that $s(t)^{-1}\leq 1$ for $t\in[0,T]$ by increasing (if necessary) the value of $\tau_1$. By shifting the above integrals (see formula \eqref{trans_doub2}), we get
\begin{align}\notag
|H_3| &\leq 2\varrho \dbint_{Q}\tbp(e^{2s\varphi} s^3)\taubp{p}^2+ \frac{C}{\varrho}\dbint_{\omega_1\times(0,T)}\tbp(e^{2s\varphi}s^5)\taubp{q}^2  \\ \label{H3}
&\quad + C \dt \left(\left|(e^{s\varphi}s^{3/2}p)^{M+\frac12}\right|^2_{L^2(\Omega)}+\left|(e^{s\varphi}s^{5/2}q)^{M+\frac12}\right|^2_{L^2(\Omega)}\right).
\end{align}

From \Cref{lem:deriv_lemma_time} and \Cref{rmk:shifts_equiv}, we can compute 
\[
|\Dtbar(e^{2s\varphi})|\leq C \left(\taup{e^{2s\varphi}}\taubp{s}^2+\taubp{e^{2s\varphi}}\frac{\dt \tau^2}{\delta^{2m+2}T^{4m+2}}\right).
\]
This, together with Lemma \ref{shift} and arguments similar to the ones used for estimating $H_3$, yield that
\begin{align}\notag
|H_4|&\leq 4 \varrho\dbint_{Q}\tbm(e^{2s\varphi}s^3)\taubm{p}^2+ \frac{C}{\varrho} \dbint_{\omega_1\times(0,T)} \tbm(e^{2s\varphi}s^7)\taubm{p}^2 \\ \label{H4}
&\quad + C \dt \left(\left|(e^{s\varphi}s^{3/2}p)^{M+\frac12}\right|^2_{L^2(\Omega)}+\left|(e^{s\varphi}s^{7/2}q)^{M+\frac12}\right|^2_{L^2(\Omega)}\right)
\end{align}
provided \eqref{eq:dt_small_1} holds. Having reached this point, we observe that the smallness conditions \eqref{eq:small_cond_init}, \eqref{eq:cond_epsilon_1}, and \eqref{eq:dt_small_1} can be combined into a single one verifying
\begin{equation}\label{eq:small_final}
\frac{\dt \tau^{10}}{\delta^{10m}T^{20m}} \leq \epsilon_1
\end{equation}
with $\epsilon_1>0$ small enough as above. We have chosen these particular powers for $\tau$, $\delta$ and $T$ since they simplify the computations in the following section.

To conclude this step, we just have to combine expressions \eqref{eq:est_I1}--\eqref{eq:def_I23}, \eqref{H1_H2}, \eqref{H3}--\eqref{H4} into \eqref{eq:est_local_inicial} to obtain
\begin{align}\notag 
\dbint_{\omega_0\times(0,T)}&  \tbm(e^{2s\varphi}s^3)\taubm{p}^2 \\ \notag
&\leq 8 \varrho\left(\dbint_{Q} \left[\tbm(e^{2s\varphi}s)|\difx\taubm{p}|^2 + \tbm(e^{2s\varphi}s^3)\taubm{p}^2 + \tbm(e^{2s\varphi}s^{-1})(\Dtbar p)^2 \right] \right) \\ \notag
&\quad +  \epsilon_1 \dbint_{Q} \tbm(e^{2s\varphi}s^{-1})\left[(\Dtbar p)^2+(\Dtbar q)^2\right] \\ \notag
&\quad + \frac{C}{\varrho} \left(\dbint_{\omega_1\times(0,T)}\tbm(e^{2s\varphi}s^3)|\difx^2\taubm{q}|^2+\tbm(e^{2s\varphi}s^5)|\difx^3 \taubm{q}|^2+\tbm(e^{2s\varphi}s^7) \taubm{q}^2 \right) \\ \label{eq:est_local_final}
&\quad + C (\dt)^{-1} \left(\left|(e^{s\varphi}p)^{M+\frac12}\right|^2_{L^2(\Omega)}+\left|(e^{s\varphi}q)^{M+\frac12}\right|^2_{L^2(\Omega)}\right)
\end{align}
for any $0<\varrho<1$. In the last line, we have used \eqref{eq:small_final} to remove the powers of $s$.

\smallskip
\textit{Step 3. Last arrangements and conclusion.}
Using estimate \eqref{eq:est_local_final} in inequality \eqref{eq:car_coup_init}, taking $\varrho>0$ small enough and recalling the definition of $\epsilon_1$, we readily get
\begin{align}\notag
I_{H}&(p)+I_{KS}(q) \\ \notag
& \leq C\left(\dbint_{\omega_1\times(0,T)} \left[ \tbm(e^{2s\varphi}s^3)|\difx^2\taubm{q}|^2+\tbm(e^{2s\varphi}s^5)|\difx^3 \taubm{q}|^2+\tbm(e^{2s\varphi}s^7) \taubm{q}^2 \right] \right) \\ \label{eq:est_sin_local_p}
&\quad + C(\dt)^{-1}\Big(\mathcal W_{H}(p)+ \mathcal W_{KS}(q)\Big),
\end{align}
for all $\tau \geq \tau_1(T^{2m}+T^{2m-1}+T^{2m-1/3})$ and provided \eqref{eq:small_final} holds. Notice that we have removed the local term of $p$ at the price of having observations of the space derivatives of $q$.

Now, we claim that for any $\vartheta>0$, there exists a constant $C_{\vartheta}>0$ uniform with respect to $\dt$ such that 
\begin{align}\notag 
&\dbint_{\omega_1\times(0,T)} \left[ \tbm(e^{2s\varphi}s^3)|\difx^2\taubm{q}|^2+\tbm(e^{2s\varphi}s^5)|\difx^3 \taubm{q}|^2+\tbm(e^{2s\varphi}s^7) \taubm{q}^2 \right] \\ \notag
&\leq \vartheta \left( \dbint_{Q} \left[ \tbm(e^{2s\varphi}s^{-1})|\difx^4\taubm{q}|^2+\tbm(e^{2s\varphi}s)|\difx^3 \taubm{q}|^2+\tbm(e^{2s\varphi}s^3) |\difx^2\taubm{q}|^2 \right] \right) \\ \label{eq:est_local_phi}
&\quad + C_{\vartheta} \dbint_{\omega\times(0,T)}{\tbm(e^{2s\varphi}s^{39})}\taubm{q}^2.
\end{align} 
If this inequality holds, we just have to use \eqref{eq:est_local_phi} in \eqref{eq:est_sin_local_p} and take $\vartheta>0$ sufficiently small to discover the Carleman estimate \eqref{eq:car_with_one_obs}.

To prove \eqref{eq:est_local_phi} we follow a standard procedure. Let $\omega_2$ an open set such that $\omega_1\Subset \omega_2\Subset \omega$ and $\zeta\in C_c^\infty(\omega_2)$ with $\zeta=1$ in $\omega_1$. Then,
\begin{align} \notag
\dbint_{\omega_1\times(0,T)} \tbm(e^{2s\varphi}s^5)|\difx^3\taubm{q}|^2 & \leq \dbint_{\omega_2\times(0,T)} \zeta \, \tbm(e^{2s\varphi}s^5) |\difx^3\taubm{q}|^2 \\ \notag
&=-\dbint_{\omega_2\times(0,T)} \zeta \, \tbm(e^{2s\varphi}s^5) \, \difx^4\taubm{q} \, \difx^2\taubm{q} \\ \label{eq:est_local_1}
&\quad + \frac{1}{2}\dbint_{\omega_2\times(0,T)} \tbm\left(\difx^2[\zeta e^{2s\varphi}s^5]\right) |\difx^2\taubm{q}|^2
\end{align}
and since $s$ only depends on time, in view of the estimate $|\difx^2(e^{2s\varphi})|\leq C e^{2s\varphi} s^2$ in $\omega_2\times(0,T)$, we have
\begin{align}\label{eq:est_local_2}
\dbint_{\omega_1\times(0,T)}&\tbm(e^{2s\varphi}s^5) |\difx^3\taubm{q}|^2 \leq \vartheta \dbint_{Q} \tbm(e^{2s\varphi} s^{-1})|\difx^4\taubm{q}|^2 + C_{\vartheta} \dbint_{\omega_2\times(0,T)} \tbm(e^{2s\varphi}s^{11})|\difx^2\taubm{q}|^2.
\end{align}
Analogously, if $\omega_3$ is an open set such that $\omega_2\Subset\omega_3\Subset \omega$, we get
\begin{align}\label{eq:est_local_3}
\dbint_{\omega_2\times(0,T)}&\tbm(e^{2s\varphi}s^{11}) |\difx^2\taubm{q}|^2 \leq \vartheta \dbint_{Q} \tbm(e^{2s\varphi} s)|\difx^3\taubm{q}|^2 + C_{\vartheta} \dbint_{\omega_3\times(0,T)} \tbm(e^{2s\varphi}s^{21})|\difx\taubm{q}|^2,
\end{align}
and finally
\begin{align}\label{eq:est_local_4}
\dbint_{\omega_3\times(0,T)}&\tbm(e^{2s\varphi}s^{21}) |\difx\taubm{q}|^2 \leq \vartheta \dbint_{Q} \tbm(e^{2s\varphi} s^3)|\difx^2\taubm{q}|^2 + C_{\vartheta} \dbint_{\omega\times(0,T)} \tbm(e^{2s\varphi}s^{39})\taubm{q}^2.
\end{align}
Thus, the claim follows from expressions \eqref{eq:est_local_1}--\eqref{eq:est_local_4}. This ends the proof. 
\end{proof}

\subsection{Proof of \Cref{eq:prop_obs}}

Now, we are in position to obtain the observability inequality \eqref{eq:obs_final}. The proof is divided into two steps: the first one resembles the continuous case and looks for estimating the weigt functions, while the second one is exclusive to the discrete case and connects the Carleman parameters with the discretization ones. 

In what follows, $C$ denotes a positive constant depending at most on $\Omega$, $\omega$, $m$, $\Gamma$, $\gamma$, $a$, and $c$, that may change from line to line. 

\smallskip
\textit{Step 1. Cleaning up the Carleman estimate.} In view of \eqref{trans_doub} and from our Carleman inequality with only one observation \eqref{eq:car_with_one_obs}, we have
\begin{equation}\label{eq:est_obs_init}
\ddbint_{Q}e^{2s\varphi} s^3 |p|^2 + \ddbint_{Q}e^{2s\varphi} s^7 |q|^2 \leq C\left(\ddbint_{\omega\times(0,T)}e^{2s\varphi}s^{39} |q|^2\right)+ C(\dt)^{-1}\Big(\mathcal W_{H}(p)+ \mathcal W_{KS}(q)\Big)
\end{equation}
for all $\tau\geq \tau_1(T^{2m}+T^{2m-1}+T^{2m-1/3})$ and any $\dt>0$ verifying \eqref{eq:cond_delta_car_single}. 

Now, we will remove the weights in inequality \eqref{eq:est_obs_init}. Since $\tau\geq 1$ and noting that $(e^{2s\varphi})^{n+\frac12}\geq e^{-\frac{2^{4m+1}\tau K_0}{3^m T^{2m}}}$ for $n\in\inter{M/4,3M/4}$, where $K_0:=\max_{x\in\ov{\Omega}}\{-\varphi(x)\}$, we see that the left hand-side of the above expression can be bounded as
\begin{equation}\label{eq:est_below_car}
\ddbint_{Q}e^{2s\varphi}s^3|p|^2+\ddbint_{Q}e^{2s\varphi}s^7|q|^2\geq  \sum_{n\in\inter{M/4,3M/4}}\dt \tau^3 e^{-\frac{C\tau}{T^{2m}}} T^{-6m} \left(|p^{n+\frac12}|^2_{L^2(\Omega)}+|q^{n+\frac12}|^2_{L^2(\Omega)}\right)
\end{equation}
for some $C>0$ uniform with respect to $\dt$. Using estimate \eqref{eq:est_normal} in \Cref{lem:est_method}, we see that after iteration
\begin{equation}\label{eq:disipation_normal}
\snorme{p^{\frac12}}^2_{L^2(\Omega)}+\snorme{q^{\frac12}}^2_{L^2(\Omega)}\leq e^{2CT} \left(\snorme{p^{n+\frac12}}^{2}_{L^2(\Omega)}+\snorme{q^{n+\frac12}}_{L^2(\Omega)}^2\right)
\end{equation}
for all $n\in\inter{1,M}$ and any $\dt>0$ such that $2C\dt<1$. Using this in the right-hand side of \eqref{eq:est_below_car} and summing over $n$, we have
\begin{align}\notag
\ddbint_{Q}e^{2s\varphi}s^3|p|^2+\ddbint_{Q}e^{2s\varphi}s^7|q|^2 &\geq \left(\frac{T}{2}-\dt\right) \tau^3 e^{-\frac{C\tau}{T^{2m}}-CT} T^{-6m} \left(\snorme{p^{\frac12}}^2_{L^2(\Omega)}+\snorme{q^{\frac12}}^2_{L^2(\Omega)}\right) \\ \label{eq:est_below_car_fin}
&\geq C T e^{-\frac{C\tau}{T^{2m}}-CT}  \left(\snorme{p^{\frac12}}^2_{L^2(\Omega)}+\snorme{q^{\frac12}}^2_{L^2(\Omega)}\right),
\end{align}
where we have used that $\tau\geq \tau_1 T^{2m}$.

Lets comeback to the right-hand side of \eqref{eq:est_obs_init}. Using \eqref{eq:cond_theta_plus}, we see that
\begin{align}\notag 
\mathcal W_{H}(p)+\mathcal W_{KS}(q) &\leq e^{-\frac{2^{m+1} k_0}{\delta^mT^{2m}}}\sum_{n\in\{0,M\}}\left(\snorme{p^{n+\frac12}}^2_{L^2(\Omega)}+\snorme{q^{n+\frac12}}^2_{L^2(\Omega)}+\snorme{\difx q^{n+\frac12}}^2_{L^2(\Omega)}\right) \\ \label{eq:est_boundaries_T}
&\quad + e^{-\frac{2^{m+1} k_0}{\delta^mT^{2m}}} \left(\snorme{\difx p^{M+\frac12}}^2_{L^2(\Omega)}+\snorme{\difx^2 q^{M+\frac12}}^2_{L^2(\Omega)}\right),
\end{align}
where $k_0:=\min_{x\in\ov\Omega}\{-\varphi(x)\}$. Arguing as we did for \eqref{eq:disipation_normal}, we can obtain from \eqref{eq:est_regular} and Poincar\'e inequality that 
\begin{equation}\label{eq:disipation_regular}
\snorme{\difx^2 q^{\frac12}}_{L^2(\Omega)}^2 \leq C e^{CT}\left(\snorme{\difx p^{n+\frac12}}^2_{L^2(\Omega)}+\|\difx^2 q^{n+\frac12}\|^2_{L^2(\Omega)}\right), \quad \forall n\in\inter{0,M-1},
\end{equation}
since $(p_{|\partial\Omega})^{n-\frac12}=(q_{|\partial\Omega})^{n-\frac12}=(\difx q_{|\partial\Omega})^{n-\frac12}=0$ for all $n\in\inter{1,M}$. By assumption $(p_T,q_T)\in H_0^1(\Omega)\times H_0^2(\Omega)$, thus \eqref{eq:disipation_normal}, \eqref{eq:est_boundaries_T} and \eqref{eq:disipation_regular} yield
\begin{equation}\label{eq:est_Ws}
\mathcal W_{H}(p)+\mathcal W_{KS}(q) \leq e^{-C\frac{\tau}{\delta^m T^{2m}}+CT} \left(\snorme{\difx p_T}_{L^2(\Omega)}^2+\snorme{\difx^2 q_{T}}^2_{L^2(\Omega)}\right)
\end{equation}
for some $C>0$ uniform with respect to $\dt$ and $\delta$. 

We observe that $e^{2s\varphi} s^{39}\leq \tau^{39}2^{78m}T^{-78m}\exp\left(-\frac{2^{2m+1}k_0}{T^{2m}}\right)\leq C$ for all $(x,t)\in Q$, uniformly for $\tau\geq \frac{39}{2^{m+1}k_0}T^{2m}$. Hence, putting together \eqref{eq:est_obs_init}, \eqref{eq:est_below_car_fin} and \eqref{eq:est_Ws}, we get
\begin{align*}
&\snorme{p^{\frac12}}_{L^2(\Omega)}^2+\snorme{q^{\frac12}}_{L^2(\Omega)}^2 \\
&\quad \leq CT^{-1} e^{CT}\left( e^{\frac{C\tau}{T^{2m}}} \ddbint_{\omega\times(0,T)}|q|^2+(\dt)^{-1} e^{\frac{\tau}{T^{2m}}(C-\frac{C}{\delta^m})}\left[\snorme{\difx p_T}_{L^2(\Omega)}^2+\snorme{\difx^2 q_{T}}^2_{L^2(\Omega)}\right]  \right)
\end{align*}
for all $\tau\geq \tau_2(T^{2m}+T^{2m-1}+T^{2m-1/3})$ where $\tau_2\geq \max\{\tau_1,39/(2^{m+1}k_0)\}$. For $0<\delta \leq \delta_1<1/2$ with $\delta_1$ small enough, we get
\begin{equation}\label{eq:obs_casi}
\snorme{p^{\frac12}}_{L^2(\Omega)}^2+\snorme{q^{\frac12}}_{L^2(\Omega)}^2 \leq e^{C/T}\left( e^{\frac{C\tau}{T^{2m}}} \ddbint_{\omega\times(0,T)}|q|^2+(\dt)^{-1}e^{-\frac{C\tau}{\delta^{m}T^{2m}}} \left[\snorme{\difx p_T}_{L^2(\Omega)}^2+\snorme{\difx^2 q_{T}}^2_{L^2(\Omega)}\right]  \right).
\end{equation}
\smallskip

\textit{Step 2. Connection of the discrete parameters.} To conclude the proof, we will connect the parameters associated to the discretization, i.e., $\dt$ and $\delta$. We recall that the condition 
$
\frac{\dt \tau^{10}}{\delta^{10m}T^{20m}}\leq \epsilon_1
$
should be fulfilled along with $0<\delta\leq \delta_1$ and $2C\dt<1$ for some $C>0$. 

Let us fix $\tau=\tau_2(T^{2m}+T^{2m-1}+T^{2m-1/3})$ and define $\widetilde{\dt}:=\epsilon_0\frac{\delta_1^{10m}}{\tau_2^{10}}(1+\frac{1}{T}+\frac{1}{T^{1/3}})^{-10}$. Hence, $\frac{\tau^{10} \widetilde{\dt}}{\delta_1^{10m}T^{20m}}=\epsilon_0$. Now, we choose $\dt \leq \min\{\widetilde{\dt},1/2C\}$ and set 
$
\delta=\frac{(\dt)^{1/10m}\delta_1}{(\widetilde{\dt})^{1/10m}}\leq \delta_1.
$
We find that $\frac{\dt \tau^{10}}{\delta^{10m}T^{20m}}=\epsilon_0$ and $\tau/(\delta^mT^{2m})= \epsilon_0^{1/10}/(\dt)^{1/10}$, hence from \eqref{eq:obs_casi} we get
\begin{align*}
\snorme{p^{\frac12}}_{L^2(\Omega)}^2&+\snorme{q^{\frac12}}_{L^2(\Omega)}^2 \\
&\leq e^{C/T}\left( e^{C(1+{1}/{T}+{1}/{T^{1/3}})} \ddbint_{\omega\times(0,T)}|q|^2+(\dt)^{-1}e^{-\frac{C \epsilon_0^{1/10}}{(\dt)^{1/10}}} \left[\snorme{\difx p_T}_{L^2(\Omega)}^2+\snorme{\difx^2 q_{T}}^2_{L^2(\Omega)}\right]  \right), \\
&\leq e^{C/T}\left(\ddbint_{\omega\times(0,T)}|q|^2+e^{-\frac{C_1}{(\dt)^{1/10}}} \left[\snorme{\difx p_T}_{L^2(\Omega)}^2+\snorme{\difx^2 q_{T}}^2_{L^2(\Omega)}\right]  \right)
\end{align*}
for some $C_1>0$ uniform with respect to $\dt$  and where we have used that $T\in(0,1)$ to simplify in the first term. This ends the proof.

\section{$\phi(\dt)$-controllability}\label{sec:phi_dt_contr}

This section is devoted to prove \Cref{thm:main_1}. Due to the presence of the spatial derivatives of the initial data in the right-hand of \eqref{eq:obs_final}, we have to prove \Cref{thm:main_1} in two steps. First, using the well-known penalized Hilbert Uniqueness Method (see, for instance, \cite{GLH08,Boy13}) we build a time-discrete control providing a controllability result in the space $H^{-1}(\Omega)\times H^{-2}(\Omega)$. Then, using classical elliptic arguments, we obtain the controllability in the $L^2$-setting.

\subsection{Controllability in $H^{-1}\times H^{-2}$}

The goal here is to prove the following.
\begin{prop}\label{prop:control_hms}
Let $T\in(0,1)$, $\dt>0$ small enough and $\phi(\dt)$ verifying \eqref{eq:cond_dt}. Then, there exists a continuous and linear map $L_{T}^{\sdt}:[L^2(\Omega)]^2\to L^2_{\sdmT}(0,T;L^2(\omega))$ such that for all initial data $(u_0,v_0)\in [L^2(\Omega)]^2$, there exists a time-discrete control $h=L_{T}^{\sdt}(u_0,v_0)$ such that the solution to \eqref{eq:ks_heat} satisfies
\begin{equation}\label{eq:est_tiempo_M}
\norme{(u^M,v^{M})}_{H^{-1}(\Omega)\times H^{-2}(\Omega)}\leq C_T\sqrt{\phi(\dt)}\norme{(u_0,v_0)}_{[L^2(\Omega)]^2}
\end{equation}
and
\begin{equation}
\norme{h}_{L^2_{\sdmT}(0,T;L^2(\omega))}\leq C_T\norme{(u_0,v_0)}_{[L^2(\Omega)]^2},
\end{equation}
where $C_T>0$ is the constant appearing in \Cref{eq:prop_obs}.
\end{prop}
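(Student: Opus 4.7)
The approach is to apply the penalized Hilbert Uniqueness Method with penalty parameter $\epsilon:=\phi(\dt)$. For $(u_0,v_0)\in[L^2(\Omega)]^2$, I would introduce on the Hilbert space $\mathcal H:=H_0^1(\Omega)\times H_0^2(\Omega)$ the functional
\begin{equation*}
J_\epsilon(p_T,q_T):=\tfrac12\ddbint_{\omega\times(0,T)}|q|^2+\tfrac{\epsilon}{2}\left(\snorme{\difx p_T}^2_{L^2(\Omega)}+\snorme{\difx^2 q_T}^2_{L^2(\Omega)}\right)+(u_0,p^{\frac12})_{L^2(\Omega)}+(v_0,q^{\frac12})_{L^2(\Omega)},
\end{equation*}
where $(p,q)$ denotes the solution of the adjoint system \eqref{eq:adj_ks_heat_compact} with final data $(p_T,q_T)$.

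The first step is to establish the existence of a unique minimizer. Continuity and strict convexity of $J_\epsilon$ follow at once from linearity of the adjoint map. For coercivity, bound the linear terms by Cauchy--Schwarz and the relaxed observability inequality \eqref{eq:obs_final}:
\begin{equation*}
|(u_0,p^{\frac12})+(v_0,q^{\frac12})|\le \norme{(u_0,v_0)}_{[L^2(\Omega)]^2}\, C_T\left(\ddbint_{\omega\times(0,T)}|q|^2+e^{-C_1/(\dt)^{1/10}}\left(\snorme{\difx p_T}^2+\snorme{\difx^2 q_T}^2\right)\right)^{\!1/2}.
\end{equation*}
Assumption \eqref{eq:cond_dt} guarantees $e^{-C_1/(\dt)^{1/10}}\le c^{-1}\phi(\dt)=c^{-1}\epsilon$, so by Young's inequality the right-hand side is absorbed into $\tfrac12$ of the first two quadratic terms of $J_\epsilon$ plus $C\norme{(u_0,v_0)}^2$. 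A unique minimizer $(\hat p_T,\hat q_T)\in\mathcal H$ therefore exists and depends linearly and continuously on $(u_0,v_0)$; the candidate control is $h:=\hat q|_\omega\in L^2_{\sdmT}(0,T;L^2(\omega))$, which defines $L_T^{\sdt}$.

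Next, combine the Euler--Lagrange equation $DJ_\epsilon(\hat p_T,\hat q_T)=0$ with the discrete duality identity
\begin{equation*}
(u^M,p_T)_{L^2(\Omega)}+(v^M,q_T)_{L^2(\Omega)}=(u_0,p^{\frac12})_{L^2(\Omega)}+(v_0,q^{\frac12})_{L^2(\Omega)}+\ddbint_{\omega\times(0,T)}h\,q,
\end{equation*}
obtained by testing each equation of \eqref{eq:ks_heat} against $(p,q)$ and applying the integration-by-parts formula at the end of \Cref{sec:not_time}. This yields, for every $(p_T,q_T)\in\mathcal H$,
\begin{equation*}
(u^M,p_T)+(v^M,q_T)=-\epsilon\left((\difx\hat p_T,\difx p_T)_{L^2(\Omega)}+(\difx^2\hat q_T,\difx^2 q_T)_{L^2(\Omega)}\right),
\end{equation*}
whence $u^M\in H^{-1}(\Omega)$ and $v^M\in H^{-2}(\Omega)$ with $\norme{u^M}_{H^{-1}(\Omega)}\le\epsilon\,\snorme{\difx\hat p_T}_{L^2}$ and $\norme{v^M}_{H^{-2}(\Omega)}\le\epsilon\,\snorme{\difx^2\hat q_T}_{L^2}$. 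The quantitative bounds then follow from $J_\epsilon(\hat p_T,\hat q_T)\le J_\epsilon(0,0)=0$: applying \eqref{eq:obs_final} to the linear part and absorbing via Young together with \eqref{eq:cond_dt} gives
\begin{equation*}
\norme{h}_{L^2_{\sdmT}(0,T;L^2(\omega))}^2+\epsilon\left(\snorme{\difx\hat p_T}^2_{L^2}+\snorme{\difx^2\hat q_T}^2_{L^2}\right)\le C_T^2\norme{(u_0,v_0)}^2_{[L^2(\Omega)]^2}.
\end{equation*}
The first summand is the required control cost estimate; since $\epsilon=\phi(\dt)$, the second yields $\epsilon\,\snorme{(\difx\hat p_T,\difx^2\hat q_T)}\le C_T\sqrt{\phi(\dt)}\norme{(u_0,v_0)}$, and combining with the $H^{-1}$, $H^{-2}$ identifications above gives \eqref{eq:est_tiempo_M}.

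The main obstacle is the bookkeeping in the discrete duality identity: because the primal state $(u,v)$ lives on $\ov{\mT}$ while the adjoint $(p,q)$ lives on $\ov{\dmT}$, the time integration by parts generates boundary contributions at the dual endpoints $t_{1/2}$ and $t_{M+1/2}$ that must cancel exactly to leave the clean pairings $(u_0,p^{\frac12})$, $(u^M,p_T)$ (and their $v$-counterparts), as well as mutual cancellation of the skew-symmetric transport terms with coefficients $c$ and the third-order derivative. Once this identity is pinned down, the rest of the argument is a routine adaptation of the continuous penalized HUM framework.
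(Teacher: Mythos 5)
Your proposal is correct and follows essentially the same route as the paper: the penalized HUM functional with penalty $\phi(\dt)$ on the $H_0^1\times H_0^2$ seminorms, coercivity from the relaxed observability inequality \eqref{eq:obs_final} combined with condition \eqref{eq:cond_dt}, the Euler--Lagrange equation plus the discrete duality identity to identify $(u^M,v^M)=-\phi(\dt)\left((-\difx^2)\widehat{p_T},\difx^4\widehat{q_T}\right)$, and the energy identity at the minimizer for the quantitative bounds. The only cosmetic differences are that you derive the bounds from $J(\widehat{p_T},\widehat{q_T})\leq J(0,0)=0$ rather than by testing the Euler--Lagrange equation at the minimizer (equivalent up to constants), and that you absorb the general $\phi$ directly via \eqref{eq:cond_dt} instead of first treating the borderline case $\phi(\dt)=e^{-C_1/(\dt)^{1/10}}$ as the paper does.
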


\begin{proof} Consider the adjoint system \eqref{eq:adj_ks_heat} and the relaxed observability inequality of \Cref{eq:prop_obs}. We begin by proving the case where $\phi(\dt)={e^{-{C_1}/{(\dt)^{1/10}}}}$.
%

We introduce the time-discrete functional
\begin{align}\notag
J_{\dt}(p_T,q_T)=\frac{1}{2}\ddbint_{\omega\times(0,T)}|q|^2+\frac{\phi(\dt)}{2}\norme{(p_T,q_T)}_{H_0^1(\Omega)\times H^2_0(\Omega)}^2+(u_0,p^{\frac12})_{L^2(\Omega)}+(v_0,q^{\frac12})_{L^2(\Omega)}, \\ \label{eq:func_J_dt}
\quad\forall (p_T,q_T)\in H_0^1(\Omega)\times H_0^2(\Omega).
\end{align}
Clearly, the above functional is continuous and strictly convex. Since $p_T\in H_0^1(\Omega)$ (resp. $q_T\in H_0^2(\Omega)$), we have from Poincar\'e inequality that $\|p_T\|^2_{H_0^1(\Omega)}=\int_{\Omega}|\difx p_T|^2$ (resp. $\|q_T\|^2_{H_0^2(\Omega)}=\int_{\Omega}|\difx^2 q_T|^2$), and thus the observability inequality \eqref{eq:obs_final} implies the coerciveness of $J_{\dt}$. This guarantees the existence of a minimizer that we denote by $(\widehat{p_T}, \widehat{q_T})$.

Now, consider $(\widehat{p},\widehat{q})$ (resp. $(p,q)$) the solution to \eqref{eq:adj_ks_heat} with initial data $(\widehat{p_T}, \widehat{q_T})$ (resp. $(p_T,q_T)$). The Euler-Lagrange equation associated with the minimization of the functional \eqref{eq:func_J_dt} reads as
\begin{align}\notag
\ddbint_{\omega\times(0,T)}\widehat{q}q+\phi(\dt)\left[(\difx \widehat{p_T},\difx p_T)_{L^2(\Omega)}+(\difx^2 \widehat{q_T},\difx^2 q_T)_{L^2(\Omega)}\right]+ (u_0,p^{\frac12})_{L^2(\Omega)}+(v_0,q^{\frac12})_{L^2(\Omega)}=0, \\ \label{eq:E_L}
\quad\forall (p_T,q_T)\in H_0^1(\Omega)\times H_0^2(\Omega).
\end{align}
We set the control $h=L_{T}^{\sdt}(u_0,v_0)=(\chi_{\omega}\widehat{q}^{\,n+\frac12})_{n\in\inter{0,M-1}}$ and consider the solution $(u,v)$ of the controlled problem 
\begin{equation*}
\begin{cases}
\D (\Dt u)^{n+\frac12}-\Gamma \difx^2 \taup{u}^{n+\frac12}+c\difx \taup{u}^{n+\frac12}=\taup{v}^{n+\frac12} &n\in\inter{0,M-1}, \\
\D (\Dt v)^{n+\frac12}+\gamma \difx^4 \taup{v}^{n+\frac12}+\difx^3 \taup{v}^{n+\frac12}+a\difx^2 \taup{v}^{n+\frac12}=\taup{u}^{n+\frac12}+\chi_{\omega} \widehat{q}^{\,n+\frac12} &n\in\inter{0,M-1}, \\
u^0=u_0, \quad v^0=v_0.
\end{cases}
\end{equation*}
Multiplying this system by $(p^{n+\frac12},q^{n+\frac12})$ at each point of $\dmT$ and integrating in $L^2(\Omega)$, we have after integration by parts and summing in $n\in\inter{0,M-1}$ that
\begin{equation}\label{eq:duality}
\ddbint_{\omega\times(0,T)} \widehat{q}q=\langle u^{M},p_T\rangle_{-1,1} + \langle v^{M},q_T\rangle_{-2,2}-(u_0,p^{\frac12})_{L^2(\Omega)}-(v_0,q^{\frac12})_{L^2(\Omega)},
\end{equation}
where $\langle\cdot,\cdot\rangle_{-s,s}$ denotes the duality pairing between $H_0^s(\Omega)$ and $H^{-s}(\Omega)$, $s\in\mathbb N^*$. With \eqref{eq:E_L} and \eqref{eq:duality}, we deduce that
\begin{equation}\label{eq:estado_final}
(u^M,v^M)=-\phi(\dt)\left((-\difx^2)\widehat{p_T},\difx^4\widehat{q_T}\right).
\end{equation}

By taking $(p_T,q_T)=(\widehat{p_T},\widehat{q_T})$ in \eqref{eq:E_L}, we get 
\begin{align*}
\norme{h}_{L^2_{\sdmT}(0,T;L^2(\omega))}^2+\phi(\dt)\norme{(\widehat{p_T},\widehat{q_T})}_{H_0^1(\Omega)\times H^2_0(\Omega)}^2&=-(u_0,\widehat{p}^{\frac12})_{L^2(\Omega)}-(v_0,\widehat{q}^{\frac12})_{L^2(\Omega)} \\
&\leq \norme{(u_0,v_0)}_{[L^2(\Omega)]^2} \snorme{(\widehat{p}^{\frac12},\widehat{q}^\frac12)}_{[L^2(\Omega)]^2},
\end{align*}
thus, from \eqref{eq:obs_final}, we have $\|h\|_{L^2_{\sdmT}(0,T;L^2(\omega))}=\norme{\widehat{q}}_{L^2_{\sdmT}(0,T;L^2(\omega))}\leq C_T \norme{(u_0,v_0)}_{[L^2(\Omega)]^2}$ and
\begin{equation}\label{eq:est_final}
\sqrt{\phi(\dt)}\norme{(\widehat{p_T},\widehat{q_T})}_{H_0^1(\Omega)\times H^2_0(\Omega)} \leq C_T \norme{(u_0,v_0)}_{[L^2(\Omega)]^2}.
\end{equation}
In this way, the linear map $L_T^{\sdt}:[L^2(\Omega)]^2\to L^2_{\sdmT}(0,T;L^2(\omega))$ is well defined and continuous. Finally, expressions \eqref{eq:estado_final} and \eqref{eq:est_final} together with the definition of the $H^{-s}$-norm, $s\in\mathbb N^*$, yield \eqref{eq:est_tiempo_M}. This ends the proof for $\phi(\dt)=e^{-C_1/(\dt)^{1/10}}$.

The case of a general function $\phi$ follows similarly. Indeed, for any given $\phi(\dt)$ verifying \eqref{eq:cond_dt}, we see that there exists some $\ov{\dt}>0$ such that $\phi(\dt)=e^{-C_1/(\dt)^{1/10}}\leq \phi(h)$ for all $0<\dt\leq \ov{\dt}$ and, decreasing $\dt$ if necessary, the observability inequality \eqref{eq:obs_final} also holds for the function $\phi(\dt)$. The rest of the proof is the same. 
\end{proof}

\subsection{Proof of \Cref{thm:main_1}: controllability in $L^2$}
The proof of \Cref{thm:main_1} relies on \Cref{prop:control_hms}. The idea is to steer the solution to a small target in $H^{-1}\times H^{-2}$ and then let the solution evolve uncontrolled. Let $\phi$ be a function satisfying \eqref{eq:cond_dt} and set $\widetilde{\phi}(\dt)=\dt\,\phi(\dt)$. Notice that $\widetilde\phi$ also satisfies \eqref{eq:cond_dt} for a (possibly) different constant $C_1>0$.

Let us fix $T\in(0,1)$, the initial data $(u_0,v_0)\in [L^2(\Omega)]^2$ and the partition \eqref{eq:discr_points}. We choose some $T_0<T$ and set $M_0=\left \lfloor{\frac{T_0}{\dt}}\right \rfloor$. From \Cref{prop:control_hms}, there exists a time-discrete control $h_0=(h_0^{n+\frac12})_{n\in\inter{0,M_0-1}}$ with $\norme{h_0}_{L^2_{\sdmT}{(0,T;L^2{\omega})}}\leq C_{T_0}\norme{(u_0,v_0)}_{[L^2(\Omega)]^2}$ and such that the solution to 
\begin{equation}\label{eq:control_h0}
\begin{cases}
\D \frac{u^{n+1}-u^{n}}{\dt}-\Gamma \difx^2 u^{n+1}+c\difx u^{n+1}={ v^{n+1}} &n\in\inter{0,M-1}, \\
\D \frac{v^{n+1}-v^{n}}{\dt}+\gamma \difx^4 v^{n+1}+\difx^3 v^{n+1}+a\difx^2 v^{n+1}={ u^{n+1}}+\chi_{\omega} h_0^{n+\frac12} &n\in\inter{0,M-1}, \\
\left(v_{|\partial\Omega}\right)^{n+1}=\left(v_{|\partial\Omega}\right)^{n+1}=\left(\difx v_{|\partial\Omega}\right)^{n+1} =0 &n\in\inter{0,M-1}, \\
u^0=u_0, \quad v^0=v_0,
\end{cases}
\end{equation}
verifies
\begin{equation}\label{eq:bound_M0}
\norme{(u^{M_0},v^{M_0})}_{H^{-1}(\Omega)\times H^{-2}(\Omega)}\leq C_{T_0}\sqrt{\widetilde \phi(\dt)}\norme{(u_0,v_0)}_{[L^2(\Omega)]^2},
\end{equation}
where $C_{T_0}$ is the observability constant corresponding to the interval $(0,T_0)$ as given in \Cref{eq:prop_obs}. This defines the state $(u^n,v^n)$ for all $n\in\inter{0,M_0}$.

Now, we set $h^{n+\frac12}\equiv 0$ for $n\in\inter{M_0,M-1}$ and consider the uncontrolled system 
\begin{equation}\label{eq:control_sin_h0}
\begin{cases}
\D \frac{u^{n+1}-u^{n}}{\dt}-\Gamma \difx^2 u^{n+1}+c\difx u^{n+1}={ v^{n+1}} &n\in\inter{M_0,M-1}, \\
\D \frac{v^{n+1}-v^{n}}{\dt}+\gamma \difx^4 v^{n+1}+\difx^3 v^{n+1}+a\difx^2 v^{n+1}={ u^{n+1}} &n\in\inter{M_0,M-1}, \\
\left(v_{|\partial\Omega}\right)^{n+1}=\left(v_{|\partial\Omega}\right)^{n+1}=\left(\difx v_{|\partial\Omega}\right)^{n+1} =0 &n\in\inter{M_0,M-1},
\end{cases}
\end{equation}
with initial data $y^{M_0}$ coming from the sequence \eqref{eq:control_h0}. Observe that for $n=M_0$, $(u^{M_0+1},v^{M_0+1})$ solves the elliptic system
\begin{equation*}
\begin{cases}
-\dt \,\Gamma \difx^2 u^{M_0+1}+\dt \,c\difx u^{M_0+1}+ u^{M_0+1}= \dt v^{M_0+1}+u^{M_0} &\text{in } (0,1), \\
\dt \gamma \difx^4 v^{M_0+1}+\dt \difx^3 v^{M_0+1}+\dt a \difx^2 v^{M_0+1}+v^{M_0+1}=\dt u^{M_0+1}+v^{M_0}  &\text{in } (0,1), \\
v^{M_0+1}=v^{M_0+1}=\difx v^{M_0+1} =0 &\text{on } \{0,1\}.
\end{cases}
\end{equation*}
In view of Lax-Milgram lemma, \eqref{eq:bound_M0} and a procedure similar to \eqref{lem:est_method}, we can deduce the regularity estimate
\begin{equation*}
\sqrt{\dt}\snorme{(u^{M_0+1},v^{M_0+1})}_{H_0^1(\Omega)\times H_0^2(\Omega)}\leq C \snorme{(u^{M_0},v^{M_0})}_{H^{-1}(\Omega)\times H^{-2}\times(\Omega)}
\end{equation*}
for all $\dt>0$ small enough and some $C>0$ only depending on $\Omega$, $\Gamma$, $\gamma$, $a$, and $c$. This, together with \eqref{eq:bound_M0} yields
\begin{equation*}
\snorme{(u^{M_0+1},v^{M_0+1})}_{H_0^1(\Omega)\times H_0^2(\Omega)} \leq C\sqrt{\frac{\widetilde{\phi}(\dt)}{\dt}}\norme{(u_0,v_0)}_{[L^2(\Omega)]^2}= C \sqrt{\phi(\dt)}\norme{(u_0,v_0)}_{[L^2(\Omega)]^2}.
\end{equation*}

Arguing as we did in \Cref{eq:prop_obs}, we can iterate for indices $n\in\inter{M_0+1,M}$ and  using Poincar\'e inequality we can deduce $\snorme{(u^{M},v^{M})}_{[L^2(\Omega)]^2}\leq C \sqrt{\phi(\dt)}\norme{(u_0,v_0)}_{[L^2(\Omega)]^2}$ for some constant depending on $T$ and $T_0$. Therefore, by means of the auxiliary problems \eqref{eq:control_h0} and \eqref{eq:control_sin_h0}, we have constructed a sequence $(u,v)=\{u^{n},v^{n}\}_{n\in\inter{0,M}}$ such that $(u^M,v^M)$ verifies a $\phi(\dt)$-null controllability constraint in $[L^2(\Omega)]^2$. This ends the proof.

\section{Concluding remarks}\label{sec:conc}
In this paper, we have studied the controllability of a time-discrete simplified stabilized Kuramoto-Sivashinsky equation. The main ingredient is a relaxed observability inequality which in turn is a consequence of time-discrete Carleman estimates with the same weight for the fourth- and second-order parabolic operators. Once this observability inequality is obtained, the controllability of the system is achieved in two steps: first we control to a small target in $H^{-1}\times H^{-2}$ and then we use some regularity results to deduce the controllability in $L^2$.

As we have mentioned in the introduction, two important simplifications have been done to obtain our results. The first one is that the system is linear. Although the controllability of time-discrete semilinear systems is in fact possible (see e.g. \cite{BLR14} in the semi-discrete case or \cite{BHS20} for the time-discrete one), the nonlinearities in such works are regarded as globally Lipschitz, which is not the case for the nonlinearity $uu_x$ in \eqref{ks_intro}. How to treat this type of nonlinearity in the discrete case remains as an open and interesting problem. 

The second simplification concerns the coupling terms in the right-hand side of \eqref{eq:ks_heat}. As compared to \eqref{ks_intro}, these couplings are easier to handle with the usual Carleman estimates and including them in our current framework require further investigation. One possible way to consider only first-order couplings is to argue as in \cite[Theorem 3.1]{CMP15}, where the authors prove a Carleman inequality for the parabolic operator $(L_{\sdmT}\difx p)$ (see definition \eqref{eq:parab_op} in our case). Such estimate can be deduced from the Carleman estimate with Fourier boundary conditions shown in \cite[Theorem 1.1]{FCGBGP06} whose proof relies on the classical duality method introduced in \cite{FI96}. A priori, it seems that this result can be deduced for the time-discrete case, but a close inspection to the proof of \cite[Theorem 1.1]{FCGBGP06} highlights some difficulties with our Carleman estimate in \Cref{lem:carleman_heat} (with the important change of Dirichlet to Neumann boundary conditions) and shows an incompatibility when considering the terms in $\mathcal W_{H}$ (particulartly the one at $n=\frac{1}{2}$). In any case, this problem deserves further attention.

\appendix

\section{Time-discrete calculus results}\label{app:discrete_things}

The goal of this section is to provide a summary of calculus rules for manipulating the time-discrete operators $\Dt$ and $\Dtbar$, and also to provide estimates for the application of such operators on the weight functions. 

To avoid introducing additional notation, we present the following continuous difference operator. For a function $f$ defined on $\R$, we set
\begin{align*}
&\tcp f(t):=f(t+\tfrac{\dt}{2}), \quad \tcm f(t):=f(t-\tfrac{\dt}{2}), \quad \Dtc f:=\frac{1}{\dt}(\tcp-\tcm)f.
\end{align*}
In this way, discrete versions of the results given below will be natural. With the notation given in the introduction, for a function $f$ continuously defined on $\R$, the discrete function $\Dt f$ amounts to evaluate $\Dtc f$ at the mesh points $\dmT$ and $\Dtbar f$ is $\Dtc f$ sampled at the mesh points $\mT$.

\subsection{Time-discrete calculus formulas}

\begin{lem}
Let the functions $f_1$ and $f_2$ be continuously defined over $\mathbb R$. We have
\begin{equation*}
\Dtc(f_1f_2)=\tcp{f_1}\,\Dtc f_2+\Dtc f_1\,\tcm{f_2}, \quad \Dtc(f_1f_2)=\tcm{f_1}\,\Dtc f_2+\Dtc f_1\,\tcp{f_2}.
\end{equation*}
From the above formulas, if $f_1=f_2=f$, we have the useful identities
\begin{align*}
&\tcp{f}\,\Dtc f=\frac{1}{2}\Dtc \left(f^2\right)+\frac{1}{2}\dt(\Dtc f)^2,\qquad \tcm{f}\,\Dtc f=\frac{1}{2}\Dtc\left(f^2\right)-\frac{1}{2}\dt(\Dtc f)^2.
\end{align*}
\end{lem}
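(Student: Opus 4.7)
The plan is to prove both pairs of identities by direct computation, starting from the defining formulas $\tcp f(t)=f(t+\dt/2)$, $\tcm f(t)=f(t-\dt/2)$ and $\Dtc f=\dt^{-1}(\tcp-\tcm)f$. No discretization-specific machinery is needed; the identities are essentially the telescoping Leibniz rule for a symmetric finite difference, so the whole argument will amount to adding and subtracting a single term.

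For the first product formula, I would expand
\begin{equation*}
\Dtc(f_1 f_2)(t) = \tfrac{1}{\dt}\bigl[f_1(t+\tfrac{\dt}{2})f_2(t+\tfrac{\dt}{2}) - f_1(t-\tfrac{\dt}{2})f_2(t-\tfrac{\dt}{2})\bigr]
\end{equation*}
and insert $\pm f_1(t+\tfrac{\dt}{2})f_2(t-\tfrac{\dt}{2})$ inside the bracket. Grouping the four terms into two differences immediately yields $\tcp f_1\,\Dtc f_2 + \Dtc f_1\,\tcm f_2$. The symmetric choice of auxiliary term $\pm f_1(t-\tfrac{\dt}{2})f_2(t+\tfrac{\dt}{2})$ gives the companion identity with $\tcp$ and $\tcm$ swapped on $f_1$.

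For the second pair, specialize $f_1=f_2=f$ in the two product formulas and subtract or add them. Alternatively, a direct shortcut is to observe $\tcp f - \tcm f = \dt\,\Dtc f$ and $(\tcp f)^2 - (\tcm f)^2 = \dt\,\Dtc(f^2)$, whence
\begin{equation*}
\tcp f\,\Dtc f = \tfrac{1}{2}(\tcp f+\tcm f)\,\Dtc f + \tfrac{1}{2}(\tcp f-\tcm f)\,\Dtc f = \tfrac{1}{2}\Dtc(f^2) + \tfrac{1}{2}\dt\,(\Dtc f)^2,
\end{equation*}
and the $\tcm f\,\Dtc f$ identity follows by exchanging the sign in the decomposition.

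There is no real obstacle here: the statement is a purely algebraic identity about the shift operators, and the only thing to be careful about is keeping track of which factor carries $\tcp$ and which carries $\tcm$ when inserting the auxiliary cross term. I would present the computation for the first identity in full, note that the second identity in the first display is obtained by the symmetric choice of cross term, and then derive the two squared-version identities either by specialization or by the shortcut above.
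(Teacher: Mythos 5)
Your proof is correct and is the standard direct computation; the paper itself states this lemma without proof, and your add-and-subtract argument for the product rules together with the symmetrization $\tcp f=\tfrac12(\tcp f+\tcm f)+\tfrac12(\tcp f-\tcm f)$ for the squared identities is exactly the computation the paper implicitly relies on.
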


The translation of the result to discrete functions $f,g_1,g_2\in H^{\overline{\dmT}}$ is 
\begin{equation}\label{deriv_prod}
\begin{split}
&\Dtbar(g_1g_2)=\taubp{g_1}\Dtbar g_2+\Dtbar g_1\taubm{g_2}, \quad \Dtbar(g_1g_2)=\taubm{g_1}\Dtbar g_2+\Dtbar g_1\taubp{g_2},
\end{split}
\end{equation}
and
\begin{align}\label{f_Dt_3}
&\taubp{f}\Dtbar f=\frac{1}{2}\Dtbar \left(f^2\right)+\frac{1}{2}\dt(\Dtbar f)^2, \quad \taubm{f}\Dtbar f=\frac{1}{2}\Dtbar\left(f^2\right)-\frac{1}{2}\dt(\Dtbar f)^2.
\end{align}
Of course, the above identities also hold for functions $f,g_1,g_2\in H^{\overline{\mT}}$ and their respective translation operators and difference operator $\mathtt{t^{\pm}}$ and $\Dt$. 

The following result covers discrete integration by parts and some useful related formulas.
\begin{prop}
Let $\{H,(\cdot,\cdot)_{H}\}$ be a real Hilbert space and consider $u\in H^{\overline{\mT}}$ and $v\in H^{\overline{\dmT}}$. We have the following:
\begin{align}\label{trans_doub}
&\dint_{0}^{T}\left(\tcp{u},v\right)_{H}=\int_{0}^{T}\left(u,\bar{\mathtt{t}}^{-}v\right)_{H}, \\
& \dint_{0}^T \left(\tcm u,v\right)_{H}=\dt(u^0,v^{\frac12})_H -\dt(u^M,v^{M+\frac12})_H+\int_{0}^{T}\left(u,\bar{\mathtt{t}}^{+}v\right)_H.\label{trans_doub2}
\end{align}
Moreover, combining the above identities, we have the following discrete integration by parts formula
\begin{equation}\label{int_by_parts}
\dint_{0}^{T} \left(D_t u,v\right)_{H}=-(u^0,v^{\frac12})_{H}+(u^M,v^{M+\frac12})_{H}-\int_{0}^{T}\left(\Dtbar v,u\right)_{H}.
\end{equation}
\end{prop}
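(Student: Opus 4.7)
The plan is to derive the three identities by a direct unfolding of the discrete integrals as finite sums, together with a relabeling of the summation index. No analytic argument is needed: everything reduces to accounting for boundary terms that appear when the range of summation is shifted. I would first expand both sides of \eqref{trans_doub} using \eqref{eq:int_time_primal} and \eqref{eq:int_time_dual}: the left-hand side becomes $\sum_{n=0}^{M-1}\dt\,(u^{n+1},v^{n+\frac12})_H$ while the right-hand side becomes $\sum_{n=1}^{M}\dt\,(u^n,v^{n-\frac12})_H$. The substitution $m=n+1$ identifies the two sums exactly, and no boundary contribution appears because both ranges have the same length $M$ and correspond to matching pairs $(u^{n+1},v^{n+\frac12})$.

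For \eqref{trans_doub2} the same expansion gives $\sum_{n=0}^{M-1}\dt\,(u^n,v^{n+\frac12})_H$ on the left and $\sum_{n=1}^{M}\dt\,(u^n,v^{n+\frac12})_H$ on the right. Now the ranges are shifted by one, and their difference accounts precisely for the endpoint terms: the $n=0$ summand $\dt(u^0,v^{\frac12})_H$ is missing from the right-hand sum, while the $n=M$ summand $\dt(u^M,v^{M+\frac12})_H$ is present there but not on the left. This is exactly how the two $\dt$-terms in \eqref{trans_doub2} are produced, and it is also where the hypothesis $v\in H^{\overline{\dmT}}$ is essential, since $\bar{\mathtt t}^+ v$ requires access to the extra node $v^{M+\frac12}$ lying outside $[0,T]$.

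Finally, for the integration-by-parts formula \eqref{int_by_parts} I would exploit the decomposition $D_t=\frac{1}{\dt}(\mathtt t^+ -\mathtt t^-)$ and apply the two identities just established to the two pieces. Subtracting \eqref{trans_doub2} from \eqref{trans_doub} after pairing with $v$ and dividing by $\dt$, the bulk terms combine into $-\tfrac{1}{\dt}\int_0^T(u,(\bar{\mathtt t}^+-\bar{\mathtt t}^-)v)_H = -\int_0^T(u,\Dtbar v)_H$, while the two $\dt$-endpoint contributions from \eqref{trans_doub2}, once divided by $\dt$, produce exactly $-(u^0,v^{\frac12})_H+(u^M,v^{M+\frac12})_H$. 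The only possible pitfall is orienting the signs correctly when subtracting the shifted identity, but no step is substantive beyond careful bookkeeping of indices and endpoints.
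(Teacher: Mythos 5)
Your proposal is correct: expanding the discrete integrals as finite sums, reindexing, and tracking the two endpoint mismatches is exactly the intended argument, and your sign bookkeeping in the passage from \eqref{trans_doub}--\eqref{trans_doub2} to \eqref{int_by_parts} checks out. The paper states this proposition without proof precisely because this direct computation is routine, so there is nothing to compare beyond noting that your use of the extra node $v^{M+\frac12}$ (available since $v\in H^{\overline{\dmT}}$) is the one point that genuinely needed to be made explicit, and you made it.
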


\begin{rmk}

 If we consider two functions $f,g\in H^{\overline{\dmT}}$, we can combine \eqref{trans_doub} and \eqref{int_by_parts} to obtain the formula
 \begin{equation}\label{by_parts_same}
 \int_{0}^{T}\left(\Dtbar f,\bar{\mathtt{t}}^{-}g\right)_{H}=-(f^{\frac12},g^{\frac12})_{H}+(f^{M+\frac12},g^{M+\frac12})_{H}-\int_{0}^{T}\left(\bar{\mathtt{t}}^{+} f,\Dtbar g\right)_{H}.
 \end{equation}
 Analogously, for $f,g\in H^{\overline{\mT}}$, the following holds
  \begin{equation}\label{by_parts_same_dual}
 \dint_{0}^{T}\left(D_t f,\tcp{g}\right)_{H}=-(f^{0},g^{0})_{H}+(f^{M},g^{M})_{H}-\dint_{0}^{T}\left(\tcm f,D_t g\right)_{H}.
 \end{equation}
Observe that in these formulas, the integrals are taken over the same discrete points.
\end{rmk}

\subsection{Time-discrete computations related to Carleman weights}\label{app:theta_comps}

We present some lemmas related to time-discrete operations applied to the Carleman weights. The proof of these results can be found in \cite[Appendix B]{BHS20}. We recall that $r=e^{s\varphi}$ and $\rho=r^{-1}$. We highlight the dependence on $\tau$, $\delta$, $\dt$ and $\lambda$ in the following estimates.

\begin{lem}[Time-discrete derivative of the Carleman weight]\label{lem:deriv_lemma_time}
Let $T\in(0,1)$ and $\tau\geq 1$. Provided $\dt \tau (\delta^{m+1}T^{2m+1})^{-1} \leq 1$, we have
\begin{equation*}
\tcm{(r)} \Dtc \rho=-\tau\,\tcm{(\theta^\prime)}\varphi +\dt\left(\frac{\tau}{\delta^{m+2}T^{2m+2}}+\frac{\tau^2}{\delta ^{2m+2}T^{4m+2}}\right)\mathcal O_{\lambda}(1).
\end{equation*}
\end{lem}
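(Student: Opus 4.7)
The plan is to collapse the product $\tcm(r)\Dtc\rho$ into a single exponential of the relevant time-increment and then Taylor-expand, using the smallness hypothesis to control the remainder.

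Since $\rho = r^{-1}$ and $\varphi$ does not depend on $t$, I write
\[
\tcm(r)\Dtc\rho = \frac{\tcm(r)}{\dt}\bigl(\tcp\rho - \tcm\rho\bigr) = \frac{1}{\dt}\Bigl[e^{-\bigl(s(t+\tfrac{\dt}{2})-s(t-\tfrac{\dt}{2})\bigr)\varphi} - 1\Bigr] = \frac{1}{\dt}\Bigl[e^{-\dt\,\tau\varphi\,\Dtc\theta} - 1\Bigr],
\]
where I used that $s = \tau\theta$ and the definition of $\Dtc$.

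Next I control the exponent. Starting from the explicit form \eqref{eq:def_theta}, one has $|\theta'|\leq CT\theta^{1+1/m}$ and $\max_{[0,T]}\theta \leq (\delta^m T^{2m})^{-1}$, so $|\Dtc\theta|\leq C\delta^{-m-1}T^{-2m-1}$. Under the standing hypothesis $\dt\,\tau(\delta^{m+1}T^{2m+1})^{-1}\leq 1$, the exponent $y := -\dt\,\tau\varphi\,\Dtc\theta$ is uniformly bounded, $|y|\leq C_\lambda$, and the second-order Taylor expansion of the exponential gives
\[
\frac{e^y - 1}{\dt} = -\tau\varphi\,\Dtc\theta + \dt\,\tau^2\varphi^2(\Dtc\theta)^2\,\mathcal O_\lambda(1) = -\tau\varphi\,\Dtc\theta + \dt\,\frac{\tau^2}{\delta^{2m+2}T^{4m+2}}\,\mathcal O_\lambda(1),
\]
which already accounts for the second remainder in the statement after squaring the bound on $\Dtc\theta$ and absorbing $\varphi$ into $\mathcal O_\lambda(1)$.

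Finally I replace $\Dtc\theta$ by the shifted derivative $\tcm(\theta')$. A first-order Taylor expansion of $\theta$ at the point $t-\tfrac{\dt}{2}$ yields
\[
\Dtc\theta(t) = \tcm(\theta')(t) + \tfrac{\dt}{2}\tcm(\theta'')(t) + \dt^2\,\mathcal O(\|\theta'''\|_\infty),
\]
and a direct differentiation of \eqref{eq:def_theta} gives $\|\theta''\|_\infty \leq C\delta^{-m-2}T^{-2m-2}$ (and the same kind of bound, one order weaker in $T$ and $\delta$, for $\theta'''$). Multiplying by $-\tau\varphi$ and reabsorbing $\varphi$ into the implicit constant supplies exactly the first remainder $\dt\,\tau(\delta^{m+2}T^{2m+2})^{-1}\mathcal O_\lambda(1)$ of the statement, and the proof is concluded by summing the two error terms. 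The only real work here is the arithmetic of the explicit powers of $\delta$ and $T$ coming out of the successive derivatives of $\theta$; once one is careful with these and with the $\lambda$-dependence hidden in $\varphi$, there is no genuine obstruction.
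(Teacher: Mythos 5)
Your argument is correct and follows essentially the same route as the paper's proof (which defers to Lemma B.4 of the cited reference): a Taylor expansion at order $2$ combined with the bounds $\max_{[0,T]}|\theta^{(j)}|\leq C_m\,\delta^{-(m+j)}T^{-(2m+j)}$ for $j=1,2$, with the standing smallness hypothesis used to keep the exponential factor of size $\mathcal O_{\lambda}(1)$. The only cosmetic differences are that you expand $e^{y}-1$ in the increment $y=-\dt\,\tau\varphi\,\Dtc\theta$ rather than expanding $\rho$ directly in $t$, and that using the Lagrange form of the remainder in your last step would let you dispense with $\theta'''$ entirely (the $\dt^{2}\tau\|\theta'''\|_\infty$ term you produce is still absorbable into the first remainder via $\dt\,\tau\leq\delta^{m+1}T^{2m+1}$, so nothing is lost).
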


\begin{proof} The proof of this result can be done exactly as in \cite[Lemma B.4]{BHS20}. It relies on Taylor's formula at order 2 and the estimate $\max_{t\in[0,T]} \theta^{(j)}(t) \leq \frac{C_{m}}{\delta^{m+j}T^{2m+j}}$, $j=1,2$.
\end{proof}

\begin{lem}[Discrete operations on the weight $\theta$]\label{lemma_deriv_theta}
 There exists a universal constant $C_{\ell,m}>0$ uniform with respect to $\dt$, $\delta$ and $T$ such that
\begin{enumerate}[label=\upshape(\roman*),ref=\thelem(\roman*)]
\item \label{est_dt_square}
$ |\Dtc (\theta^\ell)| \leq C_{\ell,m} T\, \tcm{(\theta^{\ell+\frac{1}{m}})} + C_{\ell,m} \frac{\dt}{\delta^{m\ell+2}T^{2m\ell+2}}  , \quad \ell=1,2,\ldots$
%
\item \label{shift} 
$\tcm(\theta^\ell)\leq \tcp(\theta^\ell)+C_{\ell,m}\frac{\dt}{\delta^{m\ell+1}T^{2m\ell+1}}, \quad \ell=1,2, \ldots$
\end{enumerate}
\end{lem}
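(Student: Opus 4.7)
The plan is to prove both estimates by reducing them to Taylor expansions of $\theta^\ell$ on intervals of length $\dt$, and then plugging in pointwise bounds on the first and (for (i)) second derivatives of $\theta$.

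As a preliminary step, I would compute the derivatives of $\theta$ in closed form. Writing $\theta^{1/m}=[(t+\delta T)(T+\delta T-t)]^{-1}$ and differentiating yields
\begin{equation*}
\theta'(t)=-m(T-2t)\,\theta(t)^{1+1/m},
\end{equation*}
so that $|\theta'(t)|\leq mT\,\theta(t)^{1+1/m}$. A second differentiation gives $\theta''$ as a linear combination of $\theta^{1+1/m}$ and $(T-2t)^2\theta^{1+2/m}$, and then from the uniform bound $\theta(t)\leq C/(\delta^m T^{2m})$ on $[0,T]$ (and in fact slightly beyond, which covers the extra node $t_{M+1/2}$) one easily derives $|\theta''(t)|\leq C_m/(\delta^{m+2}T^{2m+2})$. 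Differentiating $f=\theta^\ell$ by the chain rule then yields
\begin{equation*}
|f'(t)|\leq \ell mT\,\theta(t)^{\ell+1/m},\qquad |f''(t)|\leq C_{\ell,m}\,\bigl(T^2\theta^{\ell+2/m}+\theta^{\ell-1}/(\delta^{m+2}T^{2m+2})\bigr),
\end{equation*}
and using $\theta\leq C/(\delta^m T^{2m})$ the second derivative is bounded by $C_{\ell,m}/(\delta^{m\ell+2}T^{2m\ell+2})$.

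For part (i), I would apply Taylor's formula at order $2$ to $f=\theta^\ell$ around the point $t-\dt/2$:
\begin{equation*}
f(t+\tfrac{\dt}{2})=f(t-\tfrac{\dt}{2})+\dt\, f'(t-\tfrac{\dt}{2})+\tfrac{\dt^2}{2}f''(\xi)
\end{equation*}
for some $\xi\in(t-\dt/2,t+\dt/2)$. Dividing by $\dt$ gives $\Dtc f(t)=f'(t-\dt/2)+\tfrac{\dt}{2}f''(\xi)$. The first term is precisely bounded by $\ell m T\,\tcm(\theta^{\ell+1/m})$, while the second is bounded by $C_{\ell,m}\dt/(\delta^{m\ell+2}T^{2m\ell+2})$ by the estimate on $|f''|$, which is exactly the announced form.

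For part (ii), I would use Taylor's formula at order $1$: $\theta^\ell(t-\dt/2)=\theta^\ell(t+\dt/2)-\dt\,(\theta^\ell)'(\xi)$ for some intermediate $\xi$, so
\begin{equation*}
\tcm(\theta^\ell)(t)\leq \tcp(\theta^\ell)(t)+\dt\,|(\theta^\ell)'(\xi)|.
\end{equation*}
Combining $|(\theta^\ell)'|\leq \ell mT\,\theta^{\ell+1/m}$ with the global bound $\theta\leq C/(\delta^m T^{2m})$ converts the remainder into $C_{\ell,m}\dt/(\delta^{m\ell+1}T^{2m\ell+1})$, which is the claimed inequality.

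There is no real obstacle: both estimates are consequences of Taylor's formula and the explicit form of $\theta$. The only point demanding care is tracking the correct powers of $\delta$ and $T$ when passing from $\theta^{\ell+j/m}$ to the universal upper bounds, and checking that the evaluation points $t\pm\dt/2$ and the intermediate $\xi$ lie in the extended interval on which $\theta$ is smooth; the parameter $\delta>0$ is precisely what guarantees this, since it keeps $\theta$ bounded and $C^\infty$ in a neighbourhood of $[0,T]$ containing the dual mesh point $t_{M+1/2}$.
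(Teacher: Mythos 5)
Your proposal is correct and follows essentially the same route as the paper: the bound $\max_t|\partial_t^j(\theta^\ell)|\leq C_{\ell,m,j}\,\delta^{-(m\ell+j)}T^{-(2m\ell+j)}$ combined with Taylor's formula at order $2$ for (i) (expanding around $t-\dt/2$ so the first-order term produces the $T\,\tcm(\theta^{\ell+\frac{1}{m}})$ contribution) and at order $1$ for (ii). The explicit computation $\theta'=-m(T-2t)\theta^{1+1/m}$ and the bookkeeping of the powers of $\delta$ and $T$ are exactly the details the paper delegates to the cited reference, and they check out.
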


\begin{proof} The proof of this result can be done exactly as in \cite[Lemma B.5]{BHS20}. From the estimate $\max_{t\in[0,T]}\partial_t^{j}(\theta^\ell)\leq \frac{C_{\ell,m,j}}{\delta^{m\ell+j}T^{2m\ell+j}}$, for $j,\ell=1,2$, inequality (i) follows for $j=2$ and Taylor's formula at order 2. Similarly, (ii) follows from such estimate at $j=1$ and Taylor's formula at order 1.
\end{proof}

\begin{rmk}\label{rmk:shifts_equiv} As it will be of interest during the proof of \Cref{prop:car_one_obs}, \Cref{lemma_deriv_theta} is also valid (for a possibly different universal constant) if we replace $\tcm$ by $\tcp$ and $\tcp$ by $\tcm$  everywhere. 
\end{rmk}

\section{Carleman estimate for the time-discrete fourth-order parabolic operator}\label{app:car_fourth}

We devote this section to present the proof of \Cref{thm:time_discrete_fourth}. For clarity, we have divided the proof in several steps and we mainly focus in those containing time-discrete computations. 

As in other works addressing Carleman estimates, we will keep track of the dependency of all the constants with respect to the parameters $\lambda$, $\tau$ and $T$. Also, in accordance with the nature of our problem, we pay special attention to the discrete parameters $\dt$ and $\delta$.

\subsection{The change of variable}
Without loss of generality, let $\gamma=1$ and assume for the time being that $q\in (C^4([0,1])\cap H_0^2(0,1))^{\ov{\sdmT}}$. Recalling \eqref{eq:def_varphi} and \eqref{eq:def_theta}, we introduce the following instrumental functions
\begin{align*}
s(t)=\tau\theta(t), \quad \tau>0, \quad t\in(-\delta T,T+\delta T), \\
r(t,x)=e^{s(t)\varphi(x)}, \quad \rho(t,x)=(r(t,x))^{-1}, \quad x\in [0,1], \quad t\in(-\delta T,T+\delta T),
\end{align*}
and set the change of variables
\begin{equation}\label{eq:change_var}
z^{n+\frac{1}{2}}=r(t_{n+\frac{1}{2}},\cdot) q^{n+\frac12}, \quad n\in\inter{0,M}.
\end{equation}
For the remainder of this section, we will simplify the notation in \eqref{eq:change_var} by simply writing $z=rq$ which implicitly means that the (continuous) weight function $r$ is evaluated on the same time grid (in this case the dual grid $\dmT$) and at the same time point as the one attached to the discrete variable. This will not lead to any ambiguity.

From the change of variables \eqref{eq:change_var} and since $\beta\in C^4([0,1])$ and $q\in C^4([0,1])^{\ov{\sdmT}}$, we can obtain --after a very long, but straightforward computation-- that
\begin{equation}\label{eq:change_dxxxx}
r \partial_x^4(\rho z)= \mathcal {LT}+ \mathcal{RT}
\end{equation}
where 
\begin{align*}\notag 
\mathcal{LT}:=&-4 s^3\lambda^3 (\beta_x)^3\phi^3  \difx z+s^4\lambda^4 (\beta_x)^4\phi^4  z+6s^2\lambda^2 (\beta_x)^2\phi^2  \difx^2 z \\
& -4 s\lambda\beta_x \phi  \difx^3 z + \difx^4 z+6 s^2 \lambda^2 \left((\beta_x)^2\phi^2\right)_x  \difx z
\end{align*}
and
\begin{align*}\notag 
\mathcal{RT}:=&-s \lambda \beta^{(4)}\phi  z-4s\lambda^2 \beta_x \beta_{xxx}\phi  z+4 s^2\lambda^2\beta_{xxx}\beta_{x}\phi^2  z-4 s\lambda \beta_{xxx}\phi  \difx z-3 s\lambda^2(\beta_{xx})^2\phi  z \\ \notag
&-6 s \lambda^3 (\beta_x)^2\beta_{xx}\phi  z+18 s^2\lambda^3 (\beta_x)^2 \beta_{xx}\phi^2  z + 3 s^2\lambda^2 (\beta_{xx})^2\phi^2  z-6\lambda^3 s^3 (\beta_x)^2\beta_{xx}\phi^3  z \\ \notag
&-6s \lambda \beta_{xx}\phi  \difx^2 z-s\lambda^4(\beta_x)^4 \phi  z+7 s^2\lambda^4 (\beta_x)^4 \phi^2  z - 6 s^3\lambda^4 (\beta_x)^4 \phi^3  z - 6 s\lambda^2 (\beta_x)^2 \phi  \difx^2 z \\
&-12 s\lambda^2 \beta_x \beta_{xx}\phi  \difx z-4 s \lambda^3 (\beta_x)^3 \phi  \difx z.
\end{align*}

On the other hand, using \eqref{deriv_prod}, we have
\begin{equation*}
-\Dtbar(\rho z)=-\taubm{\rho}\Dtbar z-\Dtbar\rho \taubp{z}.
\end{equation*}
Moreover, multiplying the above equation by $\taubm{r}$ and taking into account \Cref{lem:deriv_lemma_time}, we get
\begin{equation*}
-\taubm{r}\Dtbar(\rho z)=-\Dtbar z +\tau \varphi \taubm{\theta^\prime}\taubp{z}-\mathcal O_{\lambda}(1)\frac{\dt \tau^2}{\delta^{2m+2}T^{4m+2}}\taubp{z},
\end{equation*}
where we have used that $\tau\geq 1$ and $T\in(0,1)$ to simplify in the last term. From here, using that $\dt \Dtbar{z}=\taubp{z}-\taubm{z}$, we get
\begin{equation}\label{eq:change_Dtbar}
-\taubm{r}\Dtbar(\rho z)=-\Dtbar z + \tau \varphi\,\tbm(\theta^\prime z)+\dt \tau\varphi \taubm{\theta^\prime}\Dtbar{z}-\mathcal O_{\lambda}(1)\frac{\dt \tau^2}{\delta^{2m+2}T^{4m+2}}\taubp{z}.
\end{equation}

Since $q=\rho z$ and both $z$ and $q$ are naturally attached to the dual time grid $\dmT$, we can see from \eqref{eq:4th_order_op} that
\begin{equation}\label{eq:change_var_eq}
-\taubm{r}\Dtbar(\rho z)+\taubm{r}\difx^4\tbm(\rho z)= \taubm{r}(P_{\sdmT} q)
\end{equation}
thus, putting together \eqref{eq:change_Dtbar}, \eqref{eq:change_dxxxx} (after an application of the discrete shift $\tbm$) and  \eqref{eq:change_var_eq}, we can conveniently write the Carleman identity
\begin{equation}\label{eq:def_car_identity}
Az+Bz=g,
\end{equation}
where $Az=\sum_{i=1}^4 A_i z$, $Bz=\sum_{i=1}^{3}{B_i} z$,
\begin{align}\notag 
g&= \taubm{r}(P_{\sdmT}q)-\tbm(\mathcal RT)-\tau \varphi\,\tbm(\theta^\prime z) \\ \label{def:g_term}
&\quad - \dt \tau \varphi \taubm{\theta^\prime}\Dtbar z +\mathcal O_{\lambda}(1)\frac{\dt \tau^2}{\delta^{2m+2}T^{4m+2}}\taubp{z},
\end{align}
and with
\begin{align}\notag 
&A_1 z= 6 \tau^2 \taubm{\theta}^2 \lambda^2 (\beta_x)^2 \phi^2 \difx^2\taubm{z}, \quad A_2 z=\tau^4\taubm{\theta}^4\lambda^4 (\beta_x)^4 \phi^4 \taubm{z}, \\ \label{eq:A}
& A_3 z=\difx^4\taubm{z}, \quad A_4 z=6 \tau^2\taubm{\theta}^2 \lambda^2 \left((\beta_x)^2\phi^2\right)_x \difx\taubm{z},
\end{align}
and
\begin{align}\label{eq:B}
B_1z=-\Dtbar z, \quad B_2 z=-4\tau^3\taubm{\theta}^3\lambda^3 (\beta_x)^3 \phi^3 \difx\taubm{z}, \quad B_3z=-4 \tau \taubm{\theta}\lambda \beta_x \phi \difx^3 \taubm{z}.
\end{align}
At this point, we have used that $s=\tau\theta$.

From \eqref{eq:def_car_identity}, we readily identify the identity sought in the classical methodology developed in \cite{FI96}. The splitting we have used here is inspired by the work \cite{CMP15}. Comparing to the identity shown there, we see that we have two additional terms in the right hand side of \eqref{eq:def_car_identity} with a factor $\dt$ in front and, of course, we have the time discrete derivative $\Dtbar z$ instead of the continuous one in the term $B_1 z$. 

\subsection{Estimate of the cross product}
Notice that equality \eqref{eq:def_car_identity} is written in the primal mesh $\mT$. Thus, following the classical method, we take the $L^2_{\smT}$-norm on both sides, which yields
\begin{equation}\label{eq:carleman_eq}
\norme{Az}^2_{L^2_{\smT}(Q)}+\norme{Bz}^2_{L^2_{\smT}(Q)}+2\left(Az,Bz\right)_{L^2_{\smT}(Q)}=\norme{g}^2_{L^2_{\smT}(Q)}.
\end{equation}
In this part, we will dedicate to estimate the term $\left(Az,Bz\right)_{L^2_{\smT}(Q)}$. Developing further, we set $I_{ij}:=(A_i z,B_j z)_{L^2_{\smT}(Q)}$.

\subsubsection{Estimates that do not involve time-discrete operations}\label{sec:cont_terms}

As it can be noticed, some of the terms in the cross product $\left(Az,Bz\right)_{L^2_{\smT}(Q)}$ do not require time-discrete computations. Indeed, only $I_{11}$, $I_{21}$, $I_{31}$ and $I_{41}$ require a time-discrete integration by parts. All of the others, can be carried out with a standard integration by parts in the space variable and just bearing in mind the notation for the shift $\tbm$. To keep this step short, we only present the final result for such terms.

Employing the notation $s(t)=\tau\theta(t)$, we have
\begingroup
\allowdisplaybreaks
\begin{align*}
& \bullet\  I_{12}=60\dbint_{Q}\taubm{s}^5\lambda^5(\beta_x)^4\beta_{xx}\phi^5 |\difx\taubm{z}|^2 + 60 \dbint_{Q}\taubm{s}^5\lambda^6 (\beta_x)^6 \phi^5 |\difx\taubm{z}|^2, \\
 &\bullet\ I_{13}=-12 \left. \int_0^{T}\taubm{s}^3\lambda^3(\beta_x)^3\phi^3 |\difx^2\taubm{z}|^2 \ \right|_{0}^{1}+36 \dbint_{Q} \taubm{s}^3\lambda^3(\beta_x)^2 \beta_{xx}\phi^3 |\difx^2\taubm{z}|^2 \\
 &\qquad \qquad +36 \dbint_{Q}\taubm{s}^3\lambda^4(\beta_x)^4\phi^3|\difx^2\taubm{z}|^2, \\
 &\bullet\ I_{22}=14 \dbint_{Q} \taubm{s}^7 \lambda^7 (\beta_x)^6 \beta_{xx}\phi^7 \taubm{z}^2+14 \dbint_{Q} \taubm{s}^7 \lambda^8 (\beta_{x})^8 \phi^7 \taubm{z}^2, \\
 &\bullet \ I_{23}=-28\dbint_{Q}\taubm{s}^5\lambda^6 (\beta_x)^6\phi^5|\difx\taubm{z}|^2-30\dbint_{Q}\taubm{s}^5\lambda^5(\beta_x)^4\beta_{x}\phi^5|\difx\taubm{z}|^2 \\
 &\qquad\qquad + 2 \dbint_{Q}\taubm{s}^5\lambda^5\left(\left(\beta_x\right)^5\phi^5\right)_{xxx}\taubm{z}^2, \\
& \bullet \ I_{32}=-6\dbint_{Q} \taubm{s}^3\lambda^3 (\beta_x)^2\beta_{xx} \phi^3 |\difx^2\taubm{z}|^2-6 \dbint_{Q} \taubm{s}^3\lambda^4 (\beta_x)^4 \phi^3 |\difx^2\taubm{z}|^2 \\
& \qquad \qquad +2 \left. \int_0^{T}\taubm{s}^3\lambda^3(\beta_x)^3\phi^3 |\difx^2\taubm{z}|^2 \ \right|_{0}^{1}, \\
& \bullet \ I_{33}=  - 2 \left. \int_0^{T}\taubm{s}\lambda \beta_x \phi |\difx^3\taubm{z}|^2 \ \right|_{0}^{1} + 2 \dbint_{Q} \taubm{s}\lambda \beta_{xx}\phi |\difx^3\taubm{z}|^2 \\
& \qquad \qquad +2 \dbint_{Q} \taubm{s}\lambda^2 (\beta_x)^2 \phi |\difx^3\taubm{z}|^2, \\
& \bullet \ I_{42} = -48 \dbint_{Q}\taubm{s}^5 \lambda^5 (\beta_x)^4 \beta_{xx} \phi^5 |\difx\taubm{z}|^2-48 \dbint_{Q}\taubm{s}^5\lambda^6 (\beta_x)^6\phi^5 |\difx\taubm{z}|^2, \\
&\bullet \ I_{43}= 48 \dbint_{Q} \taubm{s}^3\lambda^4 (\beta_x)^4 \phi^3 |\difx^2\taubm{z}|^2+48 \dbint_{Q}\taubm{s}\lambda^3(\beta_x)^2\beta_{xx}\phi^3 |\difx^2\taubm{z}|^2 \\ 
&\qquad\qquad- 12 \dbint_{Q} \taubm{s}^3\lambda^3\left[(\beta_x)^2\phi \left((\beta_x)^2\phi^2\right)_x\right]_{xx} |\difx \taubm{z}|^2.
\end{align*}
\endgroup

Adding up all the boundary terms above and using \eqref{trans_doub} with  property \eqref{eq:positive_beta}, we see that 
\begin{equation*}
-10 \, \dint_{0}^{T} {s}^{3}\lambda^3\left(\beta_x\right)^3\phi^3 |\difx^2{z}|^2 \Bigg|_0^1  - 2 \, \dint_0^{T}  s \lambda \beta_x  \phi |\difx^3 z |^2 \Bigg|_0^1 \geq 0,
\end{equation*}
thus we shall drop them hereinafter. 

\subsubsection{Estimates involving time-discrete computations}\label{sec:discr_terms} 

\textit{- Estimate of $I_{11}$}. This is the most delicate estimate. Integrating by parts in space we have
\begin{align*}
I_{11}
&= 6 \dbint_{Q} \tau^2 \taubm{\theta}^2 \lambda^2 \left( (\beta_x)^2\phi^2\right)_x \difx \taubm{z}\Dtbar z + 6 \dbint_{Q} \tau^2 \taubm{\theta}^2 \lambda^2  (\beta_x)^2\phi^2 \difx \taubm{z}\Dtbar(\difx z) \\
&=: I_{11}^{(1)}+I_{11}^{(2)}.
\end{align*}
We keep $I_{11}^{(1)}$ since it will be canceled in a subsequent step. For the second term, we have using formula \eqref{f_Dt_3} and then integrating in time that
\begin{align*}
I_{11}^{(2)}&= 3 \dbint_{Q} \tau^2 \taubm{\theta}^2 \lambda^2 (\beta_x)^2 \phi^2 \Dtbar[(\difx z)^2] - 3 \dt \dbint_{Q} \tau^2 \taubm{\theta}^2 \lambda^2 (\beta_x)^2 \phi^2 \left(\Dtbar \difx z\right)^2 \\
&= 3 \int_0^{1} \tau^2 (\theta^{M+\frac{1}{2}})^2 \lambda^2 (\beta_x)^2 \phi^2 |\difx z^{M+\frac{1}{2}}|^2 - 3 \int_0^{1} \tau^2 (\theta^{\frac{1}{2}})^2 \lambda^2 (\beta_x)^2 \phi^2 |\difx z^{\frac{1}{2}}|^2 \\
&\quad - \underbrace{\dbint_{Q} \tau^2 (\Dtbar \theta^2) \lambda^2 \phi^2 |\difx \taubp{z}|^2}_{:= \mathcal P} - \underbrace{3 \dt \dbint_{Q} \tau^2 \taubm{\theta}^2 \lambda^2 (\beta_x)^2 \phi^2 \left(\Dtbar \difx z\right)^2}_{:= \mathcal  Q}.
\end{align*}
We observe that the first two terms above have a prescribed sign. Note that the same is true for the term $\mathcal P$, nonetheless has the bad sign and cannot be dropped. 

Let us find a bound for $\mathcal P$. Using Lemma \ref{est_dt_square} with $\ell=2$ we have
\begin{align}
\left|\mathcal P\right| \leq C\dbint_{Q}\left\{ T\taubp{\theta}^{2+{1}/{m}}+\frac{\dt}{\delta^{2m+2}T^{4m+2}}\right\}\tau^2 \lambda^2 \left(\beta_x\right)^2 \phi^2 |\difx\taubp{z}|^2.
\end{align}
Using the definitions of $\tbm$ and $\tbp$, we can shift the indices in the above equation to deduce
\begin{align}\notag
|\mathcal P| & \leq C\dbint_{Q}\left\{ T\taubm{\theta}^{2+{1}/{m}}+\frac{\dt}{\delta^{2m+2}T^{4m+2}}\right\}\tau^2 \lambda^2 \left(\beta_x\right)^2 \phi^2 |\difx\taubm{z}|^2 \\ \notag
&\quad + C \int_{0}^{1} \left\{T(\theta^{M+\frac12})^{2+{1}/{m}}+\frac{\dt}{\delta^{2m+2}T^{4m+2}}\right\} \tau^2 \lambda^2 \left(\beta_x\right)^2 \phi^2 |\difx z^{M+\frac12}|^2 \\
&\quad -  C \int_{0}^{1} \left\{T(\theta^{\frac12})^{2+{1}/{m}}+\frac{\dt}{\delta^{2m+2}T^{4m+2}}\right\} \tau^2 \lambda^2 \left(\beta_x\right)^2 \phi^2 |\difx z^{\frac12}|^2. \label{eq:est_P}
\end{align}

Now, we turn our attention to $\mathcal Q$. Noting that $\Dtbar$ and $\difx$ commute, we can integrate by parts in the space variable to get
\begin{align*}
\mathcal Q&= 3\dt \dbint_{Q} \tau^2 \taubm{\theta}^2\lambda^2 (\beta_x)^2 \phi^2 \left(\Dtbar z\right) \left(\Dtbar \difx^2 z\right) \\
&\quad + 3\dt \dbint_{Q} \tau^2 \taubm{\theta}^2\lambda^2 \left( (\beta_x)^2 \phi^2\right)_{x} (\Dtbar z) (\Dtbar\difx z)=: \mathcal Q_1+\mathcal Q_2.
\end{align*}
We observe that there are not boundary terms since $q\in (H_0^2(0,1))^{\ov{\sdmT}}$ and thus $z^{n-\frac{1}{2}}=\difx z^{n-\frac{1}{2}}=0$  for $n\in\inter{1,M+1}$. Using Cauchy-Schwarz and Young inequalities, we get
\begin{equation*}
|\mathcal Q_1| \leq \vartheta \dt \dbint_{Q} (\Dtbar \difx^2 z)^2 + \frac{C}{\vartheta} \dt \dbint_{Q} \tau^4 \taubm{\theta}^4 \lambda^4 (\beta_x)^4 \phi^4 (\Dtbar z)^2
\end{equation*}
for any $0<\vartheta<1$ and some $C>0$ uniform with respect to $\dt$. For $\mathcal Q_2$, we have after integration by parts in space
\begin{equation*}
\mathcal Q_2=-\frac{3 \dt}{2} \dbint_{Q} \tau^2 \taubm{\theta}^2 \lambda^2 \left((\beta_x)^2\phi^2\right)_{xx} (\Dtbar z)^2.
\end{equation*}
Once again, we do not have boundary terms since $q\in (H_0^2(0,1))^{\ov{\sdmT}}$.

Overall, collecting the above terms, we have
\begin{align}\notag
I_{11}&\geq I_{11}^{(1)} - C \dbint_{Q} T\taubm{\theta}^{2+{1}/{m}}\tau^2 \lambda^2 \left(\beta_x\right)^2 \phi^2 |\difx\taubm{z}|^2 -  \vartheta \dt  \dbint_{Q} \left(\Dtbar \difx^2 z\right)^2
 \\
&\quad - C X_{11}- C\left(1+\frac{1}{\vartheta}\right) Y_{11}- C W_{11},  \label{eq:I11_final}
\end{align}
for any $0<\vartheta<1$ and some constant $C>0$, with
\begin{align*}
X_{11}&:= \frac{\dt}{\delta^{2m+2}T^{4m+2}} \dbint_{Q} \tau^2 \lambda^2 \left(\beta_x\right)^2 \phi^2 |\difx\taubm{z}|^2 
\\
Y_{11}&:= \dt \dbint_{Q} \tau^2 \taubm{\theta}^2 \lambda^2 \left|\left((\beta_x)^2\phi^2\right)_{xx}\right| \left(\Dtbar z\right)^2 + \dt \dbint_{Q} \tau^4 \taubm{\theta}^4 \lambda^4 (\beta_x)^4 \phi^4 (\Dtbar z)^2
\\
W_{11}&:=  \int_0^{1} \tau^2 (\theta^{\frac{1}{2}})^2 \lambda^2 (\beta_x)^2 \phi^2 |\difx z^{\frac{1}{2}}|^2 +  \int_{0}^{1} \left\{T(\theta^{M+\frac12})^{2+{1}/{m}}+\frac{\dt}{\delta^{2m+2}T^{4m+2}}\right\} \tau^2 \lambda^2 \left(\beta_x\right)^2 \phi^2 |\difx z^{M+\frac12}|^2.
\end{align*}

\textit{- Estimate of $I_{21}$}. Using formula \eqref{f_Dt_3} we have
\begin{align}\notag
I_{21}&=-\dbint_{Q}\tau^4\taubm{\theta}^4 \lambda^4 (\beta_x)^4\phi^4 \taubm{z}\Dtbar z\\ \notag
&=-\frac{1}{2}\dbint_{Q} \tau^4\taubm{\theta}^4 \lambda^4 (\beta_x)^4 \phi^4 \Dtbar(z^2)+\frac{\dt}{2}\dbint_{Q}\tau^4\taubm{\theta}^4\lambda^4 (\beta_x)^4 \phi^4(\Dtbar z)^2\\ \label{eq:est_init_I21}
&=: I_{21}^{(1)}+I_{21}^{(2)}.
\end{align}

We notice that $I_{21}^{(2)}$ is positive. On the other hand, discrete integration by parts in the first term yields
\begin{align}\notag 
I_{21}^{(1)}&=\frac{1}{2}\dbint_{Q}\Dtbar(\theta^4)\tau^4\lambda^4 (\beta_x)^4 \phi^4 \taubp{z}^2 - \frac{1}{2}\int_0^{1}\tau^4(\theta^{M+\frac{1}{2}})^{4}\lambda^4 (\beta_x)^4 \phi^4 |z^{M+\frac{1}{2}}|^2 \\ \label{eq:est_inter_I21}
&\quad + \frac{1}{2}\int_0^1 \tau^4(\theta^{\frac{1}{2}})^{4}\lambda^4 (\beta_x)^4 \phi^4 |z^{\frac{1}{2}}|^2.
\end{align}
Let us focus now on the first term of the above expression. Using Lemma \ref{est_dt_square} with $\ell=4$ we have
\begin{align}\notag
&\left| \frac{1}{2} \dbint_{Q} \Dtbar(\theta^4)\tau^4\lambda^4(\beta_x)^4\taubp{z}^2 \right|  \leq C \dbint_{Q} \left\{T \taubp{\theta}^{4+{1}/{m}} + \frac{\dt}{\delta^{4m+2}T^{8m+2}} \right\} \tau^4\lambda^4(\beta_x)^4 \phi^4 \taubp{z}^2
\end{align}
whence, shifting the indices in the right-hand side, we get
\begin{align}\notag
&\left| \frac{1}{2} \dbint_{Q} \Dtbar(\theta^4)\tau^4\lambda^4(\beta_x)^4\taubp{z}^2 \right| \\ \notag
&\quad \leq C \dbint_{Q} \left\{T \taubm{\theta}^{4+{1}/{m}} + \frac{\dt}{\delta^{4m+2}T^{8m+2}} \right\} \tau^4\lambda^4(\beta_x)^4 \phi^4 \taubm{z}^2 \\ \notag
&\qquad + C \int_0^{1} \left\{T (\theta^{M+\frac12})^{4+{1}/{m}} + \frac{\dt}{\delta^{4m+2}T^{8m+2}} \right\} \tau^4\lambda^4(\beta_x)^4 \phi^4 (z^{M+\frac12})^2 \\ \label{eq:est_f_12_1_1}
&\qquad - C \int_0^{1} \left\{T (\theta^{\frac12})^{4+{1}/{m}} + \frac{\dt}{\delta^{4m+2}T^{8m+2}} \right\} \tau^4\lambda^4(\beta_x)^4 \phi^4 (z^{\frac12})^2.
\end{align}

Combining \eqref{eq:est_init_I21}--\eqref{eq:est_f_12_1_1} and dropping the corresponding positive terms, we can bound $I_{21}$ from below as follows
\begin{equation}\label{eq:I21_final}
I_{21}\geq - C \dbint_{Q} T \taubm{\theta}^{4+{1}/{m}}   \tau^4\lambda^4 (\beta_x)^4  \phi^4 \taubm{z}^2 -C X_{21} -C W_{21}
\end{equation}
for some $C>0$ uniform with respect to $\dt$, with 
\begin{align*}
X_{21}&:= \frac{\dt}{\delta^{4m+2}T^{8m+2}}\dbint_{Q} \tau^4\lambda^4 (\beta_x)^4  \phi^4 \taubm{z}^2,  \\
W_{21}& :=  \int_0^{1} \left\{(\theta^{M+\frac12})^4+T (\theta^{M+\frac12})^{4+{1}/{m}} + \frac{\dt}{\delta^{4m+2}T^{8m+2}} \right\} \tau^4\lambda^4 \phi^4 (z^{M+\frac12})^2.
\end{align*}

\textit{- Estimate of $I_{31}$}. Since $q\in (H_0^2(0,1))^{\ov{\sdmT}}$ and thus $z^{n-\frac{1}{2}}=\difx z^{n-\frac{1}{2}}=0$  for $n\in\inter{1,M+1}$, we can integrate by parts in the space variable twice and obtain
\begin{align*}
I_{31} &= -\dbint_{Q}\difx^4\taubm{z}\Dtbar{z}  = -\dbint_{Q}\difx^2\taubm{z}\Dtbar(\difx^2 z)  \\
&= - \frac{1}{2} \dbint_{Q}\Dtbar(\difx^2 z)+\frac{\dt}{2}\dbint_{Q}\left(\Dtbar\difx^2 z\right)^2,
\end{align*}
and using discrete integration by parts in the first term we get
\begin{align}\notag 
I_{31}&=\frac{1}{2}\int_{0}^{1}\left|\difx^2 z^{\frac{1}{2}}\right|^2-\frac{1}{2}\int_{0}^{1}\left|\difx^2 z^{M+\frac{1}{2}}\right|^2+\frac{\dt}{2}\dbint_{Q}\left(\Dtbar\,\difx^2 z\right)^2 \\
&\geq \frac{\dt}{2}\dbint_{Q}\left(\Dtbar\,\difx^2 z\right)^2 - W_{31} \label{eq:I31_final}
\end{align}
with $W_{31}:=\frac{1}{2}\int_{0}^{1}|\difx^2 z^{M+\frac{1}{2}}|^2$. 

\begin{rmk}
Unlike for the time-discrete heat equation (see \cite[Eq. (2.20)]{BHS20}), we keep the positive term in \eqref{eq:I31_final} since it will help to absorb the similar term coming from \eqref{eq:I11_final}. We emphasize that in the continuous case $I_{31}\equiv 0$.
\end{rmk}

\textit{- Estimate of $I_{41}$}. We readily have
\begin{equation} \label{eq:I41_final}
I_{41}=-6 \dbint_{Q} \tau^2 \taubm{\theta}^2 \lambda^2 \left( (\beta_x)^2\phi^2\right)_x \difx \taubm{z}\Dtbar z = -I_{11}^{(1)}.
\end{equation}

Recalling the notation $s(t)=\tau\theta(t)$, we can put together estimates \eqref{eq:I11_final}, \eqref{eq:I21_final}, \eqref{eq:I31_final}, \eqref{eq:I41_final} and fix $\vartheta>0$ small enough to obtain
\begin{align}\notag
\sum_{i=1}^{4} I_{i1} &\geq c \dt \dbint_{Q} \left(\Dtbar \difx^2z\right)^2  - C \dbint_{Q} T\taubm{\theta}^{1/m}\taubm{s}^{2} \lambda^2 \left(\beta_x\right)^2 \phi^2 |\difx\taubm{z}|^2 \\ \label{eq:sum_I_i1}
&\quad - C \dbint_{Q} T\taubm{\theta}^{1/m} \taubm{s}^{4}  \lambda^4 \left(\beta_x\right)^4  \phi^4 \taubm{z}^2 - C X - C Y - C W,
\end{align}
for some constants $c,C>0$ uniform with respect to $\dt$, with $X:= X_{11}+X_{21}$,  $Y: =  Y_{11}$, and  $W= W_{11}+ W_{21} + W_{31}$. 

\begin{rmk} Estimate \eqref{eq:sum_I_i1} resembles the one obtained in the continuous setting, in fact, the second and third terms the expected ones. All of the other terms are related to the time-discrete nature of our problem. The terms collected in $X$ and $Y$ depend directly on the parameter $\dt$, while the terms in $W$ appear since our Carleman weight does not blow up as $t \to 0^+$ and $t\to T^{-}$.
\end{rmk}

\subsection{Towards the Carleman estimate}

For a set $U\subset \Omega$, we define
\begin{align} \notag
I_{U}(z;\lambda)&= \dbint_{Q} \taubm{s}^{7} \lambda^8 \phi^7 \taubm{z}^2 + \dbint_{Q} \taubm{s}^{5} \lambda^6 \phi^5 |\difx \taubm{z}|^2 \\ \label{eq:notation_I}
&\quad + \dbint_{Q} \taubm{s}^{3} \lambda^4 \phi^3 |\difx^2 \taubm{z}|^2 + \dbint_{Q} \taubm{s} \lambda^2 \phi |\difx^3 \taubm{z}|^2.
\end{align}
As usual in other proofs for Carleman estimates, using the properties of the weight $\beta$ (see \cref{eq:cond_beta2}) and collecting the terms $I_{ij}$ from Sections \ref{sec:cont_terms} and \ref{sec:discr_terms}, we have that for $\lambda\geq C$ and $\tau\geq CT^{2m-1/3}$ the following estimate holds
\begin{equation*}
\sum_{i=1}^{4}\sum_{j=1}^{3} I_{ij} \geq C I _{\Omega\setminus \omega_0}(z;\tau,\lambda)= C I_{\Omega}(z;\tau,\lambda)-C I_{\omega_0}(z;\tau,\lambda),
\end{equation*}
for some $C>0$ only depending on $\omega$. So far, from \eqref{eq:carleman_eq}  and the above estimate, we have
\begin{align}\label{eq:car_inter_1}
I_{\Omega}(z;\tau,\lambda)+\norme{Az}_{L^{2}_{\smT(Q)}}^2+\norme{Bz}_{L^2_{\smT(Q)}} \leq C\left(\norme{g}^2_{L^2_{\smT(Q)}} + I_{\omega_0}(z;\tau,\lambda)+X+Y+Z\right)
\end{align}
for any $\lambda\geq C$ and $\tau\geq CT^{2m-1/3}$. 

We will add terms containing $\Dtbar z$ and $\difx^4\taubm{z}$ in the left-hand side of the above equation. From \eqref{eq:A}, we have
\begin{align*}\notag
&\|\taubm{s}^{-1/2}\phi^{-1/2}\difx^4\taubm{z}\|^2_{L^2_{\smT}(Q)} \\
&\quad \leq C\left( \|\taubm{s}^{-1/2} Az\|^2_{L^2(Q)}+  \|\taubm{s}^{3/2}\lambda^2\phi^{3/2}\difx^2\taubm{z}\|^2_{L^2_{\smT}(Q)}\right)\\
&\qquad + C \left( \|\taubm{s}^{7/2}\lambda^4\phi^{7/2}\taubm{z}\|^2_{L^2_{\smT}(Q)} + \|\taubm{s}^{5/2}\lambda^3\phi^{5/2}\difx\taubm{z}\|^2_{L^2_{\smT}(Q)}\right),
\end{align*}
where we have used that $\left|(\left(\beta_x)^2\phi^2\right)_x\right|\leq C\lambda\phi^2$ and have taken $\lambda\geq 1$ and $\tau\geq CT^{2m}$. Similarly,  from \eqref{eq:B} we get
\begin{align*}\notag
\|\taubm{s}^{-1/2}\phi^{-1/2}\Dtbar z\|_{L^2_{\smT}(Q)}^2 & \leq C\|\taubm{s}^{-1/2}\phi^{-1/2}B z\|_{L^2_{\smT}(Q)}^2 + C \|\taubm{s}^{5/2}\lambda^3\phi^{3/2}\difx\taubm{z}\|^2_{L^2_{\smT}(Q)} \\
&\quad + C \|\taubm{s}^{1/2}\lambda\phi^{1/2}\difx^3\taubm{z}\|^2_{L^2_{\smT}(Q)}.
\end{align*}

Thus, taking into account the above estimates and increasing (if necessary) the value of $\tau$ such that $s(t)\geq 1$, estimate \eqref{eq:car_inter_1} becomes
\begin{align}\notag
\dbint_{Q}&\taubm{s}^{-1}\left[|\Dtbar z|^2+|\difx^4\taubm{z}|^2\right]+I_{\Omega}(z;\tau,\lambda) \\ \label{eq:car_inter_2}
& \leq C\left(\norme{g}^2_{L^2_{\smT(Q)}} + I_{\omega_0}(z;\tau,\lambda)+X+Y+Z\right)
\end{align}
for all $\lambda\geq C$ and $\tau\geq C(T^{2m}+T^{2m-1/3})$.

\subsection{Absorbing discrete related terms}

In this step, we will absorb the remaining terms in the right-hand side of \eqref{eq:car_inter_2} by choosing the Carleman parameters in a specific order. We begin by choosing $\lambda_0\geq 1$ large enough (only depending on $\omega_0$ and $\omega$) and set $\lambda=\lambda_0$ for the rest of the proof. 

We begin with the following result.

\begin{lem} There exists $C_{\lambda_0}>0$ such that
\begin{align}\label{eq:est_g_total}
\norme{g}^2_{L^2_{\smT}(Q)} &\leq C_{\lambda_0} \left(\dbint_{Q} \taubm{r}^{2}|P_{\sdmT}q |^2 + \tau^{-1} I_{\Omega}(z;1)+ X_g+ Y_g+ W_g \right)
\end{align}
for all $\tau\geq \tau_0(T^{2m}+T^{2m-1/3})$ where $\tau_0$ is a positive constant only depending on $\lambda_0$ and 
\begin{align}
&X_g :=  \left(\frac{\dt \tau^2}{\delta^{2m+2}T^{4m+2}}\right)^2 \dbint_{Q}\taubm{z}^2, \quad Y_g := (\dt)^2 \tau^2 T^2 \dbint_{Q} \taubm{\theta}^{2(1+{1}/{m})}(\Dtbar z)^2, \\
& W_g:= \left(\frac{\dt \tau^2}{\delta^{2m+2}T^{4m+2}}\right)^2 \int_0^{1} |z^{M+\frac12}|^2  .
\end{align}
\end{lem}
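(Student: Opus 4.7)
The plan is to bound $\norme{g}^2_{L^2_{\smT}(Q)}$ by applying the elementary inequality $(a_1+\cdots+a_5)^2\leq 5\sum_{i=1}^5 a_i^2$ to the five summands appearing in \eqref{def:g_term} and then handling each piece separately. The first summand $\taubm{r}(P_{\sdmT}q)$ contributes exactly $\dbint_Q \taubm{r}^2|P_{\sdmT}q|^2$, which is the first term on the right-hand side of \eqref{eq:est_g_total}, so no work is needed there.

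For the remainder $\tbm(\mathcal{RT})$, I would exploit the explicit expansion of $\mathcal{RT}$ term by term. Each monomial there is of the form $s^a\lambda^b\phi^c\,\difx^d z$ with $(a,d)\in\{(1,0),(2,0),(3,0),(1,1),(1,2)\}$, strictly lower in the $s$-order than the dominant blocks $A_2z,A_1z,A_4z,A_3z$ that enter $I_\Omega(z;\lambda_0)$. After squaring and absorbing all $\phi$-powers into $C_{\lambda_0}$, each contribution matches a term of $I_\Omega(z;1)$ up to a factor of $s^{-1}=(\tau\theta)^{-1}$. Using $\theta\geq CT^{-2m}$ and the lower bound $\tau\geq \tau_0(T^{2m}+T^{2m-1/3})$, one verifies that $(\tau\theta)^{-1}$ is bounded, so this block is estimated by $C_{\lambda_0}\tau^{-1}I_{\Omega}(z;1)$.

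The third piece, $\tau\varphi\,\tbm(\theta'z)$, requires the discrete bound $|\theta'|\leq CT\theta^{1+1/m}$ from \Cref{lemma_deriv_theta}, giving
\[
\norme{\tau\varphi\,\tbm(\theta'z)}^2_{L^2_{\smT}(Q)}\leq C_{\lambda_0}\,\tau^2 T^2 \dbint_Q \tbm(\theta^{2+2/m})\,\taubm{z}^2.
\]
This is then compared with the leading term $\tau^{-1}\dbint_Q \tbm(s^7)\taubm{z}^2$ inside $\tau^{-1}I_\Omega(z;1)$: the comparison reduces to $T^2\leq C\tau^4\theta^{5-2/m}$, which is precisely why the hypotheses $m\geq 1/3$ and $\tau\geq \tau_0(T^{2m}+T^{2m-1/3})$ are invoked. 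The fourth piece, $\dt\tau\varphi\,\taubm{\theta'}\Dtbar z$, yields by the same estimate on $\theta'$ a direct upper bound by $C_{\lambda_0}Y_g$, with no further work needed.

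For the last term $\mathcal{O}_\lambda(1)\frac{\dt\tau^2}{\delta^{2m+2}T^{4m+2}}\taubp{z}$, squaring and integrating produce
\[
C_{\lambda_0}\left(\frac{\dt\tau^2}{\delta^{2m+2}T^{4m+2}}\right)^2\dbint_Q(\taubp z)^2.
\]
The telescoping identity $\dbint_Q(\taubp z)^2=\dbint_Q(\taubm z)^2+\dt\bigl[(z^{M+\frac12})^2-(z^{\frac12})^2\bigr]$ turns the bulk part into $X_g$ and the surviving boundary contribution at $t_{M+\frac12}$ into $W_g$ (after noting that $\dt\leq 1$). The main obstacle in this proof is the bookkeeping in the third step: carefully verifying that the condition on $\tau$ together with $m\geq 1/3$ suffices to dominate $\tau^2 T^2\theta^{2+2/m}$ by $\tau^6\theta^7$ uniformly in the admissible range of $\theta\in[CT^{-2m},C\delta^{-2m}T^{-2m}]$; the other pieces are essentially routine once the decomposition of $g$ is in place.
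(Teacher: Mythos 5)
Your decomposition of $g$ into its five summands and your treatment of each piece --- $\mathcal{RT}$ absorbed into $\tau^{-1}I_{\Omega}(z;1)$ via the boundedness of $\beta$ and its derivatives together with $\tau\geq CT^{2m}$, the $\theta'$ term via $|\theta'|\leq CT\theta^{1+1/m}$ and $\tau\geq CT^{2m-1/3}$, and the $\dt$-weighted terms producing $X_g$, $Y_g$, $W_g$ --- is exactly the route the paper takes, which it only sketches with ``successive applications of triangle and Young inequalities.'' Your write-up is in fact more explicit than the paper's, notably the telescoping identity converting $\dbint_Q(\taubp{z})^2$ into $\dbint_Q(\taubm{z})^2$ plus the boundary contribution at $t_{M+\frac12}$, which the paper dismisses as straightforward.
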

\begin{proof}
The proof follows by successive applications of triangle and Young inequalities. Looking at the definition of $g$, see eq. \eqref{def:g_term}, the first term of \eqref{eq:est_g_total} is obvious. The second term comes from estimating the terms in $\mathcal{RT}$ in \eqref{def:g_term} by using the fact that $\beta\in C^4([0,1])$ and $\tau \geq C T^{2m}$ for some $C>0$ only depending on $\lambda_0$. The third term in \eqref{def:g_term} can be incorporated on the second one in \eqref{eq:est_g_total} by noting that $|\theta^\prime(t)|\leq C T \theta(t)^{1+\frac{1}{m}}$ for all $t\in(0,T)$, $\|\varphi\|_{C(\ov{\Omega})}=\O_{\lambda_0}(1)$ and taking $\tau\geq C T^{2m-1/3}$ for some $C>0$ only depending on $\lambda_0$. The terms in $X_g$, $Y_g$ and $W_g$ can be obtained straightforwardly. 
\end{proof}

Using inequality \eqref{eq:est_g_total} in \eqref{eq:car_inter_2} and since $\|\phi\|_\infty=\O_{\lambda_0}(1)$, we have
\begin{align}\notag
\dbint_{Q}&\taubm{s}^{-1}\left[\left(\Dtbar z\right)^2+|\difx^4\taubm{z}|^2\right] + I_\Omega(z) \\ \label{eq:car_inter_2}
&\leq C_{\lambda_0} \left( \dbint_{Q} \taubm{r}^2|P_{\sdmT}q|^2  + I_{\omega_0}(z)+ \underline{X}+\underline{Y}+\underline{W}\right)
\end{align}
for any $\tau\geq \tau_0(T^{2m}+T^{2m-1/3})$, where $\tau_0$ is a positive constant only depending on $\lambda_0$. In \eqref{eq:car_inter_2}, for $U\subset \Omega$ we have abridged 
\begin{align*}
I_{U}(z)&= \dbint_{U} \taubm{s}  |\difx^3 \taubm{z}|^2  + \dbint_{U} \taubm{s}^{3} |\difx^2 \taubm{z}|^2 \\
&\quad + \dbint_{U} \taubm{s}^{5} |\difx \taubm{z}|^2 + \dbint_{U} \taubm{s}^{7} \taubm{z}^2
\end{align*}
and we have collected similar terms so
\begin{align*}
\underline{X}&:= \frac{ \dt\tau^2}{\delta^{2m+2}T^{4m+2}} \dbint_{Q}  |\difx\taubm{z}|^2 +  \frac{ \dt \tau^4}{\delta^{4m+2}T^{8m+2}}\dbint_{Q}  \taubm{z}^2 + \left(\frac{\dt \tau^2}{\delta^{2m+2}T^{4m+2}}\right)^2 \dbint_{Q}\taubm{z}^2, \\
\underline{Y}&:=  \dt \dbint_{Q} \taubm{s}^2  \left(\Dtbar z\right)^2 + \dt \dbint_{Q} \taubm{s}^4  (\Dtbar z)^2 + (\dt)^2 \tau^2 T^2 \dbint_{Q} \taubm{\theta}^{2(1+{1}/{m})}(\Dtbar z)^2, \\
\underline{W}&:=  \int_0^{1} \tau^2 (\theta^{\frac{1}{2}})^2 |\difx z^{\frac{1}{2}}|^2 +  \int_{0}^{1} \left\{T(\theta^{M+\frac12})^{2+{1}/{m}}+\frac{\dt}{\delta^{2m+2}T^{4m+2}}\right\} \tau^2 |\difx z^{M+\frac12}|^2 \\
&\quad + \int_0^{1} \left\{(\theta^{M+\frac12})^4+T (\theta^{M+\frac12})^{4+{1}/{m}} + \frac{\dt}{\delta^{4m+2}T^{8m+2}} \right\} \tau^4  (z^{M+\frac12})^2 \\
&\quad + \frac{1}{2}\int_{0}^{1}|\difx^2 z^{M+\frac{1}{2}}|^2 + \left(\frac{\dt \tau^2}{\delta^{2m+2}T^{4m+2}}\right)^2 \int_0^{1} |z^{M+\frac12}|^2. 
\end{align*}

As in \cite{BHS20}, using the parameter $\dt$, we will estimate the terms in $\underline{X}$ and $\underline Y$. The results reads as follows.
\begin{lem}\label{lem:residual}
For any $\tau \geq 1$, there exists $\epsilon=\epsilon(\lambda_0)$ such that for
\begin{equation}\label{eq:dt_smallness}
{0<\frac{\dt \tau^5}{\delta^{10m}T^{14m}}} \leq \epsilon
\end{equation}
the following estimate holds
\begin{equation}
\underline{X}+\underline{Y}\leq \epsilon \left(\dbint_{Q}\taubm{s}^7 \taubm{z}^2+\dbint_{Q}\taubm{s}^5|\difx\taubm{z}|^2+\dbint_{Q}\taubm{s}^{-1}(\Dtbar z)^2\right).
\end{equation}
\end{lem}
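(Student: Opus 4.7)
The plan is to bound each of the six summands making up $\underline X+\underline Y$ separately by a small multiple of one of the three absorbing terms on the right-hand side. The only tools needed are two pointwise estimates on the weight: the lower bound $\taubm\theta(t)\geq c\,T^{-2m}$ (coming from the fact that $\theta$ attains its minimum at $t=T/2$) and the upper bound $\taubm\theta(t)\leq C\,\delta^{-m}T^{-2m}$ (attained near the endpoints). These let me trade powers of $\theta$ for powers of $T$ and $\delta$, after which the smallness hypothesis \eqref{eq:dt_smallness} does all the work.

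First I would handle the three terms of $\underline X$. The spatial-gradient term $\frac{\dt\tau^2}{\delta^{2m+2}T^{4m+2}}\dbint_Q|\difx\taubm z|^2$ is to be absorbed by $\dbint_Q\taubm s^5|\difx\taubm z|^2\geq c\,\tau^5T^{-10m}\dbint_Q|\difx\taubm z|^2$; the ratio of the two prefactors is $\dt\,\tau^{-3}\delta^{-2m-2}T^{6m-2}$, and after inserting \eqref{eq:dt_smallness} it becomes at most $C\epsilon\,\delta^{8m-2}T^{20m-2}\tau^{-8}$, which is $\leq C\epsilon$ since $m\geq 1/3$ forces both $8m-2$ and $20m-2$ to be positive (so $\delta\leq 1/2$ and $T<1$ keep those factors bounded). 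The two $\dbint_Q \taubm z^2$ terms are bounded against $\dbint_Q\taubm s^7\taubm z^2\geq c\,\tau^7T^{-14m}\dbint_Q\taubm z^2$ by the exact same manipulation; in the quadratic-in-$\dt$ one, one simply applies \eqref{eq:dt_smallness} twice.

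The three terms of $\underline Y$ are treated analogously, each compared with $\dbint_Q\taubm s^{-1}(\Dtbar z)^2=\tau^{-1}\dbint_Q\taubm\theta^{-1}(\Dtbar z)^2$. For $\dt\dbint_Q\taubm s^k(\Dtbar z)^2$ with $k\in\{2,4\}$, I write $\taubm s^k=\tau^{k+1}\taubm\theta^{k+1}\cdot\tau^{-1}\taubm\theta^{-1}$ and bound the positive power of $\theta$ by $C\,\delta^{-m(k+1)}T^{-2m(k+1)}$; combining this with $\dt\,\tau^{k+1}$ and the smallness condition yields a bound of the form $C\epsilon\,\delta^{a}T^{b}\tau^{-c}$ with $a,b,c\geq 0$, hence $\leq C\epsilon$. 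The third term $(\dt)^2\tau^2T^2\dbint_Q\taubm\theta^{2(1+1/m)}(\Dtbar z)^2$ is handled the same way after writing $\taubm\theta^{2+2/m}=\taubm\theta^{-1}\cdot\taubm\theta^{3+2/m}$ and using $\taubm\theta^{3+2/m}\leq C\,\delta^{-(3m+2)}T^{-(6m+4)}$; \eqref{eq:dt_smallness} applied twice gives a bound of order $\epsilon^2\delta^{17m-2}T^{22m-2}\tau^{-7}\leq C\epsilon^2$.

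The only real obstacle is bookkeeping: one must verify that the single condition $\dt\tau^5\delta^{-10m}T^{-14m}\leq\epsilon$ is the most stringent among the six pointwise conditions that arise, so that it implies all of them. A direct check shows that it dominates the weaker requirements produced by each individual term (this is precisely why the authors selected these specific powers of $\tau$, $\delta$, and $T$ in \eqref{eq:small_final} from the preceding proof). Once this dominance is confirmed, one collects the six resulting estimates, absorbs the universal constants coming from the $\theta$-bounds into a redefinition $\epsilon\leftarrow\epsilon/C_{\lambda_0}$, and the claimed inequality follows.
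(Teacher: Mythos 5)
Your argument is correct and follows essentially the same route as the paper's proof: both rely on the two pointwise bounds $\theta\geq c\,T^{-2m}$ and $\theta\leq(\delta^{m}T^{2m})^{-1}$ to reduce each summand of $\underline{X}+\underline{Y}$ to a prefactor comparison, which the single smallness condition \eqref{eq:dt_smallness} then controls using $m\geq 1/3$, $\delta\leq 1/2$, $T<1$ and $\tau\geq 1$. The only (cosmetic) difference is that the paper first lumps the first two prefactors of $\underline{X}$ into $2\dt\tau^{4}\delta^{-10m}T^{-14m}$ and invokes $s(t)\geq 1$, whereas you carry out the power counting term by term against the quantitative lower bound $\tbm{s}^{k}\geq c\,\tau^{k}T^{-2mk}$.
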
 
\begin{proof}
 We increase the value of $\tau_0$ so that $\tau_0\geq 1$ and $\tau\geq 1$.  In fact, notice that 
 \begin{align}\label{eq:sgeq1}
 \tau_0\leq \tau_0\left(1+\frac{1}{T^{1/3}}\right) \leq \tau \theta = s(t), \quad \mbox{for all } t\in[0,T].
 \end{align} 
 By assumption $\delta\leq 1/2$ and $0<T<1$, thus using that $m\geq 1/3$, we have
$ \frac{\dt \tau^2}{\delta^{2m+2}T^{4m+2}}+ \frac{\dt \tau^4}{\delta^{4m+2}T^{8m+2}}\leq  \frac{2 \dt \tau^4}{\delta^{10m}T^{14m}}$. 
 Hence, provided $\frac{\dt \tau^4}{\delta^{10m}T^{14m}} \leq \epsilon_1$ with $\epsilon_1>0$ small enough, we readily see that
\begin{equation*}
\underline{X} \leq 4(\epsilon_1+\epsilon_1^2)\left(\dbint_{Q}\taubm{s}^5|\difx\taubm{z}|^2+\dbint_{Q}\taubm{s}^7\taubm{z}^7\right).
\end{equation*}
On the other hand, recalling that $\max_{t\in[0,T]}\theta(t)\leq \frac{1}{\delta^{m}T^{2m}}$, we have
\begin{align*}
\underline{Y} &\leq \left(\frac{\dt \tau^3}{\delta^{3m}T^{6m}}+\frac{\dt \tau^5}{\delta^{5m}T^{10m}}+\frac{(\dt)^2\tau^3}{\delta^{3m+2}T^{6m+2}}\right)\dbint_{Q}\taubm{s}^{-1}(\Dtbar z)^2 \\
& \leq (2\epsilon_2+\epsilon_2^2) \dbint_{Q}\taubm{s}^{-1}(\Dtbar z)^2,
\end{align*}
provided $\frac{\dt \tau^5}{\delta^{5m}T^{10m}}\leq \epsilon_2$ for $\epsilon_2>0$ small enough. To conclude, it is enough to combine the above smallness conditions into \eqref{eq:dt_smallness} for some $\epsilon>0$ small enough.
\end{proof}

Using \Cref{lem:residual} with $\epsilon=1/2C_{\lambda_0}$, where $C_{\lambda_0}>0$ is the constant appearing in \eqref{eq:car_inter_2}, we can absorb all the terms in $\underline{X}$ and $\underline{Y}$, thus obtaining
\begin{align}\label{eq:car_inter_3}
\dbint_{Q}&\taubm{s}^{-1}\left[\left(\Dtbar z\right)^2+|\difx^4\taubm{z}|^2\right] + I_\Omega(z) \leq C_{\lambda_0} \left( \dbint_{Q} \taubm{r}^2|P_{\sdmT}q|^2  + I_{\omega_0}(z)+\underline{W}\right).
\end{align}

As in other discrete-Carleman works, we cannot remove the terms in $\underline{W}$, but just estimate them. The following result gives a bound.
\begin{lem}\label{lem:est_w}
Under the hypothesis of \Cref{lem:residual}, there exists $C>0$ only depending on $m$ such that
\begin{equation}\label{est:w_under}
\underline{W}\leq C(1+\epsilon) (\dt)^{-1}\left(\int_0^{1}|z^{M+\frac12}|^2
+\int_{0}^{1}|\difx z^{\frac12}|^2+|\difx z^{M+\frac12}|^2+\int_0^{1}|\difx^2 z^{M+\frac12}|^2\right).
\end{equation}
\end{lem}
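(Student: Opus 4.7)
The plan is to verify \eqref{est:w_under} by inspecting the definition of $\underline W$ term by term. Each of its six summands is already a boundary integral of one of the quantities $|z^{M+\frac12}|^2$, $|\difx z^{\frac12}|^2$, $|\difx z^{M+\frac12}|^2$, or $|\difx^2 z^{M+\frac12}|^2$ that appear on the right-hand side of the claimed estimate, so it suffices to show that each prefactor $a_i$ multiplying one of these boundary integrals satisfies the pointwise bound $\dt\, a_i\leq C(1+\epsilon)$. Summing the six resulting inequalities will then deliver \eqref{est:w_under} with the factor $(\dt)^{-1}$ on the right.

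To carry out each of these six estimates, I would use two tools in sequence. First, the uniform upper bound $\max_{[0,T]}\theta\leq(\delta^m T^{2m})^{-1}$ to convert powers of $\theta^{M+\frac12}$ and $\theta^{\frac12}$ into negative powers of $\delta$ and $T$. Second, the smallness condition \eqref{eq:dt_smallness} in the rearranged form $\dt\leq \epsilon\,\delta^{10m}T^{14m}/\tau^5$, applied once or twice (depending on how many factors of $\dt$ the coefficient already contains) to absorb the leftover powers of $\tau$ and exchange them for further powers of $\delta$ and $T$. After this bookkeeping each summand takes the shape $\epsilon^{\ell}\delta^{\alpha}T^{\beta}\tau^{-\gamma}$ with $\gamma>0$, and the hypothesis $m\geq 1/3$, together with $\delta\leq 1/2$, $T\in(0,1)$, and $\tau\geq 1$, will guarantee $\alpha,\beta\geq 0$, so the whole expression is bounded by $\epsilon^{\ell}$.

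As an illustrative case, the coefficient $\tau^4(\theta^{M+\frac12})^4$ in front of $\int_0^1|z^{M+\frac12}|^2$ should be controlled by $\dt\,\tau^4(\theta^{M+\frac12})^4\leq \dt\tau^4/(\delta^{4m}T^{8m})\leq \epsilon\,\delta^{6m}T^{6m}/\tau\leq \epsilon$. Similar calculations for $T\tau^4(\theta^{M+\frac12})^{4+1/m}$ and $\tau^4\dt/(\delta^{4m+2}T^{8m+2})$ will produce residual exponents of the form $\delta^{6m-1}T^{6m-1}$ and $\delta^{16m-2}T^{20m-2}$, all of which are nonnegative exactly when $m\geq 1/3$. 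The prefactors of $|\difx z^{\frac12}|^2$, $|\difx z^{M+\frac12}|^2$ and the squared quantity $(\dt\tau^2\delta^{-2m-2}T^{-4m-2})^2$ will be handled by the same two-step pattern, while the coefficient $1/2$ in front of $|\difx^2 z^{M+\frac12}|^2$ is trivial because $\dt$ is bounded.

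The main obstacle I anticipate is purely combinatorial: one must keep track of roughly ten exponent inequalities and confirm that the single threshold $m\geq 1/3$ is simultaneously sufficient for all of them. No new analytic ingredient is needed beyond the two tools above; indeed this threshold is precisely what was built into the choice of powers in \Cref{lem:residual} and is what eventually fixes the optimal value $m=1/3$ mentioned after \Cref{thm:time_discrete_fourth}.
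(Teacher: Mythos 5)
Your proposal is correct and follows essentially the same route as the paper: bound the weight $\theta$ uniformly by a negative power of $\delta^m T^{2m}$, then use the smallness condition \eqref{eq:dt_smallness} once per factor of $\dt$ to absorb the remaining powers of $\tau$, $\delta$, $T$, with $m\geq 1/3$, $\delta\leq 1/2$, $T<1$, $\tau\geq 1$ guaranteeing that the residual exponents are nonnegative; your sample computations check out. One small point to make explicit: $\theta^{M+\frac12}$ is evaluated at $t_{M+\frac12}=T+\dt/2\notin[0,T]$, so the bound $\max_{[0,T]}\theta\leq(\delta^mT^{2m})^{-1}$ does not apply verbatim; you first need to observe (as the paper does) that \eqref{eq:dt_smallness} forces $\dt\leq(\delta T)^m/2^m$, whence $\max_{[0,T+\dt]}\theta\leq 2^m\delta^{-m}T^{-2m}$, which only changes the constant into a $C$ depending on $m$ as claimed.
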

\begin{proof}
Under the hypothesis of the lemma and recalling that $\delta\leq 1/2$, we have $\dt\leq (\delta T)^m/2^{m}$ and hence 
\begin{equation}\label{eq:cond_theta_plus}
\max_{t\in[0,T+\dt]} \theta(t)\leq \frac{2^{m}}{\delta^m T^{2m}}.
\end{equation} 
Inequality \eqref{est:w_under} follows from this estimate and direct computations. 
\end{proof}

Using estimate \eqref{est:w_under} in \eqref{eq:car_inter_3}, we obtain
\begin{align}\notag
\dbint_{Q}&\taubm{s}^{-1}\left[\left(\Dtbar z\right)^2+|\difx^4\taubm{z}|^2\right] + I_\Omega(z) \leq C_{\lambda_0} \left( \dbint_{Q} \taubm{r}^2|P_{\sdmT}q|^2  + I_{\omega_0}(z)\right) \\ \label{eq:car_inter_4}
&+ C_{\lambda_0}(\dt)^{-1}\left(\int_0^{1}|z^{M+\frac12}|^2
+\int_{0}^{1}|\difx z^{\frac12}|^2+|\difx z^{M+\frac12}|^2+\int_0^{1}|\difx^2 z^{M+\frac12}|^2\right),
\end{align}
for $\tau \geq \tau_0(T^{2m}+T^{2m-1/3})$ and $\dt \tau^{5} (\delta^{10m}T^{14m})^{-1}\leq \epsilon_0$. 

\subsection{Conclusion}

Once we have proved \eqref{eq:car_inter_4}, the conclusion follows from standard arguments, so we present them briefly. 

To have a Carleman estimate with only observation in $L^2$, let $\omega_1$ be an open set such that $\omega_0\Subset\omega_1\Subset \omega$ and a cut-off function $\rho\in C_c^\infty(\omega_1)$ such that $\rho=1$ in $\omega_0$. Then, integrating by parts
\begin{align*}\notag 
\dbint_{\omega_0\times(0,T)} \taubm{s} |\difx^3 \taubm{z}|^2 &\leq \dbint_{\omega_1\times(0,T)} \taubm{s} \rho |\difx^3 \taubm{z}|^2 \\ \notag
&= -\dbint_{\omega_1\times(0,T)} \taubm{s}\difx^4 \taubm{z} \difx^2\taubm{z}+\frac{1}{2}\dbint_{\omega_1\times(0,T)} \taubm{s}\rho_{xx} |\difx^2\taubm{z}|^2 \\
& \leq \gamma \dbint_{Q} \taubm{s}^{-1}|\difx^{4}\taubm{z}|^2 + C_{\gamma} \dbint_{\omega_1\times(0,T)} \taubm{s}^{3} |\difx^2\taubm{z}|^2
\end{align*}
for any $\gamma>0$. Here, we have also used that $s(t)\geq 1$ (see eq. \eqref{eq:sgeq1}) to adjust the powers of $s$ in the last term. By an analogous procedure, if $\omega_2$ is an open set such that $\omega_1\Subset \omega_2\Subset \omega$, then it is not difficult to see that
\begin{equation*}
\dbint_{\omega_1\times(0,T)} \taubm{s}^3 |\difx^2\taubm{z}|^2 \leq \gamma \dbint_{Q}\taubm{s} |\difx^3\taubm{z}|^2+ C_{\gamma} \dbint_{\omega_2\times(0,T)} \taubm{s}^5|\difx\taubm{z}|^2
\end{equation*}
and
\begin{equation*}
\dbint_{\omega_2\times(0,T)} \taubm{s}^5 |\difx\taubm{z}|^2 \leq \gamma \dbint_{Q}\taubm{s}^3 |\difx^2\taubm{z}|^2+ C_{\gamma} \dbint_{\omega \times(0,T)} \taubm{s}^7\taubm{z}^2.
\end{equation*}

Combining the above inequalities, putting them into \eqref{eq:car_inter_4} and taking $\gamma>0$ small enough yields
\begin{align}\notag
\dbint_{Q}&\taubm{s}^{-1}\left[\left(\Dtbar z\right)^2+|\difx^4\taubm{z}|^2\right] + I_\Omega(z) \leq C_{\lambda_0}  \dbint_{Q} \taubm{r}^2|P_{\sdmT}q|^2  \\ \label{eq:car_inter_5}
&+ C_{\lambda_0}(\dt)^{-1}\left(\int_0^{1}|z^{M+\frac12}|^2
+\int_{0}^{1}|\difx z^{\frac12}|^2+|\difx z^{M+\frac12}|^2+\int_0^{1}|\difx^2 z^{M+\frac12}|^2\right),
\end{align}
for $\tau \geq \tau_0(T^{2m}+T^{2m-1/3})$ and $\dt \tau^{5} (\delta^{10m}T^{14m})^{-1}\leq \epsilon_0$. 

Finally, we shall come back to the original variable. Recalling identity \eqref{eq:change_var}, a straightforward computation gives
\begin{align}\notag 
\frac{1}{C} r^2 & \left(s^7 |q|^2+s^5|\difx q|^2+s^3|\difx^2q|^2+s|\difx^3 q|^2 + s^{-1}|\difx^4 q|^2\right)  \\
& \leq s^7 |z|^2+s^5|\difx z|^2+s^3|\difx^2z|^2+s|\difx^3 z|^2 + s^{-1}|\difx^4 z|^2,
\end{align}
for some $C>0$ only depending on $\Omega$ and $\omega$. This estimate help us to comeback to the original variable in the terms $I_{\Omega}(z)$ of \eqref{eq:car_inter_5} and the one containing $\difx^4$. To add the term corresponding to $\Dtbar q$, we just have to notice that $-\Dtbar q= P_{\sdmT} q - \difx^4\taubm{q}$, hence
\begin{equation}
\dbint_{Q} \tbm(r^2 s^{-1}) |\Dtbar q|^2 \leq 2 \dbint_{Q} \tbm(r^2s^{-1})|P_{\sdmT}q|^2 + 2 \dbint_{Q} \tbm(r^2s^{-1})|\difx^4\taubm{q}|^2.
\end{equation}

Lastly, the terms evaluated at $n=\frac{1}{2}$ and $n={M}+\frac12$ can be estimated by noting that $\difx z= \difx r q+ r\difx q$ and $\difx^2 z= r\difx^2 q+2\difx r\difx q + \difx^2 r q$ and arguing similar to the proof of \Cref{lem:est_w}. We skip the details. This ends the proof.

\section{An auxiliary lemma}

\begin{lem}\label{lem:est_method} Let $\dt>0$ such that $2C\dt<1$ for some constant $C>0$ only depending on $\Omega$, $\Gamma$, $\gamma$, $a$, and $c$. Then, we have
\begin{align} \label{eq:est_normal}
\bullet \;\; &\snorme{p^{n-\frac12}}^{2}_{L^2(\Omega)}+\snorme{q^{n-\frac12}}_{L^2(\Omega)}^2 \leq e^{2C\dt} \left(\snorme{p^{n+\frac12}}^{2}_{L^2(\Omega)}+\snorme{q^{n+\frac12}}_{L^2(\Omega)}^2\right), \\ \notag
\bullet \;\; &\snorme{\difx p^{n-\frac12}}^{2}_{L^2(\Omega)}+\snorme{\difx^2 q^{n-\frac12}}_{L^2(\Omega)}^2 \\ \label{eq:est_regular}
&\quad \leq e^{2C\dt} \left(\snorme{\difx p^{n+\frac12}}^{2}_{L^2(\Omega)}+\snorme{\difx^2 q^{n+\frac12}}_{L^2(\Omega)}^2\right)+C\dt\, e^{2C\dt} \left(\snorme{p^{n+\frac12}}^{2}_{L^2(\Omega)}+\snorme{q^{n+\frac12}}_{L^2(\Omega)}^2\right),
\end{align}
 for all $n\in\inter{1,M}$. 

\end{lem}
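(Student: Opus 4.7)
Since the lemma is a quantitative a priori stability estimate for the backward implicit scheme \eqref{eq:adj_ks_heat_compact}, the plan is to test each equation against a well-chosen multiplier, integrate by parts in space, and exploit the algebraic identity
\[
(u^{n-\frac12}-u^{n+\frac12},u^{n-\frac12})_{L^2(\Omega)}=\tfrac{1}{2}\bigl(\snorme{u^{n-\frac12}}_{L^2(\Omega)}^2-\snorme{u^{n+\frac12}}_{L^2(\Omega)}^2+\snorme{u^{n-\frac12}-u^{n+\frac12}}_{L^2(\Omega)}^2\bigr),
\]
which brings out a telescoping difference in the time-discrete derivative. A discrete Grönwall-type inversion $\frac{1}{1-C\dt}\leq e^{2C\dt}$, valid under $2C\dt<1$ up to an innocuous update of $C$, will convert the remaining additive couplings into the stated exponential factor.

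\paragraph{Proof of \eqref{eq:est_normal}.} I would test the first equation of \eqref{eq:adj_ks_heat_compact} by $p^{n-\frac12}$ and the second by $q^{n-\frac12}$ in $L^2(\Omega)$. The Dirichlet condition on $p$ makes $-c(\difx p^{n-\frac12},p^{n-\frac12})$ vanish, while $-\Gamma(\difx^2 p^{n-\frac12},p^{n-\frac12})=\Gamma\snorme{\difx p^{n-\frac12}}^2_{L^2(\Omega)}\geq 0$ can be discarded. For the fourth-order equation, the Dirichlet-Neumann conditions on $q$ give $\gamma(\difx^4 q^{n-\frac12},q^{n-\frac12})=\gamma\snorme{\difx^2 q^{n-\frac12}}^2_{L^2(\Omega)}$, $(\difx^3 q^{n-\frac12},q^{n-\frac12})=0$, and $a(\difx^2 q^{n-\frac12},q^{n-\frac12})=-a\snorme{\difx q^{n-\frac12}}^2_{L^2(\Omega)}$; the last, sign-indefinite term is absorbed into the $\gamma\snorme{\difx^2 q^{n-\frac12}}^2_{L^2(\Omega)}$ reservoir via the Poincaré-type interpolation $\snorme{\difx q}^2_{L^2(\Omega)}\leq \epsilon\snorme{\difx^2 q}^2_{L^2(\Omega)}+C_\epsilon\snorme{q}^2_{L^2(\Omega)}$. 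The $L^2$-couplings on the right-hand side are bounded by Cauchy-Schwarz and Young. Summing the two resulting inequalities and dropping the remaining nonnegative dissipation yields $(1-C\dt)\bigl(\snorme{p^{n-\frac12}}^2_{L^2(\Omega)}+\snorme{q^{n-\frac12}}^2_{L^2(\Omega)}\bigr)\leq \snorme{p^{n+\frac12}}^2_{L^2(\Omega)}+\snorme{q^{n+\frac12}}^2_{L^2(\Omega)}$, whence \eqref{eq:est_normal}.

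\paragraph{Proof of \eqref{eq:est_regular}.} I would repeat the procedure, now testing the first equation by $-\difx^2 p^{n-\frac12}$ and the second by $\difx^4 q^{n-\frac12}$. Integration by parts in space in the discrete-time terms yields $\frac{1}{2}\bigl(\snorme{\difx p^{n-\frac12}}^2_{L^2(\Omega)}-\snorme{\difx p^{n+\frac12}}^2_{L^2(\Omega)}\bigr)$ and $\frac{1}{2}\bigl(\snorme{\difx^2 q^{n-\frac12}}^2_{L^2(\Omega)}-\snorme{\difx^2 q^{n+\frac12}}^2_{L^2(\Omega)}\bigr)$, and the principal dissipative terms $\Gamma\snorme{\difx^2 p^{n-\frac12}}^2_{L^2(\Omega)}$ and $\gamma\snorme{\difx^4 q^{n-\frac12}}^2_{L^2(\Omega)}$ serve as reservoirs. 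The couplings $-(q^{n-\frac12},\difx^2 p^{n-\frac12})$ and $(p^{n-\frac12},\difx^4 q^{n-\frac12})$ are handled by Young \emph{directly}, without further integration by parts (to avoid uncontrolled boundary contributions), producing $\snorme{q^{n-\frac12}}^2_{L^2(\Omega)}$ and $\snorme{p^{n-\frac12}}^2_{L^2(\Omega)}$ on the right. These low-order norms are then replaced by $\snorme{p^{n+\frac12}}^2_{L^2(\Omega)}+\snorme{q^{n+\frac12}}^2_{L^2(\Omega)}$ via the already-established \eqref{eq:est_normal}, which accounts for the additive factor $C\dt\,e^{2C\dt}\bigl(\snorme{p^{n+\frac12}}^2_{L^2(\Omega)}+\snorme{q^{n+\frac12}}^2_{L^2(\Omega)}\bigr)$ present in \eqref{eq:est_regular}. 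A final use of $\frac{1}{1-C\dt}\leq e^{2C\dt}$ closes the estimate.

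\paragraph{Main obstacle.} The only subtle point is the treatment of the non-self-adjoint lower-order terms $-\dt(\difx^3 q^{n-\frac12},\difx^4 q^{n-\frac12})$, $\dt\, a(\difx^2 q^{n-\frac12},\difx^4 q^{n-\frac12})$, and $\dt\, c(\difx p^{n-\frac12},\difx^2 p^{n-\frac12})$ appearing in the second energy identity: none of them admits a clean integration by parts because we have no information on $\difx p$, $\difx^2 q$ or $\difx^3 q$ at the boundary. The remedy is to apply Young so that the top-order factor can be absorbed (with a small weight) into the $\Gamma\snorme{\difx^2 p}^2_{L^2(\Omega)}$ and $\gamma\snorme{\difx^4 q}^2_{L^2(\Omega)}$ dissipation, and to control the residual $\snorme{\difx^3 q}^2_{L^2(\Omega)}$ through a Gagliardo--Nirenberg interpolation $\snorme{\difx^3 q}^2_{L^2(\Omega)}\leq \epsilon\snorme{\difx^4 q}^2_{L^2(\Omega)}+C_\epsilon\snorme{\difx^2 q}^2_{L^2(\Omega)}$ (valid for $q\in H_0^2(\Omega)\cap H^4(\Omega)$). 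Tuning the various Young weights so that the top-order dissipation remains strictly positive is quantitative but routine.
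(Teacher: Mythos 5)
Your proposal is correct and follows essentially the same route as the paper: testing with $p^{n-\frac12}$, $q^{n-\frac12}$ and then with $-\difx^2 p^{n-\frac12}$, $\difx^4 q^{n-\frac12}$, using the identity $(a-b)a=\tfrac12 a^2-\tfrac12 b^2+\tfrac12(a-b)^2$, Ehrling/interpolation to absorb the lower-order and cross terms into the dissipation, estimate \eqref{eq:est_normal} to convert the residual $L^2$ norms, and the inversion $1/(1-C\dt)\leq e^{2C\dt}$. The only (harmless) differences are cosmetic: you observe that some convective terms vanish exactly under the boundary conditions where the paper simply estimates them, and you interpolate $\snorme{\difx^3 q}^2_{L^2(\Omega)}$ against $\snorme{\difx^2 q}^2_{L^2(\Omega)}$ rather than $\snorme{q}^2_{L^2(\Omega)}$.
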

\begin{proof}
Let us fix $n\in\inter{1,M}$. Recalling the identity $(a-b)a=\frac12a^2-\frac12b^2+\frac12(a-b)^2$, multiplying by $p^{n-\frac12}$ in the first equation of \eqref{eq:adj_ks_heat} and integrating by parts, we have that
\begin{align*}
\snorme{p^{n-\frac12}}^2_{L^2(\Omega)}-\snorme{p^{n+\frac12}}^2_{L^2(\Omega)}+&\snorme{p^{n-\frac12}-p^{n+\frac12}}^2_{L^2(\Omega)}+2\Gamma \dt\int_\Omega |\difx p|^2 \\
&= 2 \dt \int_{\Omega}q^{n-\frac12}p^{n-\frac12}+ 2c \dt \int_{\Omega} \difx p^{n-\frac12}p^{n-\frac12}
\end{align*}
whence
\begin{align}\label{eq:est_p_n}
\snorme{p^{n-\frac12}}^2_{L^2(\Omega)}+\Gamma \dt \int_{\Omega}|\difx p^{n-\frac12}|^2 \leq \|p^{n+\frac12}\|^2_{L^2(\Omega)} + C\dt \left(  \|q^{n-\frac12}\|^2_{L^2(\Omega)} + \|p^{n-\frac12}\|^2_{L^2(\Omega)} \right).
\end{align}

Similarly, multiplying by $q^{n-\frac12}$ in the second equation of \eqref{eq:adj_ks_heat} and integrating by parts yields
\begin{align*}
\snorme{q^{n-\frac12}}^2_{L^2(\Omega)}&-\snorme{q^{n+\frac12}}^2_{L^2(\Omega)}+\snorme{q^{n-\frac12}-q^{n+\frac12}}^2_{L^2(\Omega)}+2\gamma \dt\int_{\Omega}|\difx^2q^{n-\frac12}|^2 \\
&= 2\dt \int_{\Omega} \difx^2 q^{n-\frac12}\difx q^{n-\frac12}-2a\dt \int_{\Omega}\difx^2 q^{n-\frac12} q^{n-\frac12}+2\dt\int_{\Omega}p^{n-\frac12}q^{n-\frac12}.
\end{align*}
Thus, using that $\int_{\Omega}|u_x|^2\leq \epsilon \int_{\Omega}|u_{xx}|^2+C_\epsilon \int_{\Omega} |u|^2$ for all $\epsilon>0$ (which follows from Ehrling's lemma), together with Cauchy-Schwarz and Young inequalities, we get
\begin{equation}\label{eq:est_q_n}
\|q^{n-\frac12}\|^2_{L^2(\Omega)}+\gamma \dt \int_{\Omega}|\difx^2 q^{n-\frac12}|^2 \leq \|q^{n+\frac12}\|^2_{L^2(\Omega)}+C\dt \left( \|q^{n-\frac12}\|^2_{L^2(\Omega)}+\|p^{n-\frac12}\|^2_{L^2(\Omega)}\right)
\end{equation}
for some $C>$ only depending on $\Omega$, $\gamma$ and $a$.

Now, define $E^{n-\frac12}:= \snorme{p^{n-\frac12}}^2_{L^2(\Omega)}+\snorme{q^{n-\frac12}}^2_{L^2(\Omega)}$. Then adding estimates \eqref{eq:est_p_n} and \eqref{eq:est_q_n}, we have $(1-\dt C)E^{n-\frac12} \leq E^{n+\frac12}$ for some $C>0$ only  depending on $\Omega$, $\Gamma$, $\gamma$, $a$, and $c$. Using the inequality $e^{2x}>1/(1-x)$ for $0<x<1/2$ yields 
$E^{n-\frac12}\leq e^{2C\dt}E^{n+\frac12}$, $n\in\inter{1,M}$, which is exactly \eqref{eq:est_normal}.

To prove \eqref{eq:est_regular}, we multiply by $\difx p^{n-\frac12}$ in the first equation of \eqref{eq:adj_ks_heat} and arguing as before, we have
\begin{equation}\label{eq:est_reg_p}
\snorme{\difx p^{n-\frac12}}^2_{L^2(\Omega)}+\Gamma \dt \int_{\Omega}|\difx^2 p^{n-\frac12}|^2 \leq \snorme{\difx p^{n+\frac12}}_{L^2(\Omega)}^2 + C \dt\left(\snorme{\difx p^{n-\frac12}}^2_{L^2(Q)}+\snorme{q^{n-\frac12}}^2_{L^2(Q)}\right).
\end{equation}
In the same way, multiplying the second equation of \eqref{eq:adj_ks_heat} by $\difx^4 q^{n-\frac12}$ and using that $\int_{\Omega}|u_{xxx}|^2\leq \epsilon \int_{\Omega}|u_{xxxx}|^2+C_\epsilon \int_{\Omega} |u|^2$ for all $\epsilon>0$, we get 
\begin{align}\notag
&\snorme{\difx^2 q^{n-\frac12}}^2_{L^2(\Omega)}+\gamma \dt \int_{\Omega}|\difx^4q^{n-\frac12}|^2 \\ \label{eq:est_reg_q}
&\quad \leq \snorme{\difx^2 q^{n+\frac12}}^2_{L^2(\Omega)} + C\dt \left(\snorme{\difx^2 q^{n-\frac12}}_{L^2(\Omega)}^2+\snorme{q^{n-\frac12}}_{L^2(\Omega)}^2+\snorme{p^{n-\frac12}}^2_{L^2(\Omega)}\right).
\end{align}
Estimate \eqref{eq:est_regular} follows from \eqref{eq:est_normal}, \eqref{eq:est_reg_p}, and \eqref{eq:est_reg_q}. This ends the proof.
\end{proof}

\renewcommand{\abstractname}{Acknowledgements}
\begin{abstract}
\end{abstract}
\vspace{-0.5cm}
The author would like to thank Prof. Alberto Mercado (Universidad T\'ecnica Federico Santa Mar\'ia) for some clarifying discussions about the SKS system. 

This work has received support from the program ``Estancias posdoctorales por M\'exico'' of CONACyT, Mexico.

\bibliographystyle{alpha}
{\small
\bibliography{bib_disc_KS}
}

\end{document}